\newcommand{\bd}{\begin{description}}
\newcommand{\ed}{\end{description}}
\newcommand{\bi}{\begin{itemize}}
\newcommand{\ei}{\end{itemize}}
\newcommand{\be}{\begin{enumerate}}
\newcommand{\ee}{\end{enumerate}}
\newcommand{\beq}{\begin{equation}}
\newcommand{\eeq}{\end{equation}}
\newcommand{\beqs}{\begin{eqnarray*}}
\newcommand{\eeqs}{\end{eqnarray*}}
\definecolor{DarkGreen}{rgb}{0.2, 0.6, 0.3}
\newcommand{\sat}{{\rm sat}}
\newcommand{\Sat}{{\rm Sat}}
\newcommand{\diam}{{\rm diam}}
\newtheorem{theorem}{Theorem}[section]
\newtheorem{lemma}[theorem]{Lemma}
\newtheorem{claim}{Claim}
\newtheorem{proposition}[theorem]{Proposition}
\newtheorem{problem}{Problem}
\begin{document}
\title{\textbf{Some results on minimum saturated graphs} %\footnote{Supported by the NationalScience Foundation of China (Nos. 11601254, and 11551001).}
 }

\author{ Chenke Zhang\footnote{School of Mathematics and Statistics, Shandong University of Technology,
			Zibo 255000, China. {\tt
		zhangchenke2024@163.com}}, \ \
        Qing Cui\footnote{School of Mathematics
Nanjing University of Aeronautics and Astronautics,
Nanjing 210016,  China. {\tt cui@nuaa.edu.cn}},\ \  
	Jinze Hu\footnote{Center for Discrete Mathematics, Fuzhou University, Fuzhou, Fujian 350116, China.{\tt hujinzeh@163.com}},\ \  Erfei Yue\footnote{HUN-REN Alfr\'{e}d R\'{e}nyi Institute of Mathematics \& E\"{o}tv\"{o}s Lor\'{a}nd University, Budapest, Hungary. {\tt yef9262@mail.bnu.edu.cn}},\ \  Shengjin Ji \  \footnote{School of Mathematics and Statistics, Shandong University of Technology,
			Zibo 255000, China. {\tt jishengjin@sdut.edu.cn}}
    \footnote{Corresponding author: Shengjin Ji}
			}

\date{}
\maketitle
%\vspace{-5mm}
%\begin{center}
%Dedicated to Professor Eddie Cheng on the occasion of his 60th birthday.
%\end{center}
%\vspace{-3pt}
%\begin{center}
%	\small  School of Mathematics and statistics, Shandong University of Technology,
%	\\ Zibo, Shandong 255049, China.
%\end{center}
%\vspace{1mm}

\begin{abstract}
Let $G$ be a graph and $\mathcal{F}$ be a family of graphs. We say a graph $G$ is $\mathcal{F}$-saturated if $G$ does not contain any member in $\mathcal{F}$  and for any $e\in E(\overline{G})$, $G+e$ creates a copy of some member in $ \mathcal{F}$. The saturation number of $\mathcal{F}$  is the minimum number of edges of an $\mathcal{F}$-saturated  graphs with $n$ vertices, denoted by $\sat(n,\mathcal{F})$. If $\mathcal{F}=\{F\}$, then we write it as $\sat(n,F)$ for short.  In this paper, we determine the exact value of $\sat(n,\{K_3,P_k\})$, and as its application, we obtain two bounds of $\sat(n,K_3\cup P_k)$ for $k\ge 10$ and  sufficiently large $n$. Furthermore, $\sat(n,K_1\lor F)$ is determined, where $F$ is a linear forest without isolated vertices.
\\[2mm]
{\bf Keywords:} Saturation number; clique;  path; linear forest; the  join of graphs.\\[2mm]
{\bf AMS subject classification 2010:} 05C75; 05C35.
\end{abstract}

\section{Introduction}

For a given family $\mathcal{F}$ of graphs, we say a graph $G$ is $\mathcal{F}$-free, if $G$ does not contain any member in  $\mathcal{F}$. We say a graph $G$ is $\mathcal{F}$-saturated if $G$ is $\mathcal{F}$-free, and for any $e\in E(\overline{G})$, $G+e$ creates a copy of some member in $\mathcal{F}$. The saturation number of $\mathcal{F}$ is defined as $\sat(n,\mathcal{F})=\min\{e(G): G$ is  $\mathcal{F}$-saturated  and  $|G|=n\}$ and the extremal graphs of $\mathcal{F}$ are belonging to  $\Sat(n,\mathcal{F})=\{G: G$ is $\mathcal{F}$-saturated with $e(G)=\sat(n,\mathcal{F})\}$.  We substitute  $\mathcal{F}$-free,  $\mathcal{F}$-saturated,  $\sat(n,\mathcal{F})$ and $\Sat(n,\mathcal{F})$ 
 with $F$-free, $F$-saturated, $\sat(n,F)$ and $\Sat(n,F)$, respectively if $\mathcal{F}=\{F\}$.

Saturation number was first introduced by Erd\H{o}s et al. \cite{EHM64} who showed that $\sat(n,K_p )=(p-2)(n-p+2)+\binom{p-2}{2}$ and $K_{p-2}\vee \overline{K}_{n-p+2}$ is the unique minimum $K_p$-saturated graphs with $n$ vertices.  K\'{a}szonyi and Tuza \cite{KT86} determined the saturation numbers of a star, a path and an $m$-matching. Furthermore, they proved that saturation number is bounded by a linear function of $n$. For cycles, we refer  to \cite{ C11,LSWZ21,Y25,MHHG21,MPT25,T89}. For a disjoint union of cliques, Faudree et al. \cite{FFGJ09} determined $\sat(n,tK_p)$, $\sat(n,K_p\cup K_q)$ and $\sat(n,F_{t,p,l})$. Chen and Yuan \cite{CY23} determined the saturation number for $K_p\cup(t-1)K_q$, and the extremal graph for $K_p\cup2K_q$($2 \le p<q$). Moreover, the saturation number and extremal graph for $K_p\cup K_q\cup K_r$ ($r \ge p+q$) are completely determined. Later, Zhu et al. \cite{ZHH24} resolved a conjecture in \cite{CY23} by determining $
\Sat(n,K_p\cup(t-1)K_q)$ for every $2\le p \le q$ and $t\ge 2$. %Li et al. \cite{LHHZ25} showed the saturation number for unions of four cliques.
For a linear forest $F$, Chen et al. \cite{CFFGJM17} investigated the saturation numbers for forests and provided the upper and lower bounds on $\sat(n,H)$ with $H\in \{F,tP_k,P_k\cup P_l\}$. Furthermore, they obtained the exact values of $\sat(n,P_m\cup tP_2)$ with $m\in \{3,4\}$. 
So far, for $m\in \{5,6,7\}$,
$\sat(n,P_m\cup tP_2)$ are also determined, see \cite{FW15,Y23,ZHHZ24}. In addition, two results on $\sat(n, tP_3)$ are presented in \cite{CLLYZ23,HLL23}. For more other saturated results, we refer to a survey  \cite{CFF21}.

%Fan and Wang \cite{FW15} proved that $\sat(n,P_5\cup tP_2)=\smin \left\{\lceil \frac{5n-4}{6} \rceil ,3t+12\right\}$ for $n\ge 3t+8$ with the extremal graphs $K_6\cup (t-1) K_3\cup \overline{K_{n-3t-3}}$ for $n>(8t+76)/5$. Moreover, Yan \cite{Y23} showed that $\sat(n,P_6\cup tP_2)=\smin\left\{n-\lfloor \frac{n}{10} \rfloor, 3t+18\right\}$ with the extremal graphs $K_7\cup \left( t-1 \right) K_3\cup \overline{K_{n-3t-4}}$ for $n>10t/3+20$.  K\'{a}szonyi and Tuza \cite{KT86} gave the upper bound of $sat(n,F)$ for an arbitrary $F$. Cameron and Puleo \cite{CP22} gave a lower bound of the saturation number for an arbitrary graph. For more results on the saturation number, we can follow a survey \cite{CFF21}.

  Recently, the saturation number of the disjoint union of a clique and a path has been studied. %Hu et al. \cite{HJZ24} showed the exact value of $\sat(n,K_p\cup P_3)$.
  Li and Xu \cite{LX23} studied connected $K_3\cup P_k$-saturated graphs for $k\ge 4$ and posed a problem whether the size of the minimum  connected $K_3\cup P_k$-saturated graphs equals to $n+2$. Hu et al. \cite{HJ25} gave a positive answer of the problem under the condition sufficiently large $n$ and $k\ge 4$,  furthermore, they gave an upper bound of $\sat(n,K_3\cup P_k)$ for integer $k\ge 6$.
 % $\sat(n,P_k\cup K_3)\le n-\lfloor \frac{n-n_{0}^{'}}{a_{k}^{'}} \rfloor +3( \frac{k}{2}-1 ) $, if $k$ is even, and $\sat(n,P_k\cup K_3)\le n-\lfloor \frac{n-n_{0}^{'}}{a_{k}^{'}} \rfloor +3( \frac{k}{2}-1) +1$, if $k$ is odd.} 
  
  In this paper, we are interested in the saturation number of $K_3\cup P_k$. In fact, We will research $\sat(n,K_3\cup P_k)$  through establishing its relationship with $\sat( n,\{K_3,P_k\})$.  We first obtain the following result, where $a_k^1$ is defined in next section.
\begin{theorem}\label{saturationnumber2}
If $n\ge a^1_k$ and $k\ge 10$, then $\sat( n,\{K_3,P_k\}) =n-\lfloor n/a^1_k \rfloor$.
\end{theorem}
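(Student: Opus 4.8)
The plan is to prove the two inequalities separately, exploiting the fact that both the triangle-free and the $P_k$-free conditions force every component to be simultaneously sparse and ``bushy''. Throughout, for a component $C$ and a vertex $v\in C$ write $\ell_C(v)$ for the maximum number of vertices on a path of $C$ having $v$ as an endpoint, and set $m_C=\min_{v\in C}\ell_C(v)$. The key structural observation is the \emph{cross-saturation condition}: if $u\in C_i$ and $v\in C_j$ lie in distinct components, then $G+uv$ contains no triangle, so saturation forces a $P_k$ through the edge $uv$, and since the two end-paths are vertex-disjoint this yields $\ell_{C_i}(u)+\ell_{C_j}(v)\ge k$. Minimizing over $u,v$ gives $m_{C_i}+m_{C_j}\ge k$ for all $i\ne j$. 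This is the engine driving the whole argument.

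For the upper bound I would take the extremal building block $T$ of order $a^1_k$ from Section~2 (a tree that is triangle-free, $P_k$-free, internally $\{K_3,P_k\}$-saturated, and with $m_T\ge\ceil{k/2}$), place $\flr{n/a^1_k}$ disjoint copies, and absorb the remaining $r=n-a^1_k\flr{n/a^1_k}$ vertices into a single copy by hanging them as leaves in positions that keep the longest path below $k$ and keep $m$ at least $\ceil{k/2}$. The resulting graph is a forest with exactly $\flr{n/a^1_k}$ components, hence has $n-\flr{n/a^1_k}$ edges; triangle-freeness is automatic, $P_k$-freeness holds blockwise, internal non-edges create a $K_3$ or $P_k$ by the design of $T$, and cross non-edges create a $P_k$ because $m_T+m_T\ge k$. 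This direction is routine once the block $T$ is in hand.

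For the lower bound, let $G$ be any $\{K_3,P_k\}$-saturated graph with components $C_1,\dots,C_c$. Since every connected graph has at least $(\text{order})-1$ edges, $e(G)\ge n-c$, so it suffices to show $c\le\flr{n/a^1_k}$. Order the components so that $m_{C_1}\le\cdots\le m_{C_c}$. The cross-saturation condition gives $m_{C_i}\ge k-m_{C_1}$ for every $i\ge2$, so at most one component (namely $C_1$) can have $m<\ceil{k/2}$. The heart of the proof is an \emph{extremal size lemma}: any connected triangle-free, $P_k$-free, internally $\{K_3,P_k\}$-saturated graph $C$ with $m_C=m$ satisfies $|C|\ge g(m)$ for an explicit strictly increasing, convex function $g$ with $g\!\left(\ceil{k/2}\right)=a^1_k$. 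Granting this, one writes
\[
n=\sum_{i=1}^{c}|C_i|\ \ge\ g(m_{C_1})+\sum_{i=2}^{c} g(m_{C_i})\ \ge\ g(m_{C_1})+(c-1)\,g\!\left(k-m_{C_1}\right),
\]
and the convexity of $g$ together with $m_{C_1}\le\ceil{k/2}\le k-m_{C_1}$ yields $g(m_{C_1})+(c-1)g(k-m_{C_1})\ge c\,g\!\left(\ceil{k/2}\right)=c\,a^1_k$ for all $c\ge2$. Hence $c\le n/a^1_k$, i.e.\ $c\le\flr{n/a^1_k}$, and $e(G)\ge n-\flr{n/a^1_k}$ as required; the case $c=1$ is immediate since then $e(G)\ge n-1\ge n-\flr{n/a^1_k}$ once $n\ge a^1_k$.

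The main obstacle is the extremal size lemma and the convexity it must supply. Bounding $|C|$ below in terms of $m_C$ requires a genuine structural analysis of triangle-free graphs that are internally saturated: one must argue that forcing every vertex to anchor a long end-path, while every non-adjacent pair is either at distance $2$ (creating a triangle) or far enough apart to create a $P_k$, compels a branching growth of order roughly $2^{m/2}$, which is exactly where convexity comes from and where the hypotheses $k\ge10$ and $n\ge a^1_k$ are consumed. The delicate point is the single possibly-exceptional component $C_1$ with small $m_{C_1}$: it can be smaller than $a^1_k$, and only the convex compensation from the enlarged remaining components rescues the count, so the inequality $g(m_{C_1})+(c-1)g(k-m_{C_1})\ge c\,a^1_k$ must be checked with care at the boundary values of $m_{C_1}$.
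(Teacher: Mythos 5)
Your upper bound is essentially the paper's own construction (disjoint copies of $T_k^1$ plus one absorbing saturated tree), so no issue there. The lower bound, however, takes a different route and has a fatal gap: your ``extremal size lemma'' is not merely unproven --- it is false as stated, because you assert it for \emph{all} connected, triangle-free, $P_k$-free, internally saturated components, and such components need not be trees. Take $k=10$, so $a_{10}^1=9\cdot 2+2=20$. The cycle $C_5$ is triangle-free, $P_{10}$-free and internally saturated (every chord creates a triangle), and every vertex anchors a Hamiltonian path, so $m_{C_5}=5=\lceil k/2\rceil$ while $|C_5|=5\ll 20=g\left(\lceil k/2\rceil\right)$. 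Worse, the disjoint union of $c$ copies of $C_5$ is genuinely $\{K_3,P_{10}\}$-saturated: chords create triangles, and an edge between two copies concatenates two Hamiltonian paths into a $P_{10}$ (consistent with your correct cross-saturation inequality $\ell_{C_i}(u)+\ell_{C_j}(v)\ge k$, since $5+5=10$). This graph has $c=n/5$ components, demolishing your claimed bound $c\le\lfloor n/a_k^1\rfloor$. It does not contradict the theorem (it has $e=n$ edges, well above the claimed minimum), but it kills the inference ``$e(G)\ge n-c$ and $c\le\lfloor n/a_k^1\rfloor$'': components containing cycles can be small precisely because a cycle lets every vertex anchor a long path with no branching at all, so your $2^{m/2}$ growth heuristic is a tree phenomenon and cannot be extended to general components. (Separately, even granting $g$, your compensation inequality fails at the odd-$k$ boundary: for $c=2$ and $m_{C_1}=\lfloor k/2\rfloor$ it requires $g(\lfloor k/2\rfloor)\ge g(\lceil k/2\rceil)$, contradicting strict monotonicity.)

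The repair is exactly the paper's accounting, which never bounds the number of components at all: split the components into those containing a cycle, which contribute at least $|C_i|$ edges and hence pay for themselves with no size bound needed, and tree components, which contribute $|C_i|-1$ edges and for which alone one needs the size bound $|C_i|\ge a_k^1$. That size bound is the real content of the paper's Lemma \ref{saturatedtree}: every non-star $\{K_3,P_k\}$-saturated tree contains $T_k^0$ or $T_k^1$ as a subtree, proved by a layer-by-layer analysis of root-paths hanging off a longest path --- this is precisely the structural work your proposal defers to the unproven lemma. With this split the computation $e(G)\ge n_0+(n-n_0)-l\ge n-\lfloor n/a_k^1\rfloor$ closes immediately, and the convexity machinery becomes unnecessary; your $m_C$-based cross-saturation condition remains correct and useful (the paper uses the same idea to verify its construction $G_0$ is saturated), but it cannot carry the lower bound on its own.
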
 Based on the result above, we can deduce two bounds on $\sat( n,K_3\cup P_k)$.
\begin{theorem}\label{k3vpk}
	For $k\ge 10$ and $n$ sufficiently large, we have that 
    \begin{equation}\label{twobounds}
    2+\sat( n,\{ K_3,P_k \} )\le \sat( n,K_3\cup P_k ) \le 6+\sat( n,\{ K_3,P_k \} ).
    \end{equation}
\end{theorem}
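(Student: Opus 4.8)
The plan is to prove the two inequalities separately, both resting on one structural observation. First I would record the following reduction: if $G$ is $K_3\cup P_k$-saturated and not complete, then $G$ must contain a copy of $K_3$ or of $P_k$. Indeed, were $G$ both $K_3$-free and $P_k$-free, pick any non-edge $e=uv$; by saturation $G+e$ contains a triangle $T$ and a vertex-disjoint path $Q\cong P_k$. Since $G$ is triangle-free, the new edge $e$ must lie on $T$, so $u,v\in V(T)$; as $Q$ avoids $V(T)$ it avoids $u,v$ and hence uses no new edge, giving $Q\subseteq G$ and contradicting that $G$ is $P_k$-free. For $n$ large and $k\ge 10$ we have $K_n\supseteq K_3\cup P_k$, so a saturated $G$ is never complete and the hypothesis applies. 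This dichotomy is the backbone of both bounds.

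For the lower bound I would take $G\in\Sat(n,K_3\cup P_k)$ and focus on the case that $G$ contains a triangle $T=xyz$ (the case where $G$ is triangle-free but contains a $P_k$ I expect to yield strictly more edges, since it forces every non-adjacent pair to share a common neighbour together with a movable $P_k$, and I would dispose of it separately). Deleting $V(T)$ gives $G'=G-\{x,y,z\}$ on $n-3$ vertices, which is $P_k$-free because any $P_k$ in $G'$ would be disjoint from $T$ and create a forbidden $K_3\cup P_k$. The heart of the argument is to show that $G'$, possibly after absorbing a bounded ``triangle core'' rather than just three vertices, is $\{K_3,P_k\}$-saturated, so that $e(G')\ge \sat(n-3,\{K_3,P_k\})$, and then to account for the edges incident with the deleted core. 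Using Theorem~\ref{saturationnumber2}, $\sat(n,\{K_3,P_k\})-\sat(n-3,\{K_3,P_k\})=3-(\flr{n/a_k^1}-\flr{(n-3)/a_k^1})\in\{2,3\}$ since $a_k^1>3$; combining this floor arithmetic with the at least three edges of $T$ and a constant number of its connections to $G'$ should yield $e(G)\ge 2+\sat(n,\{K_3,P_k\})$.

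For the upper bound I would exhibit a $K_3\cup P_k$-saturated graph with at most $6+\sat(n,\{K_3,P_k\})$ edges. Starting from an extremal $\{K_3,P_k\}$-saturated graph supplied by Theorem~\ref{saturationnumber2}, I would attach a constant-size gadget $R$ carrying a single triangle, joined to the rest by a few edges, arranged so that (i) every triangle of the resulting graph $G$ lies in $R$ and deleting the triangle's vertices leaves a $P_k$-free graph, so that $G$ is $K_3\cup P_k$-free; and (ii) for every non-edge $e$, $G+e$ produces a triangle together with a vertex-disjoint $P_k$. The verification of (ii) splits by the location of $e$: an edge inside the large $\{K_3,P_k\}$-saturated part should create a $P_k$ there, disjoint from the triangle of $R$, while an edge meeting $R$ should be absorbed by designing $R$ so that a triangle survives even as a path of length $k$ is completed using a vertex of $R$ as an endpoint. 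Bookkeeping the triangle's three edges together with the connecting edges, net of the edges already counted in $\sat(n,\{K_3,P_k\})$, is where the additive constant $6$ arises.

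The main obstacle, in both directions, is the interface between the ``triangle part'' and the ``$P_k$-saturated part''. For the upper bound this is the design of $R$ and its attachment so that edges incident with $R$ still force a disjoint triangle-and-path; the naive disjoint union of a $K_3$ with a $P_k$-saturated graph fails precisely on edges joining the triangle to the rest, and repairing this is what costs the extra edges. For the lower bound the difficulty is pinning the constant down to exactly $2$: one must argue that deleting a bounded core really returns a $\{K_3,P_k\}$-saturated graph, controlling stray triangles in $G'$, and then reconcile the count with the floor terms $\flr{n/a_k^1}$, which can shift the comparison between $\sat(n,\{K_3,P_k\})$ and $\sat(n-3,\{K_3,P_k\})$ by one and must be absorbed by the extra core edges.
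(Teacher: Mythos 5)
Both halves of your proposal correctly identify where the difficulty lies, but neither half closes it, and in each case the specific plan you commit to is flawed. For the upper bound, a gadget $R$ ``carrying a single triangle'' cannot satisfy your own condition (ii). Any $(K_3\cup P_k)$-saturated graph with only $6+\sat(n,\{K_3,P_k\})$ edges is forced to be disconnected with roughly $\lfloor n/a_k^1\rfloor$ tree components (since $\sat(n,\{K_3,P_k\})+6<n-1$ for large $n$), so consider a non-edge $uv$ with $u$ on the unique triangle of $R$ and $v$ in another component. Then $u$ and $v$ have no common neighbour, so $uv$ cannot lie in the triangle of the new copy of $K_3\cup P_k$; hence $uv$ lies on the $P_k$, and the accompanying triangle must be vertex-disjoint from that path, in particular must avoid $u$ --- impossible when the only triangle passes through $u$. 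The paper's construction (Lemma \ref{H0}) resolves exactly this by taking a $K_4$ with a pendant copy of $T^1_k$ at each of its four vertices: whichever vertex of the $K_4$ the path consumes, the remaining three still span a triangle, and the six edges of the $K_4$ are precisely where the additive constant $6$ comes from. You flagged the correct desideratum (``a triangle survives even as a path is completed using a vertex of $R$'') but a single triangle contradicts it, and no verified construction is supplied.

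For the lower bound, the central step --- that $G'=G-V(T)$, possibly after absorbing a bounded core, is $\{K_3,P_k\}$-saturated --- is both unproven and false as stated; the paper's own extremal graph $H_0$ is a counterexample, since its triangles are the triples inside the $K_4$ and deleting such a triple leaves components of the form $T^1_k$ minus a vertex, whose saturation in $H_0$ was witnessed only by paths running through the deleted $K_4$: adding an edge inside such a component creates neither a triangle nor a $P_k$. Even granting saturation of $G'$, your arithmetic does not close: with $\sat(n,\{K_3,P_k\})-\sat(n-3,\{K_3,P_k\})\in\{2,3\}$, the three edges of $T$ give only $e(G)\ge \sat(n-3,\{K_3,P_k\})+3\ge \sat(n,\{K_3,P_k\})$, short of the required ``$+2$''; and in the decisive disconnected case the triangle's component need not have any countable ``connections to $G'$''. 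The paper never deletes the triangle. It reduces to the case where $H$ contains a triangle and at least two tree components (otherwise $e(H)\ge n-1$ already exceeds the bound for large $n$), shows each tree component is itself $\{K_3,P_k\}$-saturated --- hence by Lemma \ref{saturatedtree} has order at least $a_k^1$, which bounds the number of tree components by $\lfloor(n-n_0-n_0')/a_k^1\rfloor$ --- and then proves the step your sketch lacks: the component $Q_1^*$ containing the triangle satisfies $e(Q_1^*)\ge n_0+2$ (its Claim 2), obtained by adding edges between the vertices of two putative triangles and extracting a third independent cycle. Those two surplus edges are exactly the ``$+2$'' in the statement; without an argument of this kind your proposal does not yield the lower bound.
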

\noindent In fact,  the upper bound of $\sat( n,K_3\cup P_k )$ in Relation (\ref{twobounds}) is better than the upper bound in [\cite{HJ25}, Theorem 2.10].
   We are also interested in saturation number of the join of graphs.  K\'{a}szonyi and Tuza \cite{KT86} showed that $G$ is $F'$-saturated if and only if $G\setminus\{v^*\}$ is $F$-saturated, where $G$ has some center vertex $v^*$ and $F'$ has a  center vertex $v^{*}_1$ such that $F=F'\setminus \{v^*_1\}$. Cameron and Puleo \cite{CP22} showed that $\sat(n,F')\le(n-1)+\sat(n-1,F)$ for all $n>|V(F)|$. A natural problem is to find all graphs such that the equality  holds.
\begin{problem}\label{p1}
	For $n$ sufficiently large, determine the graph family $\mathcal{F}$ such that for each $F\in \mathcal{F}$ we have
    \begin{equation}\label{p11}
       \sat(n,K_1\lor F)=n-1+\sat(n-1,F).   
    \end{equation}  
\end{problem}

\noindent Recently, Hu et al. \cite{HSQ25} studied Problem \ref{p1}
 and  confirmed it for $F\cong P_t$ with $t\ge 5$ and  sufficiently large $n$. Song et al. \cite{SHJC25}  confirmed Problem \ref{p1} for $F\cong C_4$ and determined all minimum saturated graphs.  Hu et al. \cite{SZY25} showed that $\sat(n,K_s\lor F) =\tbinom{s}{2} +s(n-s) +\sat( n-s,F) $ for $n\ge 3s^2-s+2\sat(n-s,F)+1$, where $F$ is a graph without isolated vertex. Qiu et al. \cite{YZMY25} got an observation that in the above result, the restriction condition on $n$ implies that $F$ contains isolated edges. Moreover, they solved Problem \ref{p1} for the case $F\cong C_l$ with  $l\ge8$.  Note that we usually call $K_1\lor C_l$ a \emph{wheel} graph for $l\ge 3$.
We will research the problem for the case that $F$ is a linear forest with isolated vertices, and obtain the following result. 
\begin{theorem}\label{saturationjoin}
	Let $G$ be a minimum $K_1\lor F$-saturated graph, then $e(G)=(n-1)+\sat(n-1,F)$ for sufficiently large $n$ and $\Sat(n, K_1\lor F)=\{K_1\lor H:  H\, \emph{is a minimum}\, $F$\emph{-saturated graph}\}$.
\end{theorem}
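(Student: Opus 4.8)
The plan is to reduce the entire statement to one structural fact: for sufficiently large $n$, every minimum $K_1\lor F$-saturated graph $G$ contains a universal vertex. Grant this for the moment and let $v^*$ be such a vertex. The equivalence of K\'{a}szonyi and Tuza quoted above applies with $G$ as the graph carrying a center vertex $v^*$ and $K_1\lor F$ as the graph $F'$ carrying the center $v^*_1$ with $F=(K_1\lor F)\setminus\{v^*_1\}$; hence $G$ is $K_1\lor F$-saturated if and only if $G-v^*$ is $F$-saturated. A universal vertex contributes exactly $n-1$ edges, so $e(G)=(n-1)+e(G-v^*)\ge(n-1)+\sat(n-1,F)$, while the reverse inequality is the bound of Cameron and Puleo, attained by $K_1\lor H$ for any minimum $F$-saturated $H$. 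Equality therefore forces $e(G-v^*)=\sat(n-1,F)$, so $G-v^*$ is a minimum $F$-saturated graph and $G=K_1\lor(G-v^*)$; combined with the fact that every $K_1\lor H$ with $H$ minimum $F$-saturated is itself minimum $K_1\lor F$-saturated, this yields both the value $e(G)=(n-1)+\sat(n-1,F)$ and the description of $\Sat(n,K_1\lor F)$.

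To locate the universal vertex I would proceed in two stages. Fix a minimizer $G$, so by the upper bound $e(G)\le(n-1)+\sat(n-1,F)\le 2(n-1)$ and the average degree is below $4$. Put $m=|V(F)|$; choosing a constant degree threshold appropriately (and disposing of the finitely many small $F$ by the results quoted above), the set $S$ of vertices of degree at most $m-2$ satisfies $|S|=\Theta(n)$. For a non-adjacent pair $x,y\in S$, adding $xy$ creates a copy of $K_1\lor F$ that must use $xy$; since $x$ and $y$ keep degree at most $m-1$, neither can be the degree-$m$ center of that copy, so $xy$ is an edge of its linear-forest part and the center $w$ is a common neighbour of $x,y$ with $\deg_G(w)\ge m$. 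Thus each of the $\binom{|S|}{2}-O(n)=\Theta(n^2)$ non-edges inside $S$ is covered by a common neighbour, while a vertex $w$ covers at most $\binom{\deg_G(w)}{2}$ pairs. Summing gives $\sum_w\binom{\deg_G(w)}{2}=\Theta(n^2)$, and since $\sum_w\binom{\deg_G(w)}{2}\le\tfrac12\Delta(G)\sum_w\deg_G(w)=\Delta(G)\cdot e(G)=O(\Delta(G)\,n)$, we conclude $\Delta(G)=\Theta(n)$: some vertex $v^*$ is adjacent to a positive fraction of the vertices.

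The crux, and the step I expect to be hardest, is upgrading this heavy vertex to a universal one. Here I would exploit three facts: $G[N(v^*)]$ is $F$-free (otherwise $\{v^*\}\lor F\subseteq G$), the set $N(v^*)$ is linear in size while $|V(F)|$ is constant, and $G$ is edge-minimal. For a putative non-neighbour $u$ of $v^*$, the edge $uv^*$ creates a copy of $K_1\lor F$, and one argues (via a degree and abundance argument that rules out a center inside the sparse remainder) that its center must be $v^*$, so a copy of $F$ lives in $G[N(v^*)\cup\{u\}]$ and genuinely uses $u$. Because $F$ is a linear forest, $u$ meets this copy in at most two vertices; the large supply of low-degree vertices in $N(v^*)$ should then let $u$ be exchanged for a vertex of $N(v^*)$, producing a copy of $F$ inside $G[N(v^*)]$ and contradicting its $F$-freeness, whence $u$ cannot exist and $v^*$ is universal. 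Making this rerouting uniform—controlling the at most two attachment points of $u$ and guaranteeing a substitute inside $N(v^*)$ from minimality and the linear structure alone—is the main obstacle; the isolated-vertex components of $F$, which become pendant neighbours of the center in $K_1\lor F$, and the very short path components will need separate and more careful bookkeeping.
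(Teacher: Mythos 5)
Your endgame coincides with the paper's: once a universal vertex $v^*$ is found, Lemma \ref{le2.4} gives that $G-v^*$ is $F$-saturated, whence $e(G)\ge (n-1)+\sat(n-1,F)$, and Lemma \ref{le2.5} gives the matching upper bound, from which the value and the description of $\Sat(n,K_1\lor F)$ both follow. The genuine gap is the existence of the universal vertex, i.e.\ exactly the step you flag as the ``main obstacle.'' Your first stage (the common-neighbour count showing $\Delta(G)=\Theta(n)$) is sound for $|V(F)|\ge 6$, but the upgrade from $\Delta(G)=\Theta(n)$ to $\Delta(G)=n-1$ is not a proof and, as sketched, would fail. When $uv^*$ is added, the created copy of $K_1\lor F$ need not be centered at $v^*$: its center can be $u$ itself (nothing bounds $d(u)$), or a common neighbour $w$ of $u$ and $v^*$, and your ``degree and abundance'' remark does not exclude these cases --- a graph with $e(G)\le 2(n-1)$ can contain $\Theta(n)$ vertices of degree at least $m=|V(F)|$, so there is no sparse remainder to rule out. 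Even in the favourable case where the center is $v^*$, the rerouting step needs a vertex of $N(v^*)$ adjacent to the (at most two) attachment points of $u$ in the copy of $F$; neither edge-minimality of $G$ nor $F$-freeness of $G[N(v^*)]$ supplies such a substitute, since those two attachment vertices may simply have no further common neighbour inside $N(v^*)$. In addition, your disposal of the small forests ($|V(F)|\le 5$, where the set $S$ of low-degree vertices need not have linear size) by appeal to ``the results quoted above'' is not available for all such $F$ (e.g.\ $P_3$, $P_2\cup P_2$, $P_2\cup P_3$).

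For comparison, the paper closes this gap by a much shorter route that avoids any degree counting: first, $\diam(G)=2$, because joining a nonadjacent pair $u,v$ places both in a copy of $K_1\lor F$, and either both lie in the forest part (the center is a common neighbour) or one of them plays the role of $K_1$, in which case the other has a neighbour inside the forest --- here the hypothesis that $F$ has no isolated vertices is used --- adjacent to both. Then, if $\Delta(G)\le n-2$, any cut vertex would have to be the common neighbour of every pair of vertices from different components of its deletion, forcing it to be universal; so $G$ is $2$-connected, and Bollob\'as's bound (Lemma \ref{le4.1}) yields $e(G)\ge 2n-5$, which contradicts the Cameron--Puleo upper bound because, by Lemma \ref{le2.6}, $(n-1)+\sat(n-1,F)=2(n-1)-\lfloor (n-1)/a_{k_t}\rfloor+O(1)<2n-5$ for $n$ sufficiently large. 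This handles all linear forests uniformly and replaces your entire two-stage argument; if you want to salvage your approach, you would need an actual mechanism for the ``upgrade'' step, and the $2$-connectivity-plus-diameter-$2$ contradiction is precisely such a mechanism.
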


For convenience, we now define some terminology and notation. All graphs considered in the paper are finite and simple.  For a given graph $G$, let $V(G)$ and $E(G)$ be the \textit{vertex-set} and \textit{edge-set} of $G$, respectively. %Denote $K_k$, $C_k$, $P_k$ and $S_k$ a complete graph, a cycle, a path and a star with $k$, respectively. 
Let  $G[S]$ be the  subgraph of $G$ induced by $S$ if $S\subseteq V(G)$.
 For any $v\in V(G)$, let $N_G(v)$ denote the set of vertices adjacent to $v$ and $N_G[v]=N_G(v)\cup \{v\}$. The \emph{degree} of a vertex $v$ is $|N_G(v)|$ and let $\delta( G)$ and $\varDelta ( G )$ denote the minimum and maximum degree of a vertex in $G$, respectively. A vertex $v^*$ of $G$ on order $n$ is called a \textit{center vertex} if $d(v^*)=n-1$. A graph is said to be \emph{connected}, if for every pair of vertices there is a path joining them, disconnected otherwise. A maximal connected subgraph of $G$ is called a component of $G$. We call vertex $v\in V(G)$ a cut vertex if removing $v$ from $G$ increases components.  The \emph{connectivity} $\kappa ( G)$ of $G$ is the minimum size of vertex subset $S$ such that $G-S$ is disconnected or has only one vertex. 
 For vertices $u,v\in V(G)$, the \emph{distance} $d_G( u,v)$ of $u$ and $v$ is the length of a shortest path from $u$ to $v$. The \emph{diameter} $\diam(G)$ of $G$ is the largest distance over all pairs of vertices of $G$. 
 Given any two vertex-disjoint graphs $G$ and $H$, let $G\cup H$ be the \emph{union} of $G$ and $H$ with vertex set $V(G)\cup V(H)$ and edge set $E(G)\cup E(H)$, and let $G\lor H$ be the \emph{join} of  $G$ and $H$ obtained by adding all edges between  $G$ and $H$ in $G\cup H$.   For other notions not defined here, refer to  \cite{B78,BM08}.

The remainder of this paper is organized as follows. In Section $2$, we introduce  some basic results which will be used in the sequel. In Section $3$, we  show the exact value of $\sat(n,\{K_3,P_k\})$ with $k\ge 10$, and give an upper bound and a lower bound of $\sat(n,K_3\cup P_k)$. In Section $4$, we determine the saturation number of $K_1\lor F$ and characterize all extremal graphs. In Section $5$,  we begin with a brief summary and then pose an unsolved problem. Furthermore, the minimum $\{K_3,P_k\}$-saturated trees are also presented with $k\le 9$ for the sake of completeness.

\section{Preliminary}
 We begin this section by introducing three types of trees, and then present some basic conclusions on saturation numbers of $\{K_3,P_k\}$. 

 {\bf Layer:} In order to describe clearly  the  structure of a tree, we introduce the notation of ``layer" of a tree. Let $T$ be tree with $diam(T)=s\ge 2$. Hence, $T$ has a longest path of order $s+1$, say $P_{s+1}=v_1v_2\cdots v_{s+1}$. We call the middle two vertices (or one vertex) belonging to the $1$-layer of $T$,
and all other vertices belonging to the $i$-layer if their distance to the $1$-layer is $i-1$ for $2\le i\le \lceil\tfrac{s+1}{2}\rceil$.
More formally, we use  $l(v)$ denote the the layer number of every vertex $v\in V(T)$,  in other words, $l(v)=i$ if and only if  $v$ is lying on the $i$-layer of  $T$. We observe that all vertices of a tree with diameter $s$ can be partitioned  into the $ \lceil\tfrac{s+1}{2}\rceil$ layers.

 \vspace{3mm}
   \begin{figure}[h]\label{1}
  \begin{center}
\begin{picture}(292.7,62.3)\linethickness{0.8pt}
\Line(45,47.6)(29.7,35.6)
\Line(29.7,35.6)(17,23.6)
\Line(29.7,35.6)(28.3,23.6)
\Line(48.3,24)(56,10)
\Line(48.3,24)(48.3,10)
\Line(38.3,24)(40,10)
\Line(28.3,23.6)(24,10)
\Line(28.3,23.6)(16,10)
\Line(17,23.6)(0,10)
\Line(44.3,35.6)(38.3,24)
\Line(44.3,35.6)(48.3,24)
\Line(38.3,24)(32,10)
\Line(45,47.6)(73.3,47.6)
\Line(45,47.6)(44.3,35.6)
\Line(17,23.6)(8.3,10)
\Line(76.3,24.3)(77.3,10.6)
\Line(76.3,24.3)(69.3,10.3)
\Line(76,35.6)(76.3,24.3)
\Line(73.3,47.6)(91.3,35.6)
\Line(73.3,47.6)(76,35.6)
\Line(76,35.6)(86,24.3)
\Line(108.3,24.3)(125.3,10.3)
\Line(108.3,24.3)(117.3,10.3)
\Line(96,24.3)(109.3,10.3)
\Line(96,24.3)(101.3,10.3)
\Line(86,24.3)(85.3,10.3)
\Line(108.3,24.3)(117.3,10.3)
\Line(86,24.3)(93.3,10.3)
\Line(91.3,35.6)(108.3,24.3)
\Line(91.3,35.6)(96,24.3)
\Line(149.3,22.6)(139.3,10.3)
\Line(160.3,22)(152.7,10.3)
\Line(160.3,22)(159.3,10.3)
\Line(163.7,35.6)(160.3,22)
\Line(175,22.6)(172.7,10.3)
\Line(175,22.6)(166,10.3)
\Line(149.3,22.6)(146,10.3)
\Line(182.3,35.6)(175,22.6)
\Line(163.7,35.6)(149.3,22.6)
\Line(182.3,46.3)(163.7,35.6)
\Line(229.3,22.6)(219.3,10.3)
\Line(198.7,22.6)(206,10.3)
\Line(198.7,22.6)(212.7,10.3)
\Line(191.3,22.6)(199.3,10.3)
\Line(191.3,22.6)(192.7,10.3)
\Line(198.7,35.6)(198.7,22.6)
\Line(181.7,22.6)(186,10.3)
\Line(181.7,22.6)(179.3,10.3)
\Line(182.3,35.6)(181.7,22.6)
\Line(198.7,35.6)(191.3,22.6)
\Line(214.3,46.3)(229.3,35.6)
\Line(214.3,46.3)(198.7,35.6)
\Line(214.3,62.3)(214.3,46.3)
\Line(182.3,46.3)(182.3,35.6)
\Line(214.3,62.3)(248.3,46.3)
\Line(214.3,62.3)(182.3,46.3)
\Line(257,22.6)(266,10.3)
\Line(257,22.6)(259.3,10.3)
\Line(248.3,22.6)(252.7,10.3)
\Line(248.3,22.6)(246,10.3)
\Line(229.3,22.6)(226,10.3)
\Line(237.7,22.6)(239.3,10.3)
\Line(237.7,22.6)(232.7,10.3)
\Line(248.3,35.6)(248.3,22.6)
\Line(229.3,35.6)(237.7,22.6)
\Line(248.3,35.6)(257,22.6)
\Line(248.3,46.3)(248.3,35.6)
\Line(229.3,35.6)(229.3,22.6)
\Line(248.3,46.3)(269.3,35.6)
\Line(273.7,22.6)(279.3,10.3)
\Line(273.7,22.6)(272.7,10.3)
\Line(269.3,35.6)(273.7,22.6)
\Line(282.3,22.6)(292.7,10.3)
\Line(282.3,22.6)(286,10.3)
\Line(269.3,35.6)(282.3,22.6)
\put(210,-11){$T_{10}$}
\put(60.4,-11){$T_9$}
\put(29.7,35.6){\circle*{4}}
\put(48.3,24){\circle*{4}}
\put(44.3,35.6){\circle*{4}}
\put(45,47.6){\circle*{4}}
\put(38.3,24){\circle*{4}}
\put(28.3,23.6){\circle*{4}}
\put(17,23.6){\circle*{4}}
\put(77.3,10.6){\circle*{4}}
\put(76.3,24.3){\circle*{4}}
\put(76,35.6){\circle*{4}}
\put(73.3,47.6){\circle*{4}}
\put(109.3,10.3){\circle*{4}}
\put(101.3,10.3){\circle*{4}}
\put(85.3,10.3){\circle*{4}}
\put(93.3,10.3){\circle*{4}}
\put(69.3,10.3){\circle*{4}}
\put(56,10){\circle*{4}}
\put(48.3,10){\circle*{4}}
\put(40,10){\circle*{4}}
\put(24,10){\circle*{4}}
\put(16,10){\circle*{4}}
\put(0,10){\circle*{4}}
\put(108.3,24.3){\circle*{4}}
\put(91.3,35.6){\circle*{4}}
\put(96,24.3){\circle*{4}}
\put(86,24.3){\circle*{4}}
\put(8.3,10){\circle*{4}}
\put(32,10){\circle*{4}}
\put(125.3,10.3){\circle*{4}}
\put(117.3,10.3){\circle*{4}}
\put(117.3,10.3){\circle*{4}}
\put(139.3,10.3){\circle*{4}}
\put(152.7,10.3){\circle*{4}}
\put(159.3,10.3){\circle*{4}}
\put(160.3,22){\circle*{4}}
\put(172.7,10.3){\circle*{4}}
\put(166,10.3){\circle*{4}}
\put(146,10.3){\circle*{4}}
\put(175,22.6){\circle*{4}}
\put(149.3,22.6){\circle*{4}}
\put(163.7,35.6){\circle*{4}}
\put(219.3,10.3){\circle*{4}}
\put(206,10.3){\circle*{4}}
\put(212.7,10.3){\circle*{4}}
\put(199.3,10.3){\circle*{4}}
\put(192.7,10.3){\circle*{4}}
\put(198.7,22.6){\circle*{4}}
\put(186,10.3){\circle*{4}}
\put(179.3,10.3){\circle*{4}}
\put(181.7,22.6){\circle*{4}}
\put(191.3,22.6){\circle*{4}}
\put(198.7,35.6){\circle*{4}}
\put(214.3,46.3){\circle*{4}}
\put(182.3,35.6){\circle*{4}}
\put(182.3,46.3){\circle*{4}}
\put(214.3,62.3){\circle*{4}}
\put(266,10.3){\circle*{4}}
\put(259.3,10.3){\circle*{4}}
\put(252.7,10.3){\circle*{4}}
\put(246,10.3){\circle*{4}}
\put(226,10.3){\circle*{4}}
\put(239.3,10.3){\circle*{4}}
\put(232.7,10.3){\circle*{4}}
\put(248.3,22.6){\circle*{4}}
\put(237.7,22.6){\circle*{4}}
\put(257,22.6){\circle*{4}}
\put(248.3,35.6){\circle*{4}}
\put(229.3,22.6){\circle*{4}}
\put(229.3,35.6){\circle*{4}}
\put(248.3,46.3){\circle*{4}}
\put(279.3,10.3){\circle*{4}}
\put(272.7,10.3){\circle*{4}}
\put(273.7,22.6){\circle*{4}}
\put(286,10.3){\circle*{4}}
\put(282.3,22.6){\circle*{4}}
\put(269.3,35.6){\circle*{4}}
\put(292.7,10.3){\circle*{4}}
\end{picture}
\end{center}
\caption{Two examples of $T_k$.}%\label{1}
		\end{figure}
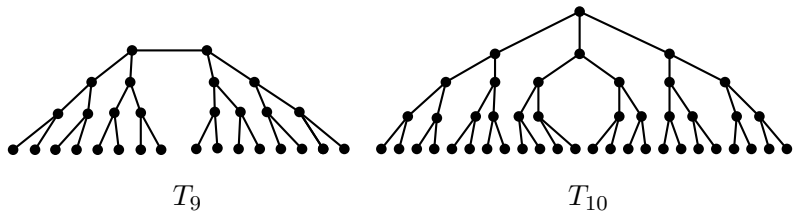
We first restate the definition of $T_k$ \cite{KT86} as follows. Suppose that $T_k$ is a tree with $\lfloor\frac{k}{2}\rfloor$ layers such that all vertices in each layer, except for the $\lfloor\frac{k}{2}\rfloor$-layer, have degree $3$ and the $1$-layer contains $k+1-2\lfloor\frac{k}{2}\rfloor$ vertices.  Two examples are  shown in Figure 1. Let $a_k=|T_k|$. Then $a_k=3\cdot 2^{t-1}-2$ if $k=2t$, $4\cdot 2^{t-1}-2$ if $k=2t+1$.  %还没定义完，明天继续.

 Let $T^0_k$ be a tree containing $\lceil\tfrac{k-2}{2}\rceil$ layers such that the $1$-layer has $\phi(k)$ vertices and then each vertex of the $i$-layer has degree $3$ for $ i\le \lceil\tfrac{k-2}{2}\rceil-2$, each vertex of the $(\lceil\tfrac{k-2}{2}\rceil-1)$-layer has degree $2$, where $\phi(k)=2$ for even $k$, $1$ otherwise. Two examples are presented in Figure 2. %\ref{2}
Evidently, $diam(T^0_k)=k-3$. Let $a^0_k=|T^0_k|$, then $a^0_k=3\cdot 2^{t-2}-2$ if $k=2t$, $9\cdot2^{t-3}-2$ if $k=2t+1$.

%Let $T^0_k$ be a tree containing $t-1$-layers   such that the $1$-layer contains $\phi(k)$ vertices and then each vertex of the $i$-layer has degree three for $ i\le t-3$, each vertex of the $(t-2)$-layer has degree two if $k=2t$, with $t$-layers for which $1$-layer contains a unique vertex and then each vertex of the $i$-layer has degree three for $i\le t-2$, each vertex of the $(t-1)$-layer has degree two if $k=2t+1$.  Evidently, $diam(T^0_k)=k-3$. Let $a^0_k=|T^0_k|$, then $a^0_k=3\cdot 2^{t-2}-2$ if $k=2t$, $9\cdot2^{t-3}-2$ if $k=2t+1$.% %检查定义
\vspace{3mm}
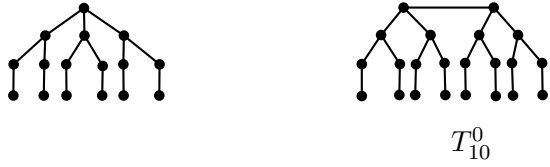
\begin{figure}[h]\label{2}
\begin{center}
\begin{picture}(200.2,54.3)\linethickness{0.8pt}
\Line(20,32.8)(20.1,21)
\Line(26.9,43.7)(20,32.8)
\Line(11.7,32.8)(11.5,21)
\Line(12.1,43.7)(11.7,32.8)
\Line(.1,32.8)(0,21)
\Line(12.1,43.7)(.1,32.8)
\Line(26.7,54.1)(12.1,43.7)
\Line(41.6,32.8)(41.7,21)
\Line(41.8,43.7)(41.6,32.8)
\Line(41.8,43.7)(55.3,32.8)
\Line(33.8,32.2)(33.6,21)
\Line(26.9,43.7)(33.8,32.2)
\Line(26.7,54.1)(41.8,43.7)
\Line(26.7,54.1)(26.9,43.7)
\Line(55.3,32.8)(55.2,21)
%\put(24.1,0){$T^0_9$}
\Line(146.2,32.9)(146,21.2)
\Line(139.1,43.9)(146.2,32.9)
\Line(131.8,32.9)(131.8,20.8)
\Line(139.1,43.9)(131.8,32.9)
\Line(147.6,54.3)(139.1,43.9)
\Line(152.3,32.8)(152,21.2)
\Line(157.8,43.9)(163.1,33.3)
\Line(157.8,43.9)(152.3,32.8)
\Line(147.6,54.3)(157.8,43.9)
\Line(147.6,54.3)(181.7,54.3)
\put(165.6,0){$T^0_{10}$}
\Line(163.1,33.3)(163.4,21.2)
\Line(182.3,33.3)(182.5,20.8)
\Line(170.9,33.3)(171.1,21.2)
\Line(176.2,43.9)(170.9,33.3)
\Line(176.2,43.9)(182.3,33.3)
\Line(181.7,54.3)(190.9,43.9)
\Line(181.7,54.3)(176.2,43.9)
\Line(188.6,33.3)(189,21)
\Line(190.9,43.9)(188.6,33.3)
\Line(190.9,43.9)(200,33.3)
\Line(200,33.3)(200.2,21.2)
\put(20,32.8){\circle*{4}}
\put(11.7,32.8){\circle*{4}}
\put(.1,32.8){\circle*{4}}
\put(12.1,43.7){\circle*{4}}
\put(41.7,21){\circle*{4}}
\put(41.6,32.8){\circle*{4}}
\put(33.6,21){\circle*{4}}
\put(20.1,21){\circle*{4}}
\put(33.8,32.2){\circle*{4}}
\put(11.5,21){\circle*{4}}
\put(0,21){\circle*{4}}
\put(41.8,43.7){\circle*{4}}
\put(26.9,43.7){\circle*{4}}
\put(26.7,54.1){\circle*{4}}
\put(55.2,21){\circle*{4}}
\put(55.3,32.8){\circle*{4}}
\put(146,21.2){\circle*{4}}
\put(146.2,32.9){\circle*{4}}
\put(131.8,20.8){\circle*{4}}
\put(131.8,32.9){\circle*{4}}
\put(139.1,43.9){\circle*{4}}
\put(147.6,54.3){\circle*{4}}
\put(152,21.2){\circle*{4}}
\put(152.3,32.8){\circle*{4}}
\put(157.8,43.9){\circle*{4}}
\put(163.4,21.2){\circle*{4}}
\put(163.1,33.3){\circle*{4}}
\put(182.5,20.8){\circle*{4}}
\put(171.1,21.2){\circle*{4}}
\put(170.9,33.3){\circle*{4}}
\put(182.3,33.3){\circle*{4}}
\put(176.2,43.9){\circle*{4}}
\put(181.7,54.3){\circle*{4}}
\put(189,21){\circle*{4}}
\put(188.6,33.3){\circle*{4}}
\put(190.9,43.9){\circle*{4}}
\put(200,33.3){\circle*{4}}
\put(200.2,21.2){\circle*{4}}
\end{picture}
\end{center}
\vspace{-5mm}
\caption{Two examples of $T^0_k$.}%\label{2}
		\end{figure}
\begin{figure}[h]\label{3}
\begin{center}
\begin{picture}(245,62.8)\linethickness{0.8pt}
\Line(28.2,48.1)(27.5,36.1)
\Line(28.2,48.1)(47.2,48.1)
\Line(28.2,48.1)(12.8,36.1)
\Line(63.5,36.1)(63.2,24.1)
\Line(63.5,36.1)(78,24.1)
\Line(47.2,36.1)(47,24.1)
\Line(47.2,48.1)(47.2,36.1)
\Line(47.2,48.1)(63.5,36.1)
\Line(27.5,36.1)(36.7,24.1)
\Line(12.8,36.1)(11.5,24.1)
\Line(12.8,36.1)(.2,24.1)
\Line(27.5,36.1)(27.8,24.1)
\Line(78,24.1)(78.2,9)
\Line(.2,24.1)(0,9)
\Line(27.8,24.1)(27.7,9)
\Line(167,36.1)(153.5,24.1)
\Line(153.5,24.1)(153.5,9)
\Line(181.5,48.1)(167,36.1)
\Line(167,36.1)(166.8,24.1)
\Line(197.5,62.8)(213.8,48.1)
\Line(197.5,62.8)(197.5,48.1)
\Line(197.5,48.1)(205.3,36.1)
\Line(197.5,48.1)(188.5,36.1)
\Line(197.5,62.8)(181.5,48.1)
\Line(181.5,48.1)(181.3,36.1)
\Line(188.5,36.1)(188.7,24.1)
\Line(181.3,36.1)(181.2,24.1)
\Line(213.8,48.1)(230,36.1)
\Line(213.8,48.1)(213.8,36.1)
\Line(213.8,36.1)(213.8,24.1)
\Line(205.3,36.1)(205.3,24.1)
\Line(230,36.1)(245,23.8)
\Line(230,36.1)(230.5,24.1)
\Line(245,23.8)(245,9)
\put(192.4,1){$T^1_{10}$}
\put(36.3,0){$T^1_9$}
\put(28.2,48.1){\circle*{4}}
\put(47.2,36.1){\circle*{4}}
\put(63.5,36.1){\circle*{4}}
\put(47.2,48.1){\circle*{4}}
\put(12.8,36.1){\circle*{4}}
\put(27.5,36.1){\circle*{4}}
\put(63.2,24.1){\circle*{4}}
\put(78,24.1){\circle*{4}}
\put(47,24.1){\circle*{4}}
\put(36.7,24.1){\circle*{4}}
\put(11.5,24.1){\circle*{4}}
\put(.2,24.1){\circle*{4}}
\put(27.8,24.1){\circle*{4}}
\put(153.5,9){\circle*{4}}
\put(153.5,24.1){\circle*{4}}
\put(78.2,9){\circle*{4}}
\put(0,9){\circle*{4}}
\put(27.7,9){\circle*{4}}
\put(167,36.1){\circle*{4}}
\put(166.8,24.1){\circle*{4}}
\put(197.5,48.1){\circle*{4}}
\put(197.5,62.8){\circle*{4}}
\put(188.5,36.1){\circle*{4}}
\put(181.3,36.1){\circle*{4}}
\put(181.2,24.1){\circle*{4}}
\put(188.7,24.1){\circle*{4}}
\put(181.5,48.1){\circle*{4}}
\put(213.8,36.1){\circle*{4}}
\put(213.8,24.1){\circle*{4}}
\put(213.8,48.1){\circle*{4}}
\put(205.3,36.1){\circle*{4}}
\put(205.3,24.1){\circle*{4}}
\put(230,36.1){\circle*{4}}
\put(230.5,24.1){\circle*{4}}
\put(245,9){\circle*{4}}
\put(245,23.8){\circle*{4}}
\end{picture}
\end{center}
\vspace{-3mm}
\caption{Two examples of $T^1_k$.}%\label{3}
		\end{figure}

  We now  define a $T^1_k$ as follows. Suppose that $T^1_k$ is a tree with $\lfloor\frac{k}{2}\rfloor$ layers such that all vertices in $i$-layer have degree $3$ for $i\le\lfloor\frac{k}{2}\rfloor-3$, except for $\theta(k)$ vertices of degree three all vertices in $(\lfloor\frac{k}{2}\rfloor-2)$-layer have degree $2$,  except for $\theta(k)$ vertices of degree $2$ all vertices in $(\lfloor\frac{k}{2}\rfloor-1)$-layer have degree $1$ and the $1$-layer contains $k+1-2\lfloor\frac{k}{2}\rfloor$ vertices, where $\theta(k)=3$ for odd $k$, $2$ otherwise; furthermore, these $\theta(k)$ vertices of degree $3$ are adjacent to $\theta(k)$ vertices of degree $2$, and then  paths from these $\theta(k)$ vertices with degree $3$ to vertices in the $1$-layer are internal disjoint,  and each vertex of the $1$-layer possesses at least one of these paths. Two examples are presented in Figure 3.  Evidently, $diam(T^1_k)=k-2$. Let $a^1_k=|T^1_k|$, then $a^1_k=9\cdot 2^{t-4}+2$ if $k=2t$, $3\cdot2^{t-2}+4$ if $k=2t+1$.

% Let $T^1_k$ be a tree with $t$-layers such that when $k=2t+1$, the $1$-layer contains two vertices, each vertex of the $i$-layer has degree three for $i\le t-3$, each vertex of the $(t-2)$-layer has degree two except for vertices $v_{(t-2)1}$,$v_{(t-2)(j_{(t-2)}/2)}$ and $v_{(t-2)(j_{(t-2)})}$ which have degree three, and each vertex of the $(t-1)$-layer has degree one except for vertices $v_{(t-1)1}$,$v_{(t-1)(j_{(t-1)}/2-1)}$ and $v_{(t-1)(j_{(t-1)})}$ which have degree two, when $k=2t$, the $1$-layer contains a unique vertex, each vertex of the $i$-layer has degree three for $i\le t-3$, each vertex of the $(t-2)$-layer has degree two except for vertices $v_{(t-2)1}$ and $v_{(t-2)(j_{(t-2)})}$ which have degree three, and each vertex of the $(t-1)$-layer has degree one, except for vertices $v_{(t-1)1}$ and $v_{(t-1)(j_{(t-1)})}$ which have degree two.  {\color{red}Evidently, $diam(T^1_k)=k-2$. Let $a^1_k=|T^1_k|$, then {$a^1_k=9\cdot 2^{t-4}+2$ if $k=2t$, $3\cdot2^{t-2}+4$ if $k=2t+1$.}} %检查定义

We now recall that two known results of $P_k$-saturated graphs in \cite{KT86}.
\begin{lemma}\emph{(\cite{KT86})}\label{Pk}
  Let $T$ be a $P_k$-saturated tree. Then $T_k\subseteq T$.  
\end{lemma}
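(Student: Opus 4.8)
The plan is to embed $T_k$ into $T$ by building it up layer by layer from the middle outward, using the $P_k$-saturation property to force enough branching at each internal layer and $P_k$-freeness to cap the depth.

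First I would record the two global constraints. Since $T$ is a tree containing no $P_k$, every path of $T$ has at most $k-1$ vertices, so $\diam(T)\le k-2$. I claim saturation forces equality, namely that $T$ contains a path on exactly $k-1$ vertices: if the longest path were shorter, one could exhibit two non-adjacent vertices whose join fails to create a $P_k$, contradicting saturation. Fixing a longest path $P=v_1\cdots v_{k-1}$, its middle one or two vertices form the $1$-layer of $T$ (matching the count $k+1-2\lfloor k/2\rfloor$ prescribed for $T_k$), and I root $T$ there. For each vertex $v$ I let $s(v)$ denote the number of vertices of a longest path of $T$ having $v$ as an endpoint; this is the quantity I will track throughout.

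Next I would convert saturation into a local branching condition. For non-adjacent $u,v$, the graph $T+uv$ must contain a $P_k$, and since $T$ itself is $P_k$-free this copy uses the new edge; hence $T+uv$ contains a path that splits as (a path of $T$ ending at $u$) $+\,uv\,+$ (a path of $T$ ending at $v$), the two parts being vertex-disjoint, with total at least $k$ vertices. Applying this with $u,v$ chosen as leaves hanging below the vertices I am currently trying to branch at, I aim to deduce the following: every vertex reached at layer $i<\lfloor k/2\rfloor$ in the embedding must send down at least two subtrees, each admitting a path on at least $\lfloor k/2\rfloor-i$ further vertices. In particular such a vertex has degree at least $3$, and the subtree below it already contains the corresponding balanced degree-$3$ piece of $T_k$. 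With this in hand I would run a downward induction on the layers: starting from the $1$-layer, I map the layers of $T_k$ into $T$ greedily, at a vertex of layer $i\le\lfloor k/2\rfloor-1$ using the branching condition to select the two required deep children as the next layer, while $P_k$-freeness guarantees the process terminates at depth $\lfloor k/2\rfloor$ without producing a path on $k$ vertices. Since $T_k$ is exactly the balanced degree-$3$ tree of $\lfloor k/2\rfloor$ layers with the prescribed number of central vertices, this exhibits the desired copy $T_k\subseteq T$.

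I expect the main obstacle to be the disjointness bookkeeping in the saturation step. When $u$ and $v$ lie in a common branch below a vertex, the longest path through $uv$ may reuse the portion of $T$ lying between $u$ and $v$, so the naive bound $s(u)+s(v)$ overcounts and the branching inequality must be argued more carefully. Controlling these overlapping cases, together with the boundary layer where the prescribed degrees drop and the parity split between even and odd $k$, is where the real work of the argument concentrates.
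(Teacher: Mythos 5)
There is a genuine gap, and it sits exactly where you predicted the ``real work'' would be --- but the problem is more than bookkeeping. Note first that the paper does not prove Lemma \ref{Pk} at all (it is quoted from K\'{a}szonyi--Tuza); the closest in-paper comparison is the proof of the analogous Lemma \ref{saturatedtree}, whose mechanism differs from yours in a way that matters. Your only saturation probes are edges joining \emph{pairs of leaves}, and these are provably too weak to force the branching you need. Concretely, for $k=6$ take the ``double spider'': a center $c$ with two neighbors $b_1,b_2$, each $b_i$ carrying two pendant leaves. Every leaf--leaf addition (sibling leaves, or leaves in opposite branches) creates a $P_6$, yet $c$ has degree $2$ and $T_6\not\subseteq T$; the additions that expose the defect are leaf-to-internal (e.g.\ a leaf joined to $b_2$) or internal--internal. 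Similarly, replace one branch of $T_6$ by a bare path $c\,d\,e$: all leaf--leaf additions again create $P_6$, and the missing branching at the layer-$2$ vertex $d$ is detected only by the pair $\{e,c\}$. This is precisely why the paper's proof of Lemma \ref{saturatedtree} adds edges between \emph{internal} vertices at distance three along root-paths ($T+w_{i1}w_{(i-3)1}$ in Claim \ref{t-2b}) and between a vertex and the far center ($T+w_{31}v'_{11}$), rather than between leaves. Without such probes, your inductive step ``every vertex reached at layer $i$ sends down two deep subtrees'' cannot be established.

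A second, independent defect: even if your branching condition were proved, it does not suffice to embed $T_k$ for even $k$. There the $1$-layer of $T_k$ is a single center of degree $3$, i.e.\ \emph{three} pairwise-disjoint branches of depth $\lfloor k/2\rfloor-1$ are required, whereas ``at least two subtrees'' at layer $1$ yields only degree $2$ (the double spider above satisfies your layer-$1$ condition and still omits $T_6$). Two branches per vertex is the right count only at layers $i\ge 2$ (parent plus two children) and at the two adjacent centers in the odd case. So the parity split is not merely a boundary adjustment: the even-$k$ center needs its own saturation argument with a non-leaf edge addition. Your opening step --- pinning the diameter to $k-2$ via a distance-two addition at a deepest leaf and rerouting through the tree edge to recover a path on $k-1$ vertices --- is correct and fine as a sketch; it is the engine of the main induction that must be replaced by the internal-edge probes in the style of Claims \ref{t-2a}--\ref{t-1}.
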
 
%Note that $T_k$ is $P_k$-saturated.  
Lemma \ref{Pk} infers that  
 $T_k$ is the minimum $P_k$-saturated tree. Using the property, the following result is obtained.
\begin{theorem}\emph{(\cite{KT86})}
   If $n\ge a_k$ and $k\ge 6$, then $\sat(n,P_k)=n-\lfloor\tfrac{n}{a_k}\rfloor$.
\end{theorem}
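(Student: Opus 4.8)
The plan is to prove matching upper and lower bounds, both organized around the components of a $P_k$-saturated graph and around the fact (Lemma \ref{Pk}) that $T_k$ is the minimum $P_k$-saturated tree.

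For the lower bound, let $G$ be any $P_k$-saturated graph on $n\ge a_k$ vertices with components $C_1,\dots,C_c$ of orders $n_1,\dots,n_c$. The first observation is that each $C_i$ is itself $P_k$-saturated: it is $P_k$-free as a subgraph of $G$, and for a non-edge $uv$ with $u,v\in C_i$ the copy of $P_k$ created in $G+uv$ must use the edge $uv$ and hence lies entirely inside $C_i$. I would then split the components into tree components and cyclic ones (those containing a cycle). A cyclic component satisfies $e(C_i)\ge n_i$, i.e. it carries one edge more than a tree on the same vertex set. A tree component $C_i$ with at least one non-edge (equivalently $n_i\ge 3$) is a genuine $P_k$-saturated tree, so Lemma \ref{Pk} gives $T_k\subseteq C_i$ and thus $n_i\ge a_k$. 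The only trees with no non-edge are $K_1$ and $K_2$, which I would dispose of separately: if $G$ has a $K_1$ component $v$, then adding $vw$ for every other $w$ forces $w$ to be the endpoint of a path on $k-1$ vertices, so every other vertex has eccentricity $k-2$ inside its component; in a tree of diameter $k-2$ this is impossible once $k\ge 6$ (the center has strictly smaller eccentricity), so all other components are cyclic and $e(G)\ge n-1$. A $K_2$ component is handled the same way (it forces eccentricity $\ge k-3$ everywhere else, again impossible for $k\ge 6$). In the remaining generic case, writing $t$ for the number of tree components and $p$ for the number of cyclic ones ($c-p=t$), I compute $e(G)=\sum_i e(C_i)\ge \sum_i(n_i-1)+p=n-t$; since each of the $t$ tree components has at least $a_k$ vertices, $t\cdot a_k\le n$ and so $t\le \flr{n/a_k}$, giving $e(G)\ge n-\flr{n/a_k}$.

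For the upper bound I would exhibit a $P_k$-saturated graph attaining this value. Write $n=q\,a_k+r$ with $q=\flr{n/a_k}$ and $0\le r<a_k$, and take the disjoint union $G^*$ of $q$ trees: if $r=0$, take $q$ copies of $T_k$; otherwise take $q-1$ copies of $T_k$ together with one $P_k$-saturated tree on $a_k+r$ vertices. This uses the fact that a $P_k$-saturated tree exists on every order $m\ge a_k$, grown from $T_k$ by attaching short pendant branches at the deepest layer so as to preserve both $P_k$-freeness and saturation. Then $G^*$ is a forest with exactly $q=\flr{n/a_k}$ components, hence $e(G^*)=n-\flr{n/a_k}$. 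It remains to check that $G^*$ is $P_k$-saturated: it is $P_k$-free because each component is; for a non-edge inside a component saturation is inherited; and for a non-edge $uv$ joining two components the key point is that each tree used has radius $\ceil{(k-2)/2}$ (its diameter being $k-2$), so the longest path ending at $u$, the edge $uv$, and the longest path starting at $v$ together have at least $2\ceil{(k-2)/2}+1\ge k-1$ edges, creating a $P_k$. Thus $G^*\in\Sat(n,P_k)$ and the two bounds coincide.

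The main obstacle is the lower-bound step that every tree component has at least $a_k$ vertices. Its generic form is precisely Lemma \ref{Pk}, but the argument genuinely needs the degenerate tiny components ($K_1$, $K_2$, and any tree that is only vacuously saturated) to be excluded or absorbed; the clean way is to show, using $k\ge 6$, that such a component forces every other component to contain a cycle, so that the missing tree-edges are repaid by cycle-edges and the bound $e(G)\ge n-1\ge n-\flr{n/a_k}$ still holds. On the construction side the corresponding delicate point is the growing claim that $P_k$-saturated trees of every order $m\ge a_k$ exist while keeping the radius equal to $\ceil{(k-2)/2}$, which is exactly what makes the disjoint union globally saturated.
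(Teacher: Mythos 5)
Your proposal is correct and takes essentially the same route as the paper: the paper only cites this theorem from K\'{a}szonyi--Tuza, but its own proofs of the analogous results (Lemma \ref{G0saturated} and Theorem \ref{saturationnumber2}) use exactly your decomposition --- cyclic components contribute at least $n_i$ edges, nontrivial tree components are themselves saturated and contain $T_k$ (hence have order at least $a_k$), and the extremal graph is a disjoint union of one larger saturated tree with copies of the minimum tree, glued by the radius/eccentricity argument for cross-component non-edges. One small caution on your growth step: the pendant vertices must be attached so that the \emph{new leaves} lie in the deepest layer (i.e., attached to vertices of the penultimate layer); attaching them to vertices already in the deepest layer would create a path on $k$ vertices and destroy $P_k$-freeness.
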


It is unsurprising that $T_k$ is also $\{K_3,P_k\}$-saturated. The natural question is to determine the exact value of $\sat(n,\{K_3,P_k\})$. By means of  the tool ``layers'',  we can establish a statement of $\{K_3,P_k\}$-saturated trees that is analogous to Lemma \ref{Pk}.

\begin{lemma}\label{saturatedtree}
	For $k\ge10$, if $T$  is a $\{K_3,P_k\}$-saturated tree and not a star then $T_k^0\subseteq T$ or $T_k^1\subseteq T$, moreover, $e(T^0_k)> e(T^1_k)$.
\end{lemma}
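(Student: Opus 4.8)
The plan is to exploit that a tree is automatically $K_3$-free, which collapses the hypotheses. First I would record the two reductions that drive everything. Since $T$ is a tree it contains no $K_3$, so ``$\{K_3,P_k\}$-free'' is equivalent to ``$P_k$-free'', which for a tree means $\diam(T)\le k-2$ (its longest path has at most $k-1$ vertices). For the saturation condition, any non-edge $uv$ with $d_T(u,v)=2$ automatically closes a triangle, so such pairs impose no constraint; the genuine requirement is that for every pair $u,v$ with $d_T(u,v)\ge 3$ the graph $T+uv$ contains a $P_k$. I would phrase the longest path created by such a chord as the concatenation of a longest path of $T$ ending at $u$ with a vertex-disjoint longest path of $T$ starting at $v$, so that ``$T+uv\supseteq P_k$'' becomes a statement about how deeply one can route disjoint paths out of $u$ and $v$.

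Next I would pin down the diameter, showing $\diam(T)\in\{k-3,k-2\}$. The upper bound is $P_k$-freeness; for the lower bound I would argue that if $\diam(T)\le k-4$ then, because the branches hanging off a diametral path would be too shallow to route a long path through a newly added chord, a suitable pair $u,v$ at distance $\ge 3$ can be chosen for which $T+uv$ is still $P_k$-free, contradicting saturation. The point of isolating these two diameters is that they match the layer counts of the target trees exactly: $\diam=k-3$ gives $\lceil\tfrac{k-2}{2}\rceil$ layers, the layer count of $T_k^0$, while $\diam=k-2$ gives $\lfloor\tfrac k2\rfloor$ layers, the layer count of $T_k^1$. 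Since $T_k^1$ has diameter $k-2$, it cannot embed when $\diam(T)=k-3$, which already forces the correct branch of the dichotomy in each case.

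The core is the structural analysis, which I would run by rooting $T$ at its $1$-layer and arguing layer by layer that enough branching is forced, using a \emph{bad non-edge} principle: if an inner-layer vertex had too small a degree, or if some subtree were too shallow, then two vertices at distance $\ge 3$ could be joined without creating a $P_k$ (there would be no room to route a long path through the new edge), contradicting saturation, while $P_k$-freeness simultaneously caps how deep and how branched $T$ can be. In the case $\diam(T)=k-3$ this pins each inner-layer vertex to degree $3$ and the penultimate layer to degree-$2$ vertices, with exactly $\phi(k)$ centre vertices, which is precisely the defining data of $T_k^0$, giving $T_k^0\subseteq T$; in the case $\diam(T)=k-2$ the same machinery produces the finer pattern of $T_k^1$, namely degree-$3$ branching through the first $\lfloor\tfrac k2\rfloor-3$ layers, the prescribed $\theta(k)$ exceptional vertices in the next two layers, and the internally disjoint paths from those exceptional vertices down to the $1$-layer. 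I expect this to be the main obstacle: getting the parity corrections $\phi(k),\theta(k)$ and the internally-disjoint-paths condition to \emph{fall out} of the extremal argument, rather than being imposed by hand, is the delicate part, and it is where the precise worst-case chord must be identified.

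Finally, the inequality $e(T_k^0)>e(T_k^1)$ is a short computation. As both are trees, $e(T_k^0)-e(T_k^1)=a_k^0-a_k^1$, and substituting the closed forms gives $a_k^0-a_k^1=3\cdot 2^{t-4}-4$ when $k=2t$ and $a_k^0-a_k^1=3\cdot 2^{t-3}-6$ when $k=2t+1$. Since $k\ge 10$ forces $t\ge 5$ in both parities, each expression is strictly positive, which completes the proof.
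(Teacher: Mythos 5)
Your reductions are correct and match the paper's setup: a tree is automatically $K_3$-free, non-edges at distance $2$ close a triangle, so saturation only bites on pairs at distance $\ge 3$, which must create a $P_k$; and the diameter dichotomy $\diam(T)\in\{k-3,k-2\}$ is exactly the paper's first step (the paper proves the lower bound by adding the chord $v_1v_4$ on a diametral path and checking that both candidate longest paths through it are too short when $\diam(T)\le k-4$). Your closing computation is also right and agrees with the paper's ``direct calculation'': $a_k^0-a_k^1=3\cdot 2^{t-4}-4$ for $k=2t$ and $3\cdot 2^{t-3}-6$ for $k=2t+1$, both positive for $t\ge 5$ (and, reassuringly, zero at $k=9$, explaining the threshold $k\ge 10$).

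The genuine gap is that the heart of the lemma --- the layer-by-layer forcing --- is only announced, not performed, and you explicitly flag it as the unresolved ``main obstacle.'' That forcing argument \emph{is} the proof: the paper labels the diametral path with symmetric subscripts $v_{(t-1)1}\cdots v_{11}v'_{11}\cdots v'_{(t-1)1}$, first shows (Claims 1 and 3) that every maximal path $P^l$ from a central vertex to a leaf has order at least $t-1$, by adding a chord from the successor of the branch point of a putative short branch to the diametral path and verifying that the two longest paths through the new edge have length $<k-1$; it then inducts on $j=t-i$ (resp.\ $j=t-i+1$), adding chords of the form $w_{i1}w_{(i-3)1}$, to extract at each layer-$i$ vertex a root-path of length $t-i-1$ in the $\diam=k-3$ case, and of length $t-i+1$ or $t-i$ in the $\diam=k-2$ case (Claims 2 and 4). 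In particular, the issue you worried about --- making $\phi(k)$, $\theta(k)$ and the internally disjoint paths of $T_k^1$ ``fall out'' rather than be imposed --- is resolved in the paper precisely by this two-valued root-path length ($t-i+1$ or $t-i$) together with the symmetry between $v_{11}$ and $v'_{11}$, and none of that mechanism appears in your proposal. One smaller misstatement: you say the argument ``pins each inner-layer vertex to degree $3$,'' but only lower bounds (degree at least $3$, resp.\ at least $2$) can be forced and only lower bounds are needed, since the conclusion is the containment $T_k^0\subseteq T$ or $T_k^1\subseteq T$, not a characterization of $T$.
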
 

\begin{lemma}\label{pksaturated}
    $T^0_k$ and $T^1_k$ are $\{K_3,P_k\}$-saturated.
\end{lemma}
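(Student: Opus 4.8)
The plan is to verify the two defining properties of $\{K_3,P_k\}$-saturation separately for each of $T^0_k$ and $T^1_k$: first that the tree is $\{K_3,P_k\}$-free, and then that adding any non-edge produces a $K_3$ or a $P_k$. For freeness, note that a tree contains no cycle and hence no $K_3$. For $P_k$-freeness I would invoke the diameters recorded in Section~2 together with the fact that a longest path in a tree is a diametral path: since $\diam(T^0_k)=k-3$, the longest path of $T^0_k$ has exactly $k-2$ vertices, and since $\diam(T^1_k)=k-2$, the longest path of $T^1_k$ has exactly $k-1$ vertices. Both counts are strictly less than $k$, so neither tree contains $P_k$, and both are $\{K_3,P_k\}$-free.

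For saturation, let $T$ denote either tree and take $uv\in E(\overline T)$, so $d_T(u,v)\ge 2$. If $d_T(u,v)=2$, then $u$ and $v$ have a common neighbour $w$, and $T+uv$ contains the triangle on $\{u,w,v\}$; this settles every distance-two non-edge at once. It remains to handle $d_T(u,v)=d\ge 3$, where no triangle appears and I must produce a $P_k$ in $T+uv$. Since $T$ is already $P_k$-free, any such path must use the new edge $uv$, so the task is genuinely to route a $k$-vertex path through $uv$.

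For the construction I would write the unique $u$–$v$ path in $T$ as $x_0x_1\cdots x_d$ with $x_0=u,\ x_d=v$, and let $S_0,\dots,S_d$ be the branches obtained by deleting the edges of this path, where $x_\ell\in S_\ell$. Adding $uv$ creates the cycle $x_0x_1\cdots x_dx_0$, and every path through $uv$ has the shape [longest path inside $S_j$ ending at $x_j$]$+\,x_j\cdots x_0\,+\,uv\,+\,x_d\cdots x_i\,+$[longest path inside $S_i$ starting at $x_i$] for some $0\le j<i\le d$. Writing $p_\ell$ for the number of vertices of a longest path in $S_\ell$ starting at $x_\ell$, such a path has $N(j,i)=p_j+p_i+d-(i-j)$ vertices, so the goal reduces to showing $\max_{0\le j<i\le d}N(j,i)\ge k$. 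The main leverage is the apex $x_t$ of the $u$–$v$ path (its vertex of smallest layer number): the branch $S_t$ climbs to the $1$-layer and descends a deepest branch, so $p_t$ is essentially the full height of the tree. Pairing the index $t$ with an adjacent index and using the near-regularity of the layers (degree $3$ until the last one or two layers) then forces $N(j,i)\ge k$. I would organize the argument by the location of the apex and by whether $u$ and $v$ lie in a common branch off the $1$-layer.

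The freeness step and the distance-two triangles are immediate; the hard part will be the inequality $\max_{j<i}N(j,i)\ge k$ for every distance-$\ge 3$ non-edge. The tight instances are precisely those where $u,v$ are (near-)diametral or where the $u$–$v$ path sits far from the $1$-layer, so that only a single branch is deep and the target $k$ is met with little or no slack. There one must carefully track the parities encoded in $\phi(k)$ and $\theta(k)$ and the degree-$2$ penultimate layer of $T^0_k$. I expect $T^0_k$ to be the more delicate of the two: its longest path has only $k-2$ vertices, so the rerouting through $uv$ must recover two extra vertices rather than the single one needed for $T^1_k$.
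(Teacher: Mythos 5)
Your framework is sound and is in essence the paper's own argument in different notation: writing the $u$--$v$ path as $x_0\cdots x_d$ with branches $S_\ell$, your maximization of $N(j,i)=p_j+p_i+d-(i-j)$ is a clean, uniform restatement of the paper's constructions $P_{\sat}=L_{\cdot}(u)\,uv\,L_{\cdot}(v)$ built from the longest-leaf-path functions $L(\cdot)$ and $L_{w_0}(\cdot)$, and your ``apex'' is exactly the paper's vertex $w$, the common vertex of maximum layer number on the paths from the $1$-layer to $u$ and to $v$. Your preliminary reductions --- a triangle appears for free at distance two, no triangle can appear at distance $\ge 3$, and any $P_k$ in $T+uv$ must traverse $uv$ because $\diam(T^0_k)=k-3$ and $\diam(T^1_k)=k-2$ keep both trees $P_k$-free --- are also the paper's, treated there as immediate.

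The genuine gap is that the proposal stops exactly where the content of the lemma begins: you never verify $\max_{0\le j<i\le d}N(j,i)\ge k$, and everything after the formula is a promissory note (``pairing the index $t$ with an adjacent index \dots then forces $N(j,i)\ge k$''). That verification is the entire body of the paper's proof: for each tree and each parity of $k$ it splits into the subcases $w=v_1$, $w$ nonexistent (when the $u$--$v$ path crosses both $1$-layer vertices), $w\ne v_1$ with $l(u)=l(v)$ versus $l(u)>l(v)$, and --- the genuinely tight instances for $T^1_k$ --- $l(w)=t-3$ and $l(w)=t-2$, where the generic two-deep-branch path $L(u)\,uv\,L_w(v)$ is too short and must be replaced by $v'vuL(u)$ or $vuL(u)$, i.e.\ by a pairing in which one side of your split is a one- or two-vertex stub and the other crosses the whole tree through the $1$-layer; your heuristic of pairing the apex with a deep second branch would fail there because the apex branch in those layers has no deep descent of its own. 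Moreover, the ``near-regularity of the layers'' you invoke is not available as stated: in $T^1_k$ the $\theta(k)$ special vertices make the depths $p_\ell$ of sibling branches differ by one, and in $T^0_k$ the penultimate layer has degree $2$, so the values $p_\ell$ must be computed branch by branch --- these exact counts are what the paper's length computations (e.g.\ $e(P_{\sat})=t-1+l(u)-1+t-1-(l(v)-1)+1\ge 2t=k-1$) supply. Until those case computations are actually carried out, the lemma is a plausible plan, not a proof.
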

\proof    Let $T$ be a tree and $Y$ be the set of leaves in $T$. For convenience, let $L(w)$ (resp. $L'(w)$) be the longest (resp. shortest) path start with $w$ to all vertices of $Y$, and let $L_{w_0}(w)$ (resp. $L'_{w_0}(w)$)  be the longest (resp. shortest) path start with $w$ to all vertices of $Y$ forbidding the given vertex $w_0$.  We first show that $T^0_k$ is $\{K_3,P_k\}$-saturated. Note that $diam(T^0_k)=k-3$. For each edge $uv\not\in E(T^0_k)$, we consider $T^0_k+uv$. We assume without loss of generality that  $l(u)\ge l(v)$. Clearly, we can assume that $d_{T^0_k}(u,v)\ge 3$. %$d_{T^0_k}(u,v)\ge 3$. 

 {\bf Case 1} $k=2t+1$. We observe that $T^0_k$ has $t$-layers from its definition, in particular, the $1$-layer contains a unique vertex, say $v_{1}$. Observe that $u$ and $v$ belong to these paths from some leaves to $v_1$. If $u$ and $v$ belong to  the same such path, then $l(u)\ge l(v)+3$. Let $v'$ be a neighbor of the path with $ l(v')=l(v)+1$. Then we will find a path (say $P_{\sat}$) as $L_{v}(u)uvL(v)$ with length no less than $t-1+l(v)+t-1-l(v)+l(u)-l(v')\ge 2t$, where $L_{v}(u)$ goes through the vertex $v'$ with $l(v')=l(v)+1$. Hence, we assume that $u$ and $v$ lie on two different paths start with $v_1$. Let $w$ be the common vertex with maximum layer number of two paths start with $v_{1}$ and containing respectively $u$ and $v$.

Suppose $w=v_1$.  We now consider the case $l(u)> l(v)$. 
We observe that $T^0_k+uv$ contains a path  $P_{\sat}$ through $uv$ as $L_{v}(u)uvL_{v'}(v)$, where $v'$ is a neighbor of $v$ with $l(v')=l(v)-1$. Note that $e(P_{\sat})=e(L_{v'}(v))+e(L_{v}(u))+1=t-1+l(u)-1+t-1-(l(v)-1)+1\ge 2t=k-1$. 

Assume that $l(u)=l(v)$. Evidently, $l(u)\ge 3$.
let $u'$ be the neighbor of $u$ with $l(u')=l(u)-1$, which  is distinguished with $v_1$.  Hence, $T^0_k+uv$ has a path  $P_{sat}$ through $uv$ as $L'_{u}(u')u'uvL_{u'}(v)$. 
We observe that $e(P_{\sat})=e(L'_{u}(u'))+e(L_{u'}(v))+2=t-1+l(v)-1+t-1-(l(u')-1)+2\ge 2t=k-1$.

We now suppose $w\neq v_1$. If $u$ and $v$ belong to two different layers, then $l(u)>l(v)$. 
Similarly, we also find a path $P_{\sat}=L(u)uvL'(v)$ with order at least $2t+1$ in $T^0_k+uv$. If $l(u)=l(v)$, then  $T^0_k+uv$ includes a path $P_{\sat}=L_{u}(u')u'uvL(v)$ with order at least $2t+1$, where $u'$ is a neighbor of $u$ with $l(u')=l(u)-1$.

 {\bf Case 2} $k=2t$.  We observe that $T^0_k$ has $(t-1)$-layers from its definition, in particular, the $1$-layer contains two  vertices, say $u_1$ and $v_{1}$. It is trivial 
 for $d_T(u,v)=2$, so assume  $d_T(u,v)\ge 3$.  Assume that $u$ and $v$ are lying on the same shortest path $P_{uv} $ start with $u_1$ or $v_1$, say $u_1$. So $l(u)\ge l(v)+3$. We find  a path $P_{sat}=L(v)vuv'L_{v}(u)$ with order at least $2t$ in $T^0_k+uv$. Hence, we assume that $u$ and $v$ are lying on two different shortest paths start with $u_1$ or $v_1$.   Let $w$ be the common vertex with maximum layer number of two shortest paths start with $u_{1}$ forbidding $v_{1}$ (resp. $v_{1}$ forbidding $u_1$) and containing respectively $u$ and $v$.  Observe that $w$ does not exist if the unique path connecting $u$ and $v$ goes through $u_1$ and $v_1$. 

We first consider the case that $w$ does not exist in $T^0_k$. Without loss of generality, assume that $d_{T^0_k}(u,u_1)<d_{T^0_k}(u,v_1)$.   $T^0_k+uv$ includes a path $P_{\sat}=L_{v_1}(u)uvL_{u_1}(v)$ with order at least $2t$.

We next consider the case that $w$  exists in $T^0_k$ and assume  without loss of generality that $d_{T^0_k}(u_1,w)< d_{T^0_k}(v_1,w)$.
If $l(u)=l(v)$, then we will deduce that there is a $P_{\sat}=L'_{u}(u')u'uvL(v)$ with order at least $2t$ in $T^0_k+uv$, where $u'$ is a neighbor of $u$ with $l(u')=l(u)-1$.

If $l(u)>l(v)$, then we will find a  $P_{\sat}=L(u)uvL_{w}(v)$ with order at least $2t$ in $T^0_k+uv$.

Combining the two cases above, we are done as required. 

We now prove that $T^1_k$ is $\{K_3,P_k\}$-saturated. Evidently, $diam(T^1_k)=k-2$. We  consider $T^1_k+uv$ for each $uv\neq E(T^1_k)$. In fact, we can assume that $d_{T^1_k}(u,v)\ge 3$ and $l(u)\ge l(v)$.

{\bf Case 1} $k=2t$.
Observe that $T^1_k$ has $t$ layers, in particular, the $1$-layer contains a unique vertex, say $v_1$.
Observe that $u$ and $v$ belong to these paths from some leaves to $v_1$. If $u$ and $v$ lie on the same such path, then $l(u)\ge l(v)+3$,  then we will find a path $P_{\sat}=L_{v}(u)uvL(v)$ with order at least $2t$. Hence, we assume that $u$ and $v$ lie on two different paths start with $v_1$. Let $w$ be the common vertex with maximum layer number of two paths start with $v_{1}$ and containing respectively $u$ and $v$.  

Provided that $w=v_1$, then we will get a path $P_{\sat}=L(u)uvL_{w}(v)$ with order at least $2t$ in $T^1_k+uv$.

If $w\neq v_1$, then $2\le l(w)\le t-2$.  We thus find a path $P_{\sat}=L(u)uvL_{w}(v)$ in $T^1_k+uv$ for  $l(w)\le t-4$. We now consider the special case $l(w)\ge t-3$. Hence, $t-3\le l(w)\le t-2$. When $l(w)=t-3$, we will find a path $P_{\sat}=v'vuL(u)$, where $v'$ is a neighbor of $v$ distinguished with $w$ if $w$ is also a neighbor of $v$.
%either  $P_{\sat}=L'(v)vuL(u)$ with $l(u)>l(v)$ or $P_{\sat}=v'vvuL(u)$ with $l(u)=l(v)$, where $v'$ is the neighbor of $v$ with $l(v')=l(v)-1$.  
When $l(w)=t-2$, we deduce that $l(u)=t$ and $l(v)=t-1$. Hence, $T^1_k+uv$ contains a path $P_{\sat}=vuL(u)$. By direct calculation, all paths $P_{\sat}$ above have order at least $2t$.

{\bf Case 2} $k=2t+1$.

Note that $T^1_k$ has $t$-layers and  the first layer contains exactly two vertices, say $u_1$ and $v_{1}$.   We first assume that $u$ and $v$ are belonging to the same shortest path $P_{uv} $ start with $u_1$ or $v_1$, say $u_1$. Obviously, $l(u)\ge l(v)+3$. There is  a path $P_{sat}=L(v)vuL_{v}(u)$ with order at least $2t+1$ in $T^1_k+uv$. Hence, we assume that $u$ and $v$ are lying on two different shortest paths start with $u_1$ or $v_1$.   Let $w$ be the common vertex with maximum layer number of two shortest paths start with $u_{1}$ forbidding $v_{1}$ (resp. $v_{1}$ forbidding $u_1$) and containing respectively $u$ and $v$. We get an  observation that $w$ does not exist if the unique path connecting $u$ and $v$ goes through $u_1$ and $v_1$.   

We first consider the case that $w$ does not exist. It follows that
$u$ and $v$ are connected by a unique path going through $u_1$ and $v_1$. We assume without loss of generality that $d_{T^1_k}(u,u_1)<d_{T^1_k}(u,v_1)$ and $d_{T^1_k}(v,v_1)<d_{T^1_k}(v,u_1)$. Hence we deduce that  $T^1_k+uv$ contains a path $P_{\sat}=L_{v_1}(u)uvL_{u_1}(v)$ of order at least $2t+1$.

We thus assume that $w$ exists. Without loss of generality, assume that $d_{T^1_k}(w,u_1)<d_{T^1_k}(w,v_1)$. Clearly, $1\le l(w)\le t-2$. We now consider the case $1\le l(w)\le t-4$. Note that  $T^1_k+uv$ contains a path $P_{\sat}=L(u) uvL_{w}(v)$. If 
$l(w)= t-3$, then $T^1_k+uv$ has  a path $P_{\sat}=L(u)uvv')$, where $v'$ is a neighbor of $v$ distinguished with $w$ if $w$ is also a neighbor of $v$. If $l(w)=t-2$, then we deduce that $l(u)=t$ and $l(v)=t-1$. Hence, $T^1_k+uv$ contains a path $P_{\sat}=vuL(u)$. In conclude,  all paths $P_{\sat}$ have order at least $k$ by direct calculation.

%If $w=u_1$, then $T^1_k+uv$ contains a path $P_{\sat}=L(u) uvL_{w}(v)$. Evidently, the order of $P_{\sat}$ is no less than $2t+1$. Assume now that $w\neq u_1$. So $2\le l(w)\le t-2$. There is a path $P_{\sat}=L(u)uvL_{w}(v)$ for $l(w)\le t-4$. If $l(w)=t-3$, then $T^1_k+uv$ contains a path either  $P_{\sat}=L'(v)vuL(u)$ with $l(u)>l(v)$ or $P_{\sat}=v'vvuL(u)$ with $l(u)=l(v)$, where $v'$ is the neighbor of $v$ with $l(v')=l(v)-1$.  If $l(w)=t-2$, then we deduce that $l(u)=t$ and $l(v)=t-1$. Hence, $T^1_k+uv$ contains a path $P_{\sat}=vuL(u)$. In conclude,  all paths $P_{\sat}$ have order at least $k$ by direct calculation. 

 Together Case 1 with Case 2, we deduce that $T^1_k$ is $\{K_3,P_k\}$-saturated. \qed

%\noindent {\bf Remark} $e(T^0_k)= e(T^1_k)$ for $k\in\{6,9\}$ and $e(T^0_k)< e(T^1_k)$  for $k\in \{7,8\}$.
%{\color{blue} for $k\in \{4,5,6,7,8,9\}$, we will list the result in section ``Conclusion"}

%\begin{theorem}\label{saturationnumber2}
%If $n\ge a^1_k$ and {\color{red}$k\ge 10$}, then $\sat( n,\{P_k,K_3\}) =n-\lfloor n/a^1_k \rfloor$.
%\end{theorem}

%By Lemma \ref{saturatedtree} and Theorem \ref{saturationnumber2}, we obtain an upper bound and a lower bound of $\sat( n,K_3\cup P_k )$.
%\begin{theorem}\label{k3vpk}
	%For {\color{red}$k\ge 10$} and $n$ sufficiently large, we have that 
    %\begin{equation}\label{twobounds}
    %2+\sat( n,\{ K_3,P_k \} )\le \sat( n,K_3\cup P_k ) \le 6+\sat( n,\{ K_3,P_k \} ).
    %\end{equation}
%\end{theorem}

%\begin{theorem}\label{saturationjoin}
	%Let $G$ be a minimum $K_1\lor F$-saturated graph, then $e(G)=(n-1)+\sat(n-1,F)$ for sufficiently large $n$ and $K_1\lor H$ is a minimum $K_1\lor F$-saturated graph, where $H$ is a minimum $F$-saturated graph.
%\end{theorem}

\section{The proofs of Lemma \ref{saturatedtree}, Theorem  \ref{saturationnumber2} and Theorem \ref{k3vpk} }
In this section, we first prove Lemma \ref{saturatedtree}. And then by using the property of the minimum $\{ K_3,P_k\}$-saturated tree, we  show Theorem  \ref{saturationnumber2} and Theorem \ref{k3vpk}. %没有引用

  For convenience, we introduce some notation. Let $T$ be a tree with $diam(T)=s\ge3$. Then $T$ has a longest path $P_{s+1}$, set $P_{s+1}=v_{\lceil\frac{s+1}{2}\rceil}\cdots v_{21}v_{11} v'_{11}v'_{21}\cdots uv'_{\lceil\frac{s+1}{2}\rceil}$, in particular, $v_{11}$ is identified with  $v'_{11}$ for odd $s+1$. Let $P^l$ (resp. $P^{l'}$) be the unique shortest path  start with  $v_{11}$ forbidding $v'_{11}$ (resp. start with  $v'_{11}$ forbidding $v_{11}$)   end with some leaf of $T$ with order $l$ (resp. $l'$). Recall that  each vertex of $T$ can be divided into  $\lceil\frac{s+1}{2}\rceil$ layers according to the distance from it to $v_{11}$ or $v'_{11}$, moreover, it is lying on some path $P^l$ (or $P^{l'}$). In addition, let  %P_r=u_1u_2\cdots u_{r}$ and
  $P_{r_1}=u_1u_2\cdots u_{r_1}$ and $P_{r_2}=w_1w_2\cdots w_{r_2}$ be two paths. We call $P_{r_2}$ is a \emph{root-path} of $P_{r_1}$ at vertex $u_{i}$ if the two paths are only intersected at $w_1$ and some $u_{i}$.
We remark that  if we are to use the two types paths $P^{l}$ and $P^{l'}$ (If they exist simultaneously.)  to discuss the structure of a tree, then by symmetry, it suffices to use $P^{l}$ alone.

 \vspace{1mm}
 
\textbf{Proof of Lemma \ref{saturatedtree}:}
Let $T$ be a $\{K_3,P_k\}$-saturated tree and not a star and $ diam(T)=s$. Evidently, $3\leq s\leq k-2$. Let $P_{s+1}=v_1v_2 \cdots  v_{s+1}$  be a longest path of $T$. %需要统一符号 % $s\ge3$, otherwise, there is no $v_4$. Hence, $k\ge 7$. 
Hence, all vertices of $T$ can be partitioned into $\lceil\frac{ s+1}{2}\rceil$ layers such that the middle two vertices (or a unique vertex) of $P_{s+1}$ will belong to the $1$-layer.
We first verify the fact $k-3\le s\le k-2$ for $k\ge 5$. We observe that it holds trivially for $k=5,6$. Hence, we next assume $k\ge 7$. %% 下面的证明过程需要保证$t-1\ge 4$. 因此 $k\ge 10$
	We  assume to the contrary that $s\le k-4$. 
	 Since $T$ is $\{K_3,P_k\}$-saturated, $T+v_1v_4$ contain a copy of $P_k$, denoted by $P'_k$, and $v_1v_4\in E(P'_k)$. It follows that $P_{s+1}$ contains either a root-path %%在前面给出 定义 of a root-path
	  start with $v_2$ with length at least $k-1-(s-2)$ or a root-path  start with $v_2$ with length at least $k-1-(s-1)$ in $T$, where they both are different with the subpath $v_4\cdots v_{s+1}$. For the two cases, we thus obtain a path in $T$ with length at least $k-2$, a contradiction.  Based on the claim, we will take two cases to show our conclusion for $k\ge 10$.

	  %{\color{blue} Let $T$ be a $\{P_k,K_3\}$-saturated tree. Evidently, $diam(T)\leq k-2$. We first verify the fact $k-3\le diam(T)\le k-2$. We to the contrary assume that $diam(T)=s\le k-4$, then $T$ contains a longest path $P_{s+1}=v_1v_2 \ldots v_{s+1}$. Note that $T+v_1v_{s+1}$ contains a path with length $k$ as a copy of $P_k$ by our assumption. Hence, there are two vertices on $P_{s+1}$ as $x_i$ and $x_j$ such that there two subpaths respectively start with $x_i$ and $x_j$ and having }\qed

{\bf Case 1} 
$s=k-3$. % The descriptio of T can be partitioned into $\frac{diam(T)}{2}$, and root-path and definition of $P^l$ should be added before the proof of Lemma 2.1

%% For the case, we should show all vetitces with layer number $i\le t-3$ lie on at least two paths $P^{t-1}$ (or $P^{{t-1}'}$). In fact, there are some vertices with layer number $i= t-2$ maybe have degree two. 

We  show $T^0_k\subseteq T$ by the parity of $k$.  Observe that it suffices to show that each vertex belonging to the $i$-layer has degree at least three  for $i\in [1,\lceil\frac{ s+1}{2}\rceil-3]$ and degree at least two for $i=\lceil\frac{ s+1}{2}\rceil-2$.

{\bf Subcase 1.1 }
$k=2t$.

Observe that $s+1=k-2=2t-2$ for the subcase. For  notational  convenience, we  relabel  all vertices of  $P_{s+1}$ by using symmetric subscripts as $v_{(t-1)1}\cdots v_{21}v_{11} v'_{11} v'_{21}\cdots v'_{(t-1)1}$. 
%Formally, all vertices of $T$ can be partitioned into $t-1$ layers such that $v_{11}$ and $v'_{11}$ belong to the $1$-layer and  each vertex in the $i$-layer has the shortest path to $v_{11}$ or $v'_{11}$ with length $i-1$.
Observe that all vertices of $T$ can be partitioned into $t-1$ layers such that $v_{11}$ and $v'_{11}$ belong to the $1$-layer. 
In order to show $T^0_k\subseteq T$, it is sufficient to verify 
that for $i\in[1,t-3]$ each vertex of the $i$-layer is as the  common vertex with maximum layer number of at least two paths  having length $t-2$ start with $v_{11}$ (or $v'_{11}$) and end with some leaves.  %%这一段需要重写，
%Let $P^{l}$ (resp. $P^{l'}$)be a shortest path of $T$  start with $v_{11}$  (resp. $v'_{11}$) excluding $v'_{11}$ (resp. $v_{11}$) end with some leaf. Clearly, $l\le t-1$.  In the remaining part when we finish the proof through $P^{l}$ and $P^{l'}$, we always omit the case of $P^{l'}$ by the symmetry. 
Clearly,  $|P^{l}|$ and $|P^{l'}|$ are no more than $t-1$.
\begin{claim}\label{t-2a}
Suppose $P^{l}$ or $P^{l'}$ is a path of $T$. Then $l,l'\ge t-1$.
\end{claim}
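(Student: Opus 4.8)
The plan is to argue by contradiction through the saturation hypothesis, and by the symmetry remark it suffices to treat $P^{l}$. We already know $|P^{l}|\le t-1$, so write $P^{l}=v_{11}x_2\cdots x_l$ with leaf endpoint $y:=x_l$ lying on the $l$-layer, and suppose for contradiction that $l\le t-2$, i.e. $y$ fails to reach the bottom layer $t-1$. First I would pick the deep endpoint $z=v'_{(t-1)1}$ of the longest path on the opposite side. Since $d_{T}(y,z)=(l-1)+1+(t-2)=l+t-2\ge 3$ (because $k\ge 10$ forces $t\ge 5$), the vertices $y$ and $z$ have no common neighbour, so $T+yz$ is triangle-free; as $T$ is $\{K_3,P_k\}$-saturated and $P_k$-free, $T+yz$ must then contain a copy of $P_k$ that uses the new edge $yz$.

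Next I would analyse the created path by decomposing it as $A\,y\,z\,B$, where $A$ is a path of $T$ ending at $y$, $B$ is a path of $T$ starting at $z$, and $A,B$ are vertex-disjoint. Since $y$ is a leaf, $A$ is forced to run $y,x_{l-1},\dots,x_2,v_{11}$ before it can branch, and from $v_{11}$ it can descend only to layer $t-1$; using the layer partition this yields $|A|\le l+(t-2)$. Symmetrically $B$ begins at the layer-$(t-1)$ vertex $z$ and, having to avoid the vertices already used by $A$, can climb only within the $v'$-side, which bounds $|B|$ in terms of the layers it can still reach. Summing these bounds together with the edge $yz$, I would show that the total order of the path is strictly less than $k=2t$ whenever $l\le t-2$, contradicting the existence of the forced $P_k$. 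Hence $l\ge t-1$, and combined with $|P^{l}|\le t-1$ this forces $l=t-1$; by the symmetric argument $l'\ge t-1$ as well.

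The hard part will be the simultaneous bounding of the two arms $A$ and $B$: the entire argument hinges on the fact that, being vertex-disjoint, $A$ and $B$ cannot both descend to the bottom layer through the common vertex $v_{11}$, so that the shallowness $l\le t-2$ of $y$ genuinely caps $|A|$ while disjointness prevents $B$ from compensating. Making this rigorous requires choosing the deep target $z$ so that the forced initial segment $y\cdots v_{11}$ of $A$ and the arm $B$ truly compete for the same deep subtrees, and then accounting for every branch hanging off the relevant common vertices rather than counting along one fixed path. I expect this branch-accounting to be the principal obstacle, and it is precisely the point at which the parity split (here $k=2t$, as opposed to $k=2t+1$) and the exact definition of the layers must be invoked.
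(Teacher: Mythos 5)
Your overall template (add a missing edge, use saturation to force a $P_k$ through it, then bound the two arms) matches the paper's, but your choice of edge is fatally wrong, and the bound you hope to prove is false. Connecting the short leaf $y=x_l$ to the far deep endpoint $z=v'_{(t-1)1}$ gives the new edge too much reach: the arm ending at $y$ is \emph{not} confined as you claim. Concretely, suppose $P^{l}=v_{11}x_2\cdots x_l$ leaves the longest path immediately after $v_{11}$ (i.e. $x_2\notin V(P_{s+1})$, which is perfectly possible). Then $T+yz$ contains the path $v_{(t-1)1}\cdots v_{21}v_{11}x_2\cdots x_l\,z\,v'_{(t-2)1}\cdots v'_{11}$, of order $(t-1)+(l-1)+(t-1)=2t+l-3$, which is at least $2t=k$ as soon as $l\ge 3$. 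Since $k\ge 10$ gives $t\ge 5$, the range $3\le l\le t-2$ is nonempty, so for such $l$ the saturation requirement at $yz$ is genuinely satisfied by $T$ itself and no contradiction can be extracted from this edge, no matter how carefully you do the branch-accounting. (Your subsidiary assertion that $A$ ``is forced to run $y,x_{l-1},\dots,x_2,v_{11}$ before it can branch'' is also incorrect --- $A$ may turn off at any internal $x_i$ of degree $\ge 3$ --- but the quantitative failure above is the decisive one: the two arms can jointly use both halves of $P_{s+1}$ plus the off-path segment, totalling $\ge k$.)

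The paper avoids exactly this by adding a \emph{short-range} edge near the point where $P^{l}$ leaves the longest path: with $i_0$ the maximal index such that $w_{i_01}\in V(P_{s+1})$, it considers $T+w_{(i_0+1)1}v_{(i_0+2)1}$, joining two vertices at distance $3$. Any path through this chord-like edge is forced into one of two configurations: either $w_{l1}\cdots w_{(i_0+1)1}v_{(i_0+2)1}v_{(i_0+1)1}\cdots v_{11}v'_{11}\cdots v'_{(t-1)1}$, of order $l+t+1\le 2t-1$ when $l\le t-2$, or $v_{(t-1)1}\cdots v_{(i_0+2)1}w_{(i_0+1)1}w_{i_01}\cdots v_{11}v'_{11}\cdots v'_{(t-1)1}$, of order $2t-2$; both fall short of $k=2t$, and since the endpoints are at distance $3$ no triangle arises either, contradicting saturation. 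The lesson is that the added edge must be local (a detour that wastes length) rather than a long chord between the two extremes of the tree; if you want to salvage your write-up, you should replace the target $z$ by the on-path vertex $v_{(i_0+2)1}$ and redo the two-arm enumeration as above.
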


\proof We assume to the contrary that $T$ contains a path $P^{l}=w_{11}(=v_{11})w_{21}\cdots w_{l1}$ with $d(w_{l1})=1$ and $l\le t-2$ which is not a subpath of $P_{s+1}$.  Let $i_0$ be the maximal subscript such that $w_{i_01}\in V(P_{s+1})$. We now consider $T+w_{(i_0+1)1}v_{(i_0+2)1}$. Note that it contains a copy of $P_k$ or $K_3$  by our assumption.  In fact, there is no triangle. Hence, $T+u_{(i_0+1)1}v_{(i_0+2)1}$ contains a copy of $P_k$. Observe that the all longest possible paths through $w_{(i_0+1)1}v_{(i_0+2)1}$ are $w_{l1}\cdots w_{(i_0+1)1}v_{(i_0+2)1}v_{(i_0+1)1}\cdots v_{11}\cdots v'_{11}$ and $v_{t1}\cdots v_{(i_0+2)1}u_{(i_0+1)1}u_{(i_0)1}\cdots v_{11}\cdots v'_{11}$.
Evidently, the lengths of these two paths are less than $k-1$, it follows that $T+u_{(i_0+1)1}v_{(i_0+2)1}$ does not contain a copy of $P_k$, a contradiction. 
Therefore, we complete the proof by the symmetry of $P^{l}$ and $P^{l'}$. \qed

By Claim \ref{t-2a}, we deduce that each path $P^{l}$ (or $P^{l'}$) has order $t-1$. We next show that  every path $P^{l}$ (or $P^{l'}$) has a root-path with length $t-2-(i-1)$ at  some vertex in the $i$-layer for $i\in[1,t-3]$.

 \begin{claim}\label{t-2b}
Each path $P^{l}$ \emph{(or  $P^{l'}$)} has a root-path at $u_{i1}$ with length $t-i-1$ for $i\in [1,t-3]$.
\end{claim}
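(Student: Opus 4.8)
The plan is to first reduce Claim \ref{t-2b} to a purely local degree condition, and then to establish that condition by contradiction through a single carefully placed edge. By Claim \ref{t-2a} every path from $v_{11}$ to a leaf has order $t-1$, so all leaves of $T$ sit in the $(t-1)$-layer and every maximal descending path issuing from a vertex of the $j$-layer has length exactly $t-1-j$. In particular, if $u_{i1}$ has any downward neighbour other than $u_{(i+1)1}$, the descending path through it is automatically a root-path of length $t-1-i$. Thus Claim \ref{t-2b} is equivalent to the statement that $u_{i1}$ has at least two children, i.e. $d(u_{i1})\ge 3$, for every $i\in[1,t-3]$.

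To prove this I would argue by contradiction. Suppose $u_{(i+1)1}$ is the only child of $u_{i1}$, and let $z$ be a leaf of the subtree hanging below $u_{i1}$, so $l(z)=t-1$. I would then add a non-edge $e$ joining $z$ to a suitable ancestor $w$ of $u_{i1}$ on the segment $u_{11}u_{21}\cdots u_{i1}$ — for instance $w=v_{11}$ — chosen so that $d_T(z,w)\ge 3$ and $z,w$ have no common neighbour; then $T+e$ is triangle-free, and since $T$ is $\{K_3,P_k\}$-saturated, $T+e$ must contain a copy of $P_k$. Cutting this path $P_{\sat}$ at the new edge produces two internally disjoint arms, one ending at $z$ and one ending at $w$, and the uniform-depth consequence of Claim \ref{t-2a} (every arm length is a clean function of the layer of its highest vertex), together with $\diam(T)=s=2t-3$, lets me bound the total length. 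The point is that, because $u_{i1}$ has no second child, the arm climbing out of $z$ is trapped below $w$ and the arm leaving $w$ can reach only depth $t-2$; the resulting path would have order at most $2t-2<k$, contradicting the existence of $P_k$. The formal bookkeeping is exactly what the notations $L(w)$, $L'(w)$ and $L_{w_0}(w)$ were introduced for.

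The hard part will be ruling out all the alternative ways the forced $P_k$ could acquire its length. The arm issuing from $z$ can lengthen itself not only by climbing toward $w$ but by dipping into a branch hanging at some ancestor $u_{m1}$ (with $m<i$) or at some descendant inside the subtree below $u_{i1}$; meanwhile the arm at $w$ can borrow depth from the opposite $v'_{11}$-side. I would neutralise these by choosing the endpoint $w$ of $e$ so that blocking $w$ kills the dominant alternative branch, by selecting $z$ in a thinnest descending branch, and by performing the argument inductively on $i$ so that the layers already handled fix the ambient structure; the genuine obstacle is to starve all of these length sources simultaneously with one admissible edge. Once $u_{i1}$ is shown to have a second child for every $i\le t-3$, the symmetric statement for $P^{l'}$ follows from the symmetry of $P^{l}$ and $P^{l'}$, and assembling these root-paths layer by layer embeds $T^0_k$ into $T$.
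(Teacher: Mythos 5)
Your opening reduction is sound: by Claim \ref{t-2a} together with $\diam(T)=2t-3$, every leaf on the $v_{11}$-side lies in the $(t-1)$-layer, so a second child of $u_{i1}$ automatically spawns a root-path at $u_{i1}$ of length exactly $t-i-1$, and the claim is indeed equivalent to $u_{i1}$ having two children for $i\in[1,t-3]$. The fatal gap is in your contradiction step. Take your suggested edge $e=zv_{11}$, where $z$ is a leaf below $u_{i1}$. The arm of the forced $P_k$ issuing from $z$ is not ``trapped'' in the sense you need: it may climb past $u_{i1}$ to any ancestor $u_{m1}$ with $2\le m<i$ and descend a sibling branch hanging there --- a branch whose existence your own top-down induction has already established for earlier layers, and which by Claim \ref{t-2a} reaches depth $t-1$. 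That arm has $2(t-1-m)+1$ vertices; gluing it via $e$ to the arm $v_{11}v'_{11}\cdots$ that descends the opposite side ($t$ vertices) yields a path on $3t-2m-1$ vertices through $e$, and for $m=2$ this is $3t-5\ge 2t=k$ precisely when $t\ge5$, i.e., exactly in the regime $k\ge10$ of the lemma. So $T+zv_{11}$ contains a $P_k$ whether or not $u_{i1}$ has a second child: the added edge carries no information about $u_{i1}$, your bound ``order at most $2t-2<k$'' is false, and no contradiction arises. The same computation defeats the other natural choices of ancestor: for $w=u_{(i-2)1}$, a dip into the sibling branch at $u_{(i-1)1}$ gives $3t-i-2\ge 2t$ vertices for all $i\le t-3$. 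You flag this routing problem yourself as ``the genuine obstacle'' and leave it unresolved, so the proposal is a plan with an acknowledged hole, not a proof.

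Note also that the top-down direction of your induction makes the obstacle strictly worse, not better: each layer already handled furnishes one more full-depth alternative branch through which the forced $P_k$ can route around $u_{i1}$. The paper's proof avoids this by working bottom-up with short, local chords: it adds edges of the form $u_{i1}u_{(i-3)1}$ at distance $3$ along $P^{l}$ itself (finishing with $u_{31}v'_{11}$ to handle the $1$-layer), so that one arm of the forced $P_k$ is confined to the shallow subtree cut off below the chord, where the only way to supply the missing length is a root-path at the intermediate vertices; induction on $j=t-i$ then propagates the branching up the path. If you wish to salvage your degree-based formulation, you must localize the added edge in this way rather than joining a deep leaf to a distant ancestor.
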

 \proof  Let $j=t-i$ with $i\in [1,t-1]$. During the process of the proof, we first consider $T+w_{i1}w_{(i-3)1}$ for $i\in [4,t-1]$.  Take $j=1$, by our  assumption and $diam(T)=s$, $T+w_{(t-1)1}w_{(t-4)1}$ contains a copy of $P_k$, it follows that
 $P^{l}$ has a root-path at $u_{(t-1)1}$ with length $1$ or $0$ and a root-path at $u_{(t-2)1}$ with length $2$. We next take $j=2$, then $T+w_{(t-2)1}u_{(t-5)1}$ also contains a copy of $P_k$, which infers that 
 $P^l$ has  a root-path at $u_{(t-4)1}$ with length $3$. Based on these, by induction on $j$ we can show that  $P^l$ has a root-path at $w_{(t-j-2)1}(=w_{(i-2)1})$ with length $t-i+1$ for $3\le j\le t-4$. 
 
 Secondly, take $j=t-3$, we obtain that $P^{l}$ has  a root-path at $w_{11}$ with length $t-2$ by considering $T+w_{31}v'_{11}$. By the symmetry of $v_{11}$ and  $v'_{11}$, we can deduce that the above property of  $P^{l}$ is also valid for $P^{l'}$. Consequently, the conclusion is true. \qed

Combining Claims \ref{t-2a} and \ref{t-2b}, we thus deduce  $T^0_k\subseteq T$.

{\bf Subcase 1.2}
$k=2t+1$. %由于此时$T$的直径为s,那么$s+1=k-2=2t+1-2=2t-1$. 因此T可以分解成t层。所以子情形中的两个断言不能直接平移过来，需要处理一下。这里需要调整下

 In the subcase,  we  relabel  all vertices of  $P_{s+1}$ as $v_{t1}\cdots v_{21}v_{11} (=v'_{11}) v'_{21}\cdots v'_{t1}$  by using symmetric subscripts. %Note that $v_{11}=v'_{11}$ 
 Observe that all vertices of $T$ can be partitioned into $t$ layers such that $v_{11}$ belongs to the $1$-layer.     Using the same argument of Subcase 1.1, we can obtain that  each $P^{l}$ has order $t$ and  has the root-path property as required. We thus conclude that $T^0_k$ is a  subtree of $T$.

{\bf Case 2}
$s=k-2$.

 Observe that all vertices of $T$ can be partitioned  into $\lceil\tfrac{s+1}{2}\rceil$ layers. According to the definition of $T^1_k$,  we  verify that $T^1_k\subseteq T$ by the parity of $k$. 

{\bf Subcase 2.1}
$k=2t+1$.

For  notational  convenience, we  label  all vertices of  $P_{s+1}$ as $v_{t1}\cdots v_{21}v_{11} v'_{11} v'_{21}\cdots v'_{t1}$  by using symmetric subscripts. Recall that all vertices of $T$ can be  partitioned into $t$ layers such that $v_{11}$ and $v'_{11}$ belong to the $1$-layer and  each vertex in the $i$-layer has the shortest path to $v_{11}$ or $v'_{11}$ with length $i-1$. In order to show $T^1_k\subseteq T$, it is sufficient to verify 
that for $i\in[1,t-3]$ each vertex of the $i$-layer is lying on at least two paths start with $v_{11}$ or $v'_{11}$ and end with some leaves having length $t-2$ or $t-3$ such that it is the common vertex of these two paths  with maximum layer number, moreover, in all these paths,  at least  three paths have length $t-2$ with maximum layer number $1$. %%表述不准确
%Let $P^{l}$ (resp. $P^{l'}$)be a shortest path of $T$  start with $v_{11}$  (resp. $v'_{11}$) excluding $v'_{11}$ (resp. $v_{11}$) end with some leaf. Clearly, $l\le t$.  In the remaining part when we finish the proof through $P^{l}$ and $P^{l'}$, we always omit the case of $P^{l'}$ by the symmetry. 
Clearly, the order $l$ of $P^{l}$ and $P^{l'}$ is no more than $t$.
\begin{claim}\label{t-3}
Suppose $P^{l}$ (or $P^{l'}$) is a path in $T$ as defined above. Then $l,l'\ge t-1$. 
\end{claim}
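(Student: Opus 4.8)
The plan is to argue by contradiction, following exactly the scheme already used for Claim \ref{t-2a}. Suppose some branch $P^{l}=w_{11}(=v_{11})w_{21}\cdots w_{l1}$ of $T$, with $w_{l1}$ a leaf and $l\le t-2$, is not a subpath of $P_{s+1}$. Since $P^{l}$ forbids $v'_{11}$, its initial segment must run along the $v$-side of the longest path, so letting $i_0$ be the largest index with $w_{i_01}\in V(P_{s+1})$ we have $w_{j1}=v_{j1}$ for all $j\le i_0$, and $i_0\le l-1\le t-3$. In particular $v_{(i_0+2)1}$ still lies on $P_{s+1}$, so the non-edge $w_{(i_0+1)1}v_{(i_0+2)1}$ is available to test saturation.

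First I would check that adding this edge creates no triangle: in the tree $T$ the two endpoints lie at distance $3$, along $w_{(i_0+1)1}v_{i_01}v_{(i_0+1)1}v_{(i_0+2)1}$, so the unique cycle of $T+w_{(i_0+1)1}v_{(i_0+2)1}$ has length $4$ and the graph is still $K_3$-free. Saturation therefore forces a copy of $P_k$ through the new edge, and I would next enumerate the longest paths that can use it. As in Claim \ref{t-2a} there are essentially two candidates: one routing the branch $w_{l1}\cdots w_{(i_0+1)1}$ against the long arm $v_{(i_0+2)1}\cdots v_{11}\cdots v'_{t1}$, and one routing $v_{t1}\cdots v_{(i_0+2)1}$ against $w_{(i_0+1)1}v_{i_01}\cdots v_{11}\cdots v'_{t1}$. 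A direct count gives lengths $l+t+1$ and $2t-1$ respectively. Since $l\le t-2$, both are at most $2t-1<2t=k-1$, so no copy of $P_k$ appears, contradicting saturation. Hence $l\ge t-1$, and by the symmetry of $v_{11}$ and $v'_{11}$ the same bound holds for $l'$.

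The only delicate point is the exhaustive verification that these two displayed paths really are the longest ones through the new edge and that each is genuinely simple; for instance one must confirm that $v_{(i_0+1)1}$ and $v_{i_01}=w_{i_01}$ are not already consumed by the branch segment, and that no third routing (such as pushing both ends into the main path on the same side) can be longer. The arithmetic is otherwise routine, and it is reassuring that the threshold is sharp: taking $l=t-1$ makes the first path have length exactly $2t=k-1$, so the added edge would then create a $P_k$, which is precisely why the bound $l\ge t-1$ cannot be strengthened.
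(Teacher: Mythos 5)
Your proof is correct and takes essentially the same route as the paper's: the paper likewise supposes a branch $P^{l}=u_{11}(=v_{11})u_{21}\cdots u_{l1}$ with $l\le t-2$ not contained in $P_{s+1}$, adds the edge $u_{(i_0+1)1}v_{(i_0+2)1}$ at the last common vertex with the longest path, and bounds the same two candidate paths through the new edge by $k-2$ edges to contradict saturation. The extra details you supply --- the explicit distance-$3$ check ruling out a triangle, the exact edge counts $l+t+1$ and $2t-1$, and the sharpness remark at $l=t-1$ --- only make explicit what the paper leaves implicit, including the exhaustiveness of the two routings, which the paper also asserts without further justification.
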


\proof We prove this claim by contradiction. Assume that there is a path $P^{l}=u_{11}(=v_{11})u_{21}\cdots u_{l1}$ with $d(u_{l1})=1$ and $l\le t-2$ such that it is not a subpath of $P_{s+1}$. For convenience, let $i_0$ be the maximal subscript such that $u_{i_01}$ is also lying on the path $P_{s+1}$. We now consider $T+u_{(i_0+1)1}v_{(i_0+2)1}$. Note that it contains a copy of $P_k$ or $K_3$. Evidently, it does not contain a triangle. So $T+u_{(i_0+1)1}v_{(i_0+2)1}$ includes a copy of $P_k$. But the two longest paths through $u_{(i_0+1)1}v_{(i_0+2)1}$ are $u_{l1}\cdots u_{(i_0+1)1}v_{(i_0+2)1}v_{(i_0+1)1}\cdots v_{11}\cdots v'_{t1}$ and $v_{t1}\cdots v_{(i_0+2)1}u_{(i_0+1)1}u_{i_01}\cdots v_{11}\cdots v'_{t1}$.
Clearly, their lengths are no more than $k-2$, a contradiction.
By the above argument and the symmetry of $P^{l}$ and $P^{l'}$, we complete the proof. \qed
  
From above Claim, we get $l,l'\in\{t-1,t\}$. %We next show  that   each vertex in the $i$-layer lying on every path $P^l$ (or  $P^{l'}$) has degree at least three for $i\in [1,t-3]$ and at least two for $i=t-2$.  
Recall that  $P^{l}=u_{11}(=v_{11})u_{21}\cdots u_{l1}$.
Since the vertex $u_{(t-2)1}$ of a $P^{t}$ maybe has a leaf neighbor, say $u'_{(t-2)1}$, in fact, $u_{(t-2)1}$ is also contained in  a $P^{t-1}$. Based on the reason, when we consider the case  $P^{t-1}$,  we always assume that  $u_{(t-2)1}$ does not belong to  some $P^{t}$ by the inclusion-exclusion principle. 
We next show that for each $i\in[1,t-2]$. $P^{t}$ has a root-path at $u_{i1}$ in the $i$-layer with length $t-i+1$ or $t-i$.

 \begin{claim}\label{t}
Let $l=t$. Then $P^{l}$ has a root-path at $u_{i1}$ with length $t-i+1$ or $t-i$ for $i\in [1,t-2]$.
\end{claim}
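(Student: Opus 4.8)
The natural plan is to transplant the inductive scheme already used for Claim~\ref{t-2b} to the present setting $k=2t+1$, $s=k-2$, exploiting the saturation hypothesis through well-chosen chords. Throughout I would write $P^{l}=u_{11}(=v_{11})u_{21}\cdots u_{t1}$ with $u_{t1}$ a leaf, and set $j=t-i$ so that the induction runs from the leaf-end toward the $1$-layer. For a vertex $u_{i1}$ I add to $T$ the chord $u_{i1}u_{(i-3)1}$. Because $T$ is a tree, this chord creates a unique cycle, namely the $4$-cycle $u_{(i-3)1}u_{(i-2)1}u_{(i-1)1}u_{i1}$, so $T+u_{i1}u_{(i-3)1}$ is triangle-free; since $T$ is $\{K_3,P_k\}$-saturated, it must therefore contain a copy of $P_k=P_{2t+1}$, and that copy is forced to use the new chord because $T$ itself is $P_k$-free.

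The heart of the argument is a length analysis of the longest path through the chord. Such a path enters the $4$-cycle, traverses part of it, and leaves along $P^{l}$ toward $u_{11}=v_{11}$ and then out into the remainder of $T$ (the $v'$-side of $P_{s+1}$ contributing up to $t$ further edges). A direct count shows that, if no additional branch is available, the longest such path falls short of the length $2t$ demanded by $P_{2t+1}$. Consequently the saturation property forces a root-path at $u_{i1}$; tracking the deficit pins its length down to one of the two values $t-i+1$ or $t-i$, according to whether the branch terminates in the maximal $t$-layer or in the $(t-1)$-layer. This dichotomy is exactly the degree-$1$ versus degree-$2$ distinction that the definition of $T^1_k$ prescribes for the two outermost layers.

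I would organize the induction exactly as in Claim~\ref{t-2b}: the base cases use the chords closest to the leaf $u_{t1}$ and directly produce the deepest root-paths; each later chord, assuming the root-paths already secured at the deeper vertices $u_{(i+1)1},u_{(i+2)1},\dots$, yields the next root-path one layer nearer the $1$-layer; and the final step, where $i\le 3$, replaces $u_{i1}u_{(i-3)1}$ by a chord across the $1$-layer to $v'_{11}$ (using $\diam(T)=s$ together with Claim~\ref{t-3} to guarantee the $v'$-side is long enough) to produce the root-path rooted at $u_{11}$. By the symmetry between $P^{l}$ and $P^{l'}$ recorded in the preamble, arguing for $P^{l}$ suffices.

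The step I expect to be the main obstacle is the bookkeeping attached to the two-valued conclusion $t-i+1$ or $t-i$. Unlike the situation in Claim~\ref{t-2b}, where each forced branch had a single exact length, here at every inductive stage I must verify that, whichever of the two lengths is realized, the resulting $P_{2t+1}$ genuinely attains length $2t$ and cannot be shortened below it, so that saturation forces the branch rather than merely tolerating it; I must also confirm that the branch is rooted at the asserted vertex $u_{i1}$ and not at a neighbor. Hand in hand with this is the careful treatment of the boundary indices $i\in\{1,2,3\}$, where the chord $u_{i1}u_{(i-3)1}$ would run off $P^{l}$ or threaten to create a triangle, so that a chord reaching across to $v'_{11}$ must be used and analyzed separately.
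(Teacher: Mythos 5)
Your proposal follows essentially the same route as the paper's proof: the same chords $u_{i1}u_{(i-3)1}$ added inductively from the leaf end toward the $1$-layer (with the boundary chord $u_{31}v'_{11}$ across the $1$-layer, justified via $\diam(T)=s$ and Claim~\ref{t-3}), the same length count forcing a root-path with the two-valued length $t-i+1$ or $t-i$, and the same appeal to the symmetry of $P^{l}$ and $P^{l'}$. The only slippage is bookkeeping: in the paper the chord at $u_{i1}u_{(i-3)1}$ forces the root-path at $u_{(i-2)1}$ (two vertices nearer the root), not at $u_{i1}$ itself, but since your induction sweeps through all indices this does not change the argument.
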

 \proof % We first show that for $i\in[1,t-1]$ each vertex of $v_{i1}$ and $v'_{i1}$ is contained in another path start with  $v_{11}$ and $v'_{11}$  different with the subpath of $P_{s+1}$ with length $t-2$ or $t-3$.
 
 Let $j=t-i+1$ with $i\in [1,t]$. Similar to the argument of subcase 1.1,  we first consider $T+u_{t1}u_{(t-3)1}$ with $j=1$. By our  assumption and $diam(T)=s$, we verify that 
 $P^t$ has a root-path at $u_{(t-1)1}$ with length $1$ or $0$ and a root-path at $u_{(t-2)1}$ with length $2$ or $1$. For $j=2$, we consider $T+u_{(t-1)1}u_{(t-4)1}$ and obtain that 
 $P^t$ has  a root-path at $u_{(t-3)1}$ with length $3$ or $2$. Based on these, by induction on $j$ we can show that  $P^t$ has  a root-path at $u_{(t-j-1)1}(=u_{(i-2)1})$ with length $t-i+2$ or $t-i+1$ for $3\le j\le t-3$. 
 
 For $j=t-2$, we consider $T+u_{31}v'_{11}$ and obtain that $P^t$ has  a root-path at $u_{11}$ with length $t-1$ or $t-2$. For $j=t-1$, we deduce that $P^t$ has a root-path at $v'_{11}$ with length $t-1$. Together case $j=t-2$ and the symmetry of $v_{11}$ and  $v'_{11}$, we assume without loss of generality that there is a root-path at $u_{11}$ with length $t-1$, and there is a root-path at $v'_{11}$ with length $t-2$. \qed

\begin{claim}\label{t-1}
Let $l=t-1$. Then $P^{l}$ has a root-path at $u_{i1}$ with either length $t-i$ or $t-i-1$ for $i\in [1,t-3]$. %or  with length $1$ or $0$ for $i=t-2$.
\end{claim}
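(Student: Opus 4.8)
The plan is to reuse, almost verbatim, the inductive edge-addition scheme of Claims~\ref{t-2b} and~\ref{t}, with every target length lowered by one to account for the fact that a $P^{l}$ with $l=t-1$ has one fewer vertex than the $P^{t}$ treated in Claim~\ref{t}. I would write $P^{l}=u_{11}(=v_{11})u_{21}\cdots u_{(t-1)1}$ with $d(u_{(t-1)1})=1$, set $j=t-i$, and induct on $j$ starting from the leaf end $u_{(t-1)1}$ and moving toward the $1$-layer. As in the previous claims, the engine is that an added edge $uv$ with $d_{T}(u,v)\ge 3$ cannot create a triangle, so by saturation $T+uv$ must contain a copy of $P_k=P_{2t+1}$; since $T$ is $P_k$-free, this copy uses $uv$, and bounding the longest path through $uv$ forces the claimed root-path. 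Following the remark preceding Claim~\ref{t}, while treating the case $l=t-1$ I would assume via inclusion--exclusion that $u_{(t-2)1}$ does not already lie on a $P^{t}$, so that the lengths counted here are genuinely contributed by $P^{l}$ and not double-counted with the structure obtained in Claim~\ref{t}.

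For the base of the induction I would take $T+u_{(t-1)1}u_{(t-4)1}$ and read off, from the longest path through the new edge, a root-path at two consecutive vertices near the leaf, exactly as the single edge $u_{t1}u_{(t-3)1}$ produced conclusions at both $u_{(t-1)1}$ and $u_{(t-2)1}$ in Claim~\ref{t}. The candidate long paths branch high on $P^{l}$, run up through its last vertices, cross the added edge, descend $u_{(t-4)1}\cdots u_{11}$ to the $1$-layer, and exit through $v'_{11}$ along a longest arm of the opposite half, which has length $t-1$ by Claim~\ref{t-3} because that arm ends $P_{s+1}$. Counting edges, the total reaches $k-1=2t$ only if $P^{l}$ carries a root-path of length $t-i$ or $t-i-1$ at the relevant $u_{i1}$, which is the assertion for $i=t-3$ and $i=t-4$. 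For the inductive step $3\le j$ I would add the edge $u_{(t-j)1}u_{(t-j-3)1}$ and argue identically, pushing the branch vertex one layer lower each time; the final steps bring the branch down to $u_{11}$ and then, invoking the symmetry of $v_{11}$ and $v'_{11}$ precisely as at the end of Claim~\ref{t}, deliver the conclusion at $i=1$.

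The main obstacle will be the length bookkeeping together with the built-in slack recorded as ``$t-i$ or $t-i-1$''. The forced $P_k$ may route its long arm either through the root-path branch or through the continuation $u_{(i+1)1}\cdots u_{(t-1)1}$, so in general one can pin the root-path length down only to within one unit, and I must verify that this one-sided ambiguity stays consistent as it propagates through the induction and does not accumulate into a larger discrepancy. The delicate point is to check that each constructed path attains length exactly $2t$ and that no path through the new edge can be longer in the absence of the claimed root-path, which is what certifies that the root-path is genuinely present. This is the only substantive computation, and it runs in complete parallel to the counting already carried out for $P^{t}$ in Claim~\ref{t}, so I expect no new difficulty beyond careful management of the shifted indices.
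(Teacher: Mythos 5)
Your proposal is correct and is essentially identical to the paper's proof: the paper disposes of Claim~\ref{t-1} in a single line, invoking exactly the inclusion--exclusion convention on $P^{t-1}$ (that $u_{(t-2)1}$ is assumed not to lie on a $P^{t}$) and repeating the inductive edge-addition argument of Claim~\ref{t} with every length shifted down by one, which is precisely what you carry out. The slight index ambiguity in your base case (whether $T+u_{(t-1)1}u_{(t-4)1}$ certifies the assertion at $i=t-3,t-4$ rather than at $i=t-2,t-3$) is of the same order as the bookkeeping already tolerated in the paper's own Claims~\ref{t-2b} and~\ref{t} and does not alter the route.
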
 %% For the speical case, the vertex $u_{(t-2)1}$ maybe have degree two. Hence, we should deal with it by what way?

\proof By our convention on $P^{t-1}$ and using the same argument of Claim \ref{t}, the conclusion is true. \qed

Combining Claims \ref{t} and \ref{t-1} and the symmetry of $P_{l}$ and $P'_{l}$,  we indeed deduce  $T^1_k\subseteq T$.

{\bf Subcase 2.2}
$k=2t$.

For  notational  convenience, we  label  all vertices of  $P_{s+1}$ as $v_{t1}\cdots v_{21}v_{11} (=v'_{11})  v'_{21}\cdots v'_{t1}$  by using symmetric subscripts. Note that in the case $v_{11}=v'_{11}$, Claims \ref{t-3}, \ref{t} and \ref{t-1} still valid.   Hence by using the same argument on  the path $P^l$, 
we can deduce that $T^1_k\subseteq T$.

In addition, by direct calculation, we obtain that $e(T^0_k)>e(T^1_k)$ for $k\ge 10$. Therefore, we complete the proof. %明天重新检查下。
\qed

By means of Lemma \ref{saturatedtree}, we let $ G_0=G_1\cup G_2\cup \cdots  \cup  G_t$ and $n\equiv n_0( mod\ a_{k}^{1} )$ such that $G_1$ is a  $\{K_3,P_k\}$-saturated tree with $|V(G_1)|=n_0+a_{k}^{1}$ and $G_i$ is a copy of $T^1_k$ for $i\in \{2,3,\ldots, t\}$. Next, we will show that $G_0$ is $\{K_3,P_k\}$-saturated.

\begin{lemma}\label{G0saturated}
	$G_0$ is $\{K_3,P_k\}$-saturated and $e(G_0)=n-\lfloor n/a'_k \rfloor$.
\end{lemma}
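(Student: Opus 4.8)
The plan is to verify the two defining conditions of $\{K_3,P_k\}$-saturation for $G_0$ and then to count its edges. The edge count will be immediate: $G_0$ is a forest on $n$ vertices whose components are $G_1$ together with $t-1$ copies of $T^1_k$, so it has exactly $t$ components, and the constraints $|V(G_1)|=n_0+a^1_k$ and $n\equiv n_0\pmod{a^1_k}$ force $t=\lfloor n/a^1_k\rfloor$; since a forest with $n$ vertices and $t$ components has $n-t$ edges, this yields $e(G_0)=n-\lfloor n/a^1_k\rfloor$. Freeness is equally short: being a forest, $G_0$ contains no $K_3$, and since every component is $P_k$-free (the component $G_1$ by hypothesis and each $T^1_k$ by Lemma \ref{pksaturated}) while $P_k$ is connected, $G_0$ contains no $P_k$.

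It then remains to check saturation, i.e.\ that $G_0+uv$ contains a copy of $K_3$ or $P_k$ for every non-edge $uv$. I would split into two cases. If $u$ and $v$ lie in the same component $G_i$, then $G_i$ is itself $\{K_3,P_k\}$-saturated (by hypothesis when $i=1$, and by Lemma \ref{pksaturated} when $i\ge 2$), so $G_i+uv\subseteq G_0+uv$ already contains $K_3$ or $P_k$ and we are done. The essential case is $u\in G_i$ and $v\in G_j$ with $i\ne j$: here $u$ and $v$ have no common neighbour, so no triangle is created and I must instead produce a $P_k$. Writing $L(u)$ for a longest path starting at $u$ inside $G_i$ and $L(v)$ for one starting at $v$ inside $G_j$, the new edge yields the path $L(u)\,uv\,L(v)$ in $G_0+uv$ on $|L(u)|+|L(v)|$ vertices, so everything reduces to bounding $|L(u)|$ and $|L(v)|$ from below.

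The key estimate I would establish is that the longest path starting at an arbitrary vertex of any component has at least $\lceil k/2\rceil$ vertices. In a tree such a path has exactly $\mathrm{ecc}(u)+1$ vertices, and $\mathrm{ecc}(u)$ is at least the radius; since $\diam(T^1_k)=k-2$, the radius of $T^1_k$ is $\lceil(k-2)/2\rceil=\lceil k/2\rceil-1$, so $|L(u)|\ge\lceil k/2\rceil$ for every $u\in T^1_k$. The same bound holds for $G_1$ whenever $\diam(G_1)=k-2$, and then $|L(u)|+|L(v)|\ge 2\lceil k/2\rceil\ge k$, producing the required $P_k$.

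The one genuine obstacle I anticipate is a parity subtlety. When $k$ is odd it is possible that $\diam(G_1)=k-3$, in which case the radius bound only gives $|L(u)|\ge(k-1)/2=\lceil k/2\rceil-1$ for central $u\in G_1$, one vertex short. I would circumvent this by exploiting that $G_0$ contains only a \emph{single} copy of $G_1$: in the mixed case $i\ne j$ at least one of $G_i,G_j$, say $G_j$, is a copy of $T^1_k$ and hence contributes $|L(v)|\ge(k+1)/2$, which combines with $|L(u)|\ge(k-1)/2$ to give $|L(u)|+|L(v)|\ge k$ all the same. The inequality $\diam(G_1)\ge k-3$ needed here holds by the proof of Lemma \ref{saturatedtree} once $G_1$ is taken to be a non-star $\{K_3,P_k\}$-saturated tree; choosing $G_1$ non-star is in any case forced, since the centre of a star admits only a $P_2$ and would violate the path bound. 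With the per-vertex path estimate in hand in every case, $G_0+uv$ contains $P_k$, and the saturation verification---hence the lemma---is complete.
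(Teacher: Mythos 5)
Your proof takes essentially the same route as the paper's: $\{K_3,P_k\}$-freeness from the construction, intra-component saturation from Lemma \ref{pksaturated} together with the hypothesis on $G_1$, and the cross-component case via the concatenated path $L(u)uvL(v)$, with the edge count $n-\lfloor n/a^1_k\rfloor$ from $G_0$ being a forest with $\lfloor n/a^1_k\rfloor$ components. The paper merely asserts that this path has order at least $k$, whereas you actually prove it via the eccentricity bound $|L(u)|\ge \lceil k/2\rceil$, handle the odd-$k$ parity issue when $\diam(G_1)=k-3$, and correctly note that $G_1$ must be chosen non-star (a star \emph{is} $\{K_3,P_k\}$-saturated but would break the cross-component path bound) --- details the paper's terse proof and construction leave implicit, so your argument is, if anything, more complete.
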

\begin{proof}
	From the construction of $G_0$, we can observe that $G_0$ is $\{K_3,P_k\}$-free. Hence, we next show that $G_0+uv$ contains a copy of $P_k$ or $K_3$ for each edge $uv\in  E( \overline{G_0})$. If $u$ and $v$ belong to one component of $G_0$, then it is true from Lemma \ref{pksaturated} and Lemma \ref{saturatedtree}. Hence, we assume that  $u$ and $v$ come from two components of $G_0$. Recall that the definition of $L(u)$, there is a path as $L(u)uvL(v)$ of order at least $k$. 

    Therefore, we are done.   
\end{proof}
\textbf{Proof of Theorem \ref{saturationnumber2}:}
 For convenience, we suppose that $G$ be a minimum  $\{K_3,P_k\}$-saturated graph. From Lemma \ref{G0saturated}, we have that $e(G)\le e(G_0)$. We thus need to show $e(G)\ge e(G_0)$. We assume to the contrary that  $e(G)< e(G_0)$. If $G$ is connected, then $e(G)\ge n-1\ge e(G_0)$, a contradiction. If $G$ is disconnected and its each component contains cycles, then $e(G)\ge n> e(G_0)$, a contradiction. Hence, we assume that $G$ contains at least one component that is a tree. Formally, let $G$ contain $s$ cycle components as $G_1,G_2,\ldots,G_s$ and $l$ tree components as $G_{s+1},G_{s+2},\ldots,G_{s+l}$ with $n_0=\sum^s_{i=1}|G_i|$ and $n-n_0=\sum^{s+l}_{i=s+1}|G_i|$. Evidently, $\lfloor \tfrac{n-n_0}{a^1_k} \rfloor\ge l$.  From Lemma \ref{saturatedtree}, $T^0_k\subseteq G_i$ or $T^1_k\subseteq G_i$ for $i\ge s+1$. 

 Hence, we deduce that 
 \begin{equation*}
   \begin{split}
      e(G)&\ge \sum^s_{i=1}|G_i|+\sum^{s+l}_{i=s+1}(|G_i|-1)  \\
      & =n_0+ n-n_0-l\\
      %&\ge  n_0+ n-n_0-(n-n_0)/a^1_k\\
      &\ge n_0++ n-n_0-\left\lfloor \tfrac{n-n_0}{a^1_k}\right\rfloor\\
      %&\ge n-n/a^1_k.
      &\ge n-\left\lfloor \tfrac{n}{a^1_k}\right\rfloor.
   \end{split} 
 \end{equation*} Therefore, we finish the proof. \qed

At the end of this section, we shall show two bounds of $sat(n, K_3\cup P_k )$. We first construct a $(K_3\cup P_k)$-saturated graph. Let $H_0=Q_1 \cup Q_2\cup \cdots \cup Q_m$ where $Q_1$ contains a copy of $K_4$, denoted by $Q'_1$ and each vertex of $Q'_1$ hang a copy of $T^1_k$ (see Figure 4), $Q_2$ be a $\{K_3,P_k\}$-saturated tree, and $Q_i$ be a copy of $T^1_k$ for $i\in \{3,4,\ldots,m\}$. For convenience, we let $V(Q'_1)=\{u_1,u_2,u_3,u_4\}$ and $Q_1\setminus E(Q'_1)=Q''_1\cup Q''_2 \cup Q''_3\cup Q''_4$, where $Q''_i$ is a copy of $T^1_k$ and contains the vertex $u_i$.

\vspace{3mm}
\begin{figure}[h]\label{4}		
\begin{center}
\begin{picture}(118.4,95.4)\linethickness{0.8pt}
\Line(.4,71.6)(.6,60.2)
\Line(8.5,83.8)(.4,71.6)
\Line(0,23.6)(.3,35.1)
\Line(17.9,95.3)(29.7,95.4)
\Line(17.9,95.3)(17.9,83.9)
\Line(18.1,71.7)(18.1,60.2)
\Line(17.9,83.9)(23.6,71.7)
\Line(17.9,83.9)(18.1,71.7)
\Line(8.5,83.8)(8.6,71.7)
\Line(17.9,95.3)(8.5,83.8)
\put(32.5,33.7){$u_3$}
\Line(8.6,11.5)(8,23.6)
\Line(8.6,11.5)(0,23.6)
\Line(17.3,0)(8.6,11.5)
\Line(17.6,11.5)(17.3,23.7)
\Line(17.3,0)(17.6,11.5)
\Line(29.1,.1)(29.1,11.6)
\Line(28.8,23.7)(28.8,35.2)
\Line(29.1,11.6)(23.3,23.7)
\Line(29.1,11.6)(28.8,23.7)
\Line(29.7,95.4)(38.4,83.9)
\Line(29.4,83.9)(29.6,71.7)
\Line(29.7,95.4)(29.4,83.9)
\put(32.5,60.6){$u_1$}
\Line(38.5,11.7)(38.3,23.8)
\Line(38.5,11.7)(46.5,23.9)
\Line(29.1,.1)(38.5,11.7)
\Line(38.4,83.9)(38.9,71.7)
\Line(38.4,83.9)(46.9,71.7)
\Line(29.1,.1)(17.3,0)
\Line(46.5,23.9)(46.3,35.3)
\Line(46.1,35.2)(71.7,60.2)
\Line(46.6,60.2)(72.1,35.2)
\Line(46.1,35.2)(72.1,35.2)
\Line(46.6,60.2)(46.1,35.2)
\Line(46.6,60.2)(71.7,60.2)
\Line(46.9,71.7)(46.6,60.2)
\put(76.2,34.2){$u_4$}
\put(76,60.9){$u_2$}
\Line(80.4,11.5)(71.8,23.6)
\Line(80.4,11.5)(79.8,23.6)
\Line(71.8,23.6)(72.1,35.1)
\Line(71.7,71.5)(71.9,60)
\Line(79.8,83.6)(79.9,71.5)
\Line(79.8,83.6)(71.7,71.5)
\Line(89.2,95.2)(79.8,83.6)
\Line(71.7,60.2)(72.1,35.2)
\Line(89.1,0)(80.4,11.5)
\Line(100.9,.1)(89.1,0)
\Line(89.1,0)(89.4,11.5)
\Line(100.9,11.6)(95.2,23.7)
\Line(89.4,11.5)(89.2,23.7)
\Line(89.2,95.2)(101.1,95.2)
\Line(89.2,95.2)(89.2,83.7)
\Line(89.4,71.5)(89.4,60)
\Line(89.2,83.7)(94.9,71.5)
\Line(89.2,83.7)(89.4,71.5)
\Line(100.9,.1)(110.3,11.7)
\Line(100.9,.1)(100.9,11.6)
\Line(100.9,11.6)(100.7,23.7)
\Line(100.7,23.7)(100.6,35.2)
\Line(101.1,95.2)(109.7,83.7)
\Line(100.7,83.7)(100.9,71.5)
\Line(101.1,95.2)(100.7,83.7)
\Line(110.3,11.7)(118.4,23.9)
\Line(110.3,11.7)(110.2,23.8)
\Line(109.7,83.7)(110.2,71.5)
\Line(109.7,83.7)(118.2,71.5)
\Line(118.4,23.9)(118.1,35.3)
\Line(118.2,71.5)(117.9,60)
\put(.6,60.2){\circle*{4}}
\put(.4,71.6){\circle*{4}}
\put(.3,35.1){\circle*{4}}
\put(18.1,60.2){\circle*{4}}
\put(18.1,71.7){\circle*{4}}
\put(17.9,83.9){\circle*{4}}
\put(8.6,71.7){\circle*{4}}
\put(8.5,83.8){\circle*{4}}
\put(17.9,95.3){\circle*{4}}
\put(8,23.6){\circle*{4}}
\put(0,23.6){\circle*{4}}
\put(17.3,23.7){\circle*{4}}
\put(17.6,11.5){\circle*{4}}
\put(8.6,11.5){\circle*{4}}
\put(28.8,35.2){\circle*{4}}
\put(28.8,23.7){\circle*{4}}
\put(23.3,23.7){\circle*{4}}
\put(29.1,11.6){\circle*{4}}
\put(29.6,71.7){\circle*{4}}
\put(29.4,83.9){\circle*{4}}
\put(29.7,95.4){\circle*{4}}
\put(23.6,71.7){\circle*{4}}
\put(38.3,23.8){\circle*{4}}
\put(38.5,11.7){\circle*{4}}
\put(38.9,71.7){\circle*{4}}
\put(38.4,83.9){\circle*{4}}
\put(17.3,0){\circle*{4}}
\put(46.3,35.3){\circle*{4}}
\put(46.5,23.9){\circle*{4}}
\put(29.1,.1){\circle*{4}}
\put(46.1,35.2){\circle*{4}}
\put(46.6,60.2){\circle*{4}}
\put(46.9,71.7){\circle*{4}}
\put(80.4,11.5){\circle*{4}}
\put(79.8,23.6){\circle*{4}}
\put(71.8,23.6){\circle*{4}}
\put(72.1,35.1){\circle*{4}}
\put(71.9,60){\circle*{4}}
\put(79.9,71.5){\circle*{4}}
\put(71.7,71.5){\circle*{4}}
\put(79.8,83.6){\circle*{4}}
\put(72.1,35.2){\circle*{4}}
\put(71.7,60.2){\circle*{4}}
\put(89.1,0){\circle*{4}}
\put(89.4,11.5){\circle*{4}}
\put(95.2,23.7){\circle*{4}}
\put(89.2,23.7){\circle*{4}}
\put(89.4,60){\circle*{4}}
\put(94.9,71.5){\circle*{4}}
\put(89.4,71.5){\circle*{4}}
\put(89.2,83.7){\circle*{4}}
\put(89.2,95.2){\circle*{4}}
\put(100.9,.1){\circle*{4}}
\put(100.9,11.6){\circle*{4}}
\put(100.7,23.7){\circle*{4}}
\put(100.6,35.2){\circle*{4}}
\put(100.9,71.5){\circle*{4}}
\put(100.7,83.7){\circle*{4}}
\put(101.1,95.2){\circle*{4}}
\put(110.3,11.7){\circle*{4}}
\put(110.2,23.8){\circle*{4}}
\put(110.2,71.5){\circle*{4}}
\put(109.7,83.7){\circle*{4}}
\put(118.1,35.3){\circle*{4}}
\put(118.4,23.9){\circle*{4}}
\put(117.9,60){\circle*{4}}
\put(118.2,71.5){\circle*{4}}
\end{picture}
\end{center}
\caption{An example of $Q_1$ for $k=9$.}%\label{4}
		\end{figure}
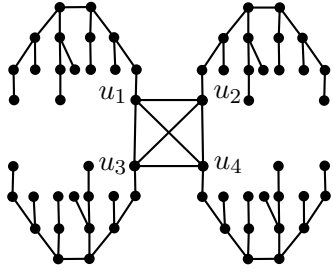

\begin{lemma}\label{H0}
	$H_0$ is  $( K_3\cup P_k )$-saturated graph and $e(H_0)=6+\sat( n,\{ K_3,P_k\} ) $.
\end{lemma}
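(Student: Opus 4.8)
The plan is to prove the two assertions separately, reducing both to one structural observation: since $Q_1'\cong K_4$, every three of the vertices $u_1,u_2,u_3,u_4$ induce a $K_3$. Hence $H_0+uv$ contains $K_3\cup P_k$ \emph{whenever} $H_0+uv$ contains a $P_k$ meeting at most one of $u_1,u_2,u_3,u_4$, because the other three of these vertices then give a triangle disjoint from that path. For freeness I would argue contrapositively: the only triangles of $H_0$ lie inside $Q_1'$, so a copy of $K_3\cup P_k$ would force a $P_k$ avoiding three of the $u_i$; deleting those three $u_i$ separates the four pendant copies of $T^1_k$ from each other and from $Q_2,\dots,Q_m$, so such a $P_k$ would sit inside a single $\{K_3,P_k\}$-saturated tree, contradicting that $T^1_k$ and each $Q_j$ are $P_k$-free by Lemmas \ref{pksaturated} and \ref{saturatedtree}. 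Equivalently, no $P_k$ of $H_0$ meets at most one $u_i$, so $H_0$ is $(K_3\cup P_k)$-free.

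For saturation I would split on the location of a non-edge $uv$. If $u,v$ lie in a common tree — one pendant copy $Q_i''$ or some $Q_j$ with $j\ge2$ — then Lemmas \ref{pksaturated} and \ref{saturatedtree} say $H_0+uv$ acquires a $P_k$ or a $K_3$ inside that tree. A created $P_k$ stays in one tree and so meets at most one $u_i$, and we are done; a created $K_3$ (the distance-two case) is instead paired with the path obtained by joining longest paths into two of the \emph{other} pendant trees through their existing $K_4$-edge, which has at least $k$ vertices since $\diam(T^1_k)=k-2$ and is vertex-disjoint from the triangle. If $u,v$ lie in different trees with at most one endpoint in $Q_1$, I would take the joining path $L(u)\,u\,v\,L(v)$ as in Lemma \ref{G0saturated}: each of $L(u),L(v)$ has at least $\lceil(k-2)/2\rceil+1$ vertices, so the path has at least $k$ vertices, and it lies in at most one pendant tree and hence meets at most one $u_i$.

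The genuinely delicate case, and the main obstacle, is when $u$ and $v$ lie in two \emph{different} pendant trees $Q_i''$ and $Q_{i'}''$ of $Q_1$: the naive joining path can pass through both attachment vertices $u_i$ and $u_{i'}$, leaving only two of the $u_\bullet$ free, too few for a triangle. Here I would route the $P_k$ so that it crosses the attachment vertex on one side (gaining reach across that whole tree) while bypassing the attachment vertex on the other side, so that the resulting path meets exactly one $u_\bullet$ and the remaining three form the triangle. Showing that at least one of the two symmetric routings attains length $\ge k$ is a short layer-by-layer check inside $T^1_k$, in the spirit of the proof of Lemma \ref{pksaturated}, and uses that $T^1_k$ carries several leaves at maximum depth, so that avoiding one attachment vertex costs at most one vertex of reach; this length bookkeeping is the only subtle point.

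Finally I would count edges through $e=|V|-c+r$, where $c$ is the number of components and $r$ the cyclomatic number. The only independent cycles come from $Q_1'\cong K_4$, so $r=3$; and since $Q_1$ absorbs four blocks of order $a^1_k$ while every other component accounts for a single block, one gets $c=\lfloor n/a^1_k\rfloor-3$. Therefore $e(H_0)=n-(\lfloor n/a^1_k\rfloor-3)+3=n-\lfloor n/a^1_k\rfloor+6$, which by Theorem \ref{saturationnumber2} equals $6+\sat(n,\{K_3,P_k\})$, completing the proof.
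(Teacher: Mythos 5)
Your proposal follows essentially the same route as the paper's proof: the same case split on the location of the non-edge $uv$ (different components; a common tree component; a common pendant copy; two different pendant copies), the same mechanism of extracting the triangle as $Q_1'$ minus the at most one vertex of $\{u_1,u_2,u_3,u_4\}$ met by the created path, and the same use of Lemmas \ref{pksaturated} and \ref{saturatedtree} inside tree components. In two respects you are in fact more complete than the paper, which disposes of freeness in one sentence and never computes $e(H_0)$ at all: your leaf-deletion argument for $(K_3\cup P_k)$-freeness is correct, and so is your count $e(H_0)=n-c+r$ with $r=3$ and $c=\lfloor n/a^1_k\rfloor-3$.

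The one genuine soft spot is exactly the case you flag, $u\in Q_i''$ and $v\in Q_{i'}''$, and there your bookkeeping as stated does not close for even $k$: each of $L(u),L(v)$ is only guaranteed $\lceil k/2\rceil$ vertices, so if avoiding an attachment vertex ``costs one vertex of reach,'' the rerouted path is only guaranteed $2\lceil k/2\rceil-1=k-1$ vertices when $k$ is even, and the promised layer-by-layer check is never carried out. The check can be replaced by a one-line argument: $u_i$ is a leaf of $Q_i''$, and deleting a leaf from a tree lowers its diameter by at most one, so $\diam(Q_i''-u_i)\ge k-3$; hence every vertex of $Q_i''-u_i$ starts a path of order at least $\lceil(k-3)/2\rceil+1=\lceil(k-1)/2\rceil$ avoiding $u_i$, which concatenated through $uv$ with a longest path from $v$ in $Q_{i'}''$ (order at least $\lceil k/2\rceil$, permitted to use $u_{i'}$) gives order at least $\lceil(k-1)/2\rceil+\lceil k/2\rceil=k$ for both parities, with $Q_1'-u_{i'}$ supplying the triangle since $u_i$ is off the path. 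To be fair, the paper is no more rigorous at this very point: it simply asserts that $P_k^{uv}$ ``goes through at most one of $u_i$ and $u_j$,'' a choice that requires the same justification. So your approach is the paper's approach; the only substantive difference is that the crucial quantitative step is left as an unproven claim whose sketched estimate is one vertex short, though the statement itself is true and repairable as above.
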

\begin{proof}
		From the construction of $H_0$, we can observe that $H_0$ is $( K_3\cup P_k )$-free. For any edge $uv\notin E(H_0)$, we will show that $H_0+uv$ contains a copy of $K_3\cup P_k$, denoted by $K_{3}^{uv}$ and $P_{k}^{uv}$. Then, we can discuss it in two cases as follows.

        {\bf Case 1}
        $u$ and $v$ belong to the two different components.

    Observe that $uv\in E(P_{k}^{uv})$. Assume that one of 
 $u$ and $v$ is belonging to $V(Q_1)$, say $u\in V(Q_1)$. So we without loss of generality assume that $u\in V(Q''_{1})$ and $v\in V(Q_{i})$ with $i\ge 2$. It follows that $Q'_1-u$ forms $K_{3}^{uv}$ and $Q''_{1}\cup Q_{i}+uv$ contains $E(P_{k}^{uv})$. The remaining case is that $u$ and 
 $v$ do not belong to $V(Q_1)$. So there are two components of $H_0$, say $Q_i$ and $Q_j$ with $i\neq j$ and $i,j\ge 2$, such that $Q_i\cup Q_j+uv$ contains $E(P_{k}^{uv})$. In addition, $K_{3}^{uv}$ is a subgraph of $Q'_1$.

    {\bf Case 2}
    $u$ and $v$ belong to the same component.

    {\bf Subcase 2.1}
$u,v\in V(Q_i)$, for $i\in \{2,3,\ldots,m\}$.

    It is clear that $Q'_1$ contains $K_{3}^{uv}$ if $uv\in E(P_{k}^{uv})$. Hence, we assume $uv\in E(K_{3}^{uv})$.  It is not difficult to check that  $P_{k}^{uv}$ is contained in $Q_1$.

    {\bf Subcase 2.2}
    $u,v\in V(Q_1)$.

We first assume that $u$ and $v$ belong to the same copy of $T^1_k$.
 Without loss of generality, we suppose $u,v\in V(Q''_1)$. If $uv\in E(P_{k}^{uv})$, then $Q'_1-u_1$  forms $K_{3}^{uv}$. If $uv\in E(K_{3}^{uv})$, then $Q''_2\cup Q''_3+u_2u_3$. Obviously contains $P_{k}^{uv}$. We thus  assume that 
  $u\in V(Q''_i)$ and $u\in V(Q''_j)$ with $i\ne j$.
  If $uv\in E(K_{3}^{uv})$, then $Q_1-Q''_i- Q''_j$ contains  $P_{k}^{uv}$. For the case $uv\in E(P_{k}^{uv})$, we observe that $ Q''_i\cup Q''_j+uv$ creates $P_{k}^{uv}$, which infers that $P_{k}^{uv}$ goes through at most one of $u_i$ and $u_j$. Hence, we can assume $u_i\not\in V(P_{k}^{uv})$. Then $Q'_1-u_j$ forms $K_{3}^{uv}$.	
\end{proof}%\vspace{-2mm}
\textbf{Proof of Theorem \ref{k3vpk}:} For convenience, we suppose that $H$ be a minimum  $(K_3\cup P_k)$-saturated graph. From Lemma \ref{H0}, we have that $e(H)\le e(H_0)$. In the rest of the proof, it is sufficient to show that $e(H)\ge 2+\sat( n,\{ K_3,P_k \})$. 
If $H$ is $K_3$-free, then $H$ is $K_3$-saturated by the choice of $H$. In fact, $K_1\lor \overline{K_{n-1}}$ is a minimum $K_3$-saturated graph. It follows that $e(H)\ge n-1 \ge  2+\sat( n,\{ K_3,P_k \})$ holds by sufficiently large $n$. So we now assume that $H$ contains at least one triangle.
If $H$ is connected, then $e(H)\ge n-1\ge 2+\sat( n,\{ K_3,P_k\})$ holds by sufficiently large $n$. We claim that $H$ contains at least two tree components. Otherwise, $H$ contains at most one tree component, then $e(H)\ge n-1\ge 2+\sat( n,\{ K_3,P_k \} )$ holds by sufficiently large $n$. 
Thus, we conclude that $H$ contains a copy of $K_3$ and at least two tree components. We now claim that each tree component of $H$ is $\{ K_3,P_k \}$-saturated. 
Let $T$ be an arbitrary tree component of $H$. Obviously, $T$ is $K_3$-free. Moreover, we know that $T$ is $P_k$-free, otherwise, $H$ contains a copy of $K_3\cup P_k$, a contradiction. Since $H+uv$ contains a copy of $K_3\cup P_k$ for an arbitrary edge $uv\in E(\overline{T})$. We thus deduce that $T+uv$ creates a triangle or a  path with order at least $k$ for an arbitrary edge $uv\in E(\overline{T})$. Hence, the claim is true.

 For convenience, label $Q_{1}^{*},Q_{2}^{*},\ldots,Q_{s}^{*}$ as the $s$ components of $H$, where $Q_{1}^{*}$ contains a copy of $K_3$, denoted by $K$ and the last $l(\ge 2)$ components $Q_{s-l+1}^{*},\ldots,Q_{s}^{*}$ are trees of order at least $a^1_k$ by Lemma \ref{saturatedtree}. Set $n_0=|Q_{1}^{*}|$ and $n_{0}'=\sum_{i=2}^{s-l}{|Q_{i}^{*}|}$ for short.

	Note that if $H$ contains a $K_p$ with $p\ge4$, then we have
\begin{equation}
\begin{split}
e( H )& \ge e( Q_{1}^{*} ) +e\left( \sum_{i=2}^{s-l}{Q_{i}^{*}} \right) +e\left( \sum_{i=s-l+1}^s{Q_{i}^{*}} \right) \\
%&\ge \binom{p}{2} +n_0-p+n_{0}'+n-n_0-n_{0}'-(n-( n_0-n_{0}'))/{a_{k}^{1}
&\ge \binom{p}{2} +n_0-p+n_{0}'+n-n_0-n_{0}'-\left\lfloor \frac{n-( n_0-n_{0}' )}{a_{k}^{'}} \right\rfloor 
\\
%&(p^2-3p)/2+n-(n-( n_0-n_{0}'))/a_{k}^{1}
&\ge \frac{p^2-3p}{2}+n-\left\lfloor \frac{n-( n_0-n_{0}' )}{a_{k}^{1}} \right\rfloor 
\\
%&\ge 2+n-n/a_{k}^{1}.
&\ge 2+n-\left\lfloor \frac{n}{a_{k}^{1}} \right\rfloor.
\end{split}
	\end{equation} We thus are done. Hence, we assume that the maximum clique of $H$ is $K_3$. Let $V( K ) =\{ v_1,v_2,v_3 \}$. We consider $H+v_iw$ for $i \in \{ 1,2,3 \} $ and $w\in Q_{s}^{*}$.  Hence, $H+v_iw$ contains a copy of $K_3\cup P_k$, denoted by $K_{3}^{v_iw}\cup P_{k}^{v_iw}$. In fact, $v_iw\in E( P_{k}^{v_iw} ) $.\vspace{3mm}
 
{\bf Claim 2}  $e(Q_{1}^{*})\ge n_0+2$.

 \proof
Observe that the statement holds if $Q_{1}^{*}$ contains at least three different cycles. Observe that $Q_{1}^{*}$ contains at least two cycles $K$ and $K_{3}^{v_iw}$. Hence, we can assume that 
$Q^{*}_{1}$ contains exactly two distinguished triangles, where one is the $K$. It leads to that all these triangles, such as $K_{3}^{v_iw}$, coincide. Hence, $Q_{1}^{*}$ exactly contains two triangles one $K$ and the other $K_{3}^{v_iw}$ such that $V(K_{3}^{v_iw})\cap V(K)=\emptyset$. (If not, we are done.) For convenience, set
 $V(K_{3}^{v_iw}) =\{ v_4,v_5,v_6 \}$.

Next, we assume that there are at least two paths connecting $K_{3}^{v_iw}$ and $K$. Otherwise, 
there is exactly one path connecting $K_{3}^{v_iw}$ and $K$, denoted by $P_{\Delta}$.  Without loss of generality, assume that its two ends are vertices $v_1$ and $v_4$.
We now consider  $H+v_2v_5$. From our assumption,   it contains a copy of $K_3\cup P_k$, denoted by $K_{3}^{v_2v_5}\cup P_{k}^{v_2v_5}$,  and then $v_2v_5\in E( K_{3}^{v_2v_5} )$ (If not, then $v_2v_5\in E( P_{k}^{v_2v_5} )$.), hence, there exists the third triangle different with $K_{3}^{v_iw}$ and $K$, a contradiction.) It results that there are two paths  connecting $K_{3}^{v_iw}$ and $K$ one $P_{\Delta}$ and another going through $v_2$ and $v_5$. We also get a contradiction. Consequently, for the case we deduce that $e(Q_1^{*})\ge n_0+2$ as required.\qed

By Claim 2, we have
\begin{equation*}
		\begin{split}
e( H )&\ge e( Q_{1}^{*} ) +e\left( \sum_{i=2}^{s-l}{Q_{i}^{*}} \right) +e\left( \sum_{i=s-l+1}^s{Q_{i}^{*}} \right) 
\\&
\ge n_0+2+n_{0}'+n-n_0-n_{0}'-\left\lfloor \frac{n-( n_0-n_{0}') }{a_{k}^{1}} \right\rfloor 
\\&
\ge 2+n-\left\lfloor \frac{n-( n_0-n_{0}' )}{a_{k}^{1}} \right\rfloor 
\\&
\ge 2+n-\left\lfloor \frac{n}{a_{k}^{1}} 
\right\rfloor.
\end{split}
\end{equation*}
 We thus are done.\qed

\section{The proof of Theorem \ref{saturationjoin}}
In this section, we will research saturation number of a join of two graphs. More accurately,  
we show that saturation number of the join of an isolated vertex and a linear forest. Recall that we use $K_1\lor F$ to denote the join, where $F$ is a linear forest without isolated vertices.  Before presenting the proof of Theorem \ref{saturationjoin}, we need some preliminary conclusions. 

Bollob\'{a}s \cite{B78} proposed the following  lemma on the minimum size of $2$-connected graphs, which will play a key role in the proof of Theorem \ref{saturationjoin}.

%In 1986, Tuza at al. \cite{KT86} gave a lemma about the $F'$-saturated graph with $F'$ has a center $v^*$.

%\begin{lemma}\cite{KT86}
%	Let $F=F'\setminus \left\{v^*\right\}$ and suppose that some vertex $v^*\in V(G)$ as degree $d(v^*)=n-1$. Then $G$ is $F'$-saturated if and only if $G\setminus{v^*}$ is $F$-saturated.
%\end{lemma} 

%For a fixed graph $F$, let $v^*$ be the vertex of $K_1$ and $F'=k_1\lor F$, then we call $F'$ is obtained from $F$ by adding a center $v^*$. Alex Cameron and Puleo \cite{CP22} believe that the upper bound about $sat(n,F')$ is the same spirit as the above lemma.

%\begin{lemma}\cite{CP22}
%	If $F'$ is obtained from $F$ by adding a  center $v^*$, then for all $n\ge|V(F')|$, we have $sat(n,F')\le(n-1)+sat(n-1,F)$. 
%\end{lemma}

 %Bollob\'{a}s \cite{B78} proved a lower bound of $2$-connected graph $G$ of order $n$.

\begin{lemma}\label{le4.1}\emph{(\cite{B78})}
	Let $G$ be  a $2$-connected graph of order $n$ with $diam(G)=2$. Then $e(G)\ge 2n-5$.
\end{lemma}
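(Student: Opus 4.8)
The plan is to root the edge count at a vertex of minimum degree and let the diameter-$2$ hypothesis inflate the degrees of its neighbours. Write $\delta=\delta(G)$; since $G$ is $2$-connected we have $\delta\ge 2$. The cheap observation is that if $\delta\ge 4$, then $2e(G)=\sum_{x}\deg(x)\ge 4n$, so $e(G)\ge 2n>2n-5$ and we are done. Hence it suffices to treat $\delta\in\{2,3\}$.

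First I would dispose of the case $\delta=3$. Fix $v$ with $\deg(v)=3$, put $A=N(v)$ and $B=V(G)\setminus N[v]$, so $|A|=3$ and $|B|=n-4$. Because $\diam(G)=2$, every vertex of $B$ lies at distance $2$ from $v$ and hence has a neighbour in $A$, giving $e(A,B)\ge|B|=n-4$. Consequently $\sum_{u\in A}\deg(u)=3+2e(A)+e(A,B)\ge n-1$, while $\sum_{w\in B}\deg(w)\ge 3|B|=3(n-4)$ by minimality of $\delta$. Adding $\deg(v)=3$ yields $2e(G)\ge 3+(n-1)+3(n-4)=4n-10$, i.e. $e(G)\ge 2n-5$, exactly as wanted. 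The same bookkeeping for $\delta=2$ produces only $2e(G)\ge 3n-5$, which is too weak for large $n$, so the degree-$2$ case is the real content of the lemma.

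For $\delta=2$ I would start from a degree-$2$ vertex $v$ with $N(v)=\{a,b\}$. Diameter $2$ forces $\{a,b\}$ to dominate $V(G)\setminus\{v\}$, so I partition the remaining $n-2$ vertices into $X=N(a)\setminus N[b]$, $Y=N(b)\setminus N[a]$ and $Z=N(a)\cap N(b)$ (note $v\in Z$). Counting the edges at $a$ and $b$ gives about $|X|+|Y|+2|Z|$ edges, and each vertex of $X\cup Y$ carries a further neighbour (forced both by $\delta\ge 2$ and, when $a\not\sim b$, by the requirement that it stay at distance $2$ from $b$, resp.\ $a$). As observed above, however, this local accounting recovers only the insufficient bound $\tfrac{3n-5}{2}$.

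The extra leverage, and the main obstacle, is the requirement that \emph{every} pair in $X\times Y$ be at distance at most $2$. If both $X$ and $Y$ were large, these $|X|\,|Y|$ demands would force either a quadratic number of $X$--$Y$ edges or a vertex of $Z$ adjacent to almost all of $X\cup Y$ (hence of very large degree), in either case producing far more than $2n-5$ edges. Thus in an economical graph one of $X,Y$ is small and $Z$ must carry the connectivity, and a refined count along these lines should give $e(G)\ge 2n-5$. An equivalent route is to examine the family of dominating pairs $\{N(w):\deg(w)=2\}$: for $n$ large these $2$-sets pairwise meet (two genuinely disjoint dominating pairs would leave a degree-$2$ vertex undominated), so they form a star or a triangle, and one treats the two configurations separately. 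Pinning down this case analysis, in particular controlling how many degree-$2$ vertices can coexist and bounding the degrees they force on the few ``hub'' vertices, is the technical heart of the argument; the preceding reductions are routine.
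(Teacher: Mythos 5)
The paper never proves this lemma: it is quoted verbatim from Bollob\'as's \emph{Extremal Graph Theory} and used as a black box, so your attempt has to be judged on whether it is a complete proof on its own. Your reductions are fine as far as they go: $\delta\ge 4$ gives $e(G)\ge 2n$ immediately, and your double count for $\delta=3$ (with $A=N(v)$, $B=V(G)\setminus N[v]$, $e(A,B)\ge|B|$ from $\diam(G)=2$, hence $2e(G)\ge 3+(n-1)+3(n-4)=4n-10$) is correct. But you then state explicitly that the $\delta=2$ case is ``the technical heart'' and leave it as a sketch; that case is the entire content of the lemma, so what you have is a genuine gap, not a proof.

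Moreover, the heuristics you offer for $\delta=2$ cannot be tightened in the way you suggest. The bound $2n-5$ is sharp: besides $C_5$, the graph obtained from $C_5$ by replacing one vertex with an independent set of $n-4$ vertices, each joined to the two neighbours of the replaced vertex, is $2$-connected, has diameter $2$, and has exactly $2n-5$ edges. In your notation this extremal graph has $X$ and $Y$ of size $1$ and a large set $Z$ of degree-$2$ vertices --- precisely the ``hub'' configuration that you claim would produce ``far more than $2n-5$ edges.'' Since the inequality is attained with equality by such graphs, no crude trade-off between $|X|\,|Y|$ demands and hub degrees can close the argument; a genuinely tight case analysis is required. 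Your alternative route is also shaky as stated: two degree-$2$ vertices $u,w$ with disjoint neighbourhood pairs $N(u)=\{a,b\}$, $N(w)=\{c,d\}$ do not immediately contradict $\diam(G)=2$, because $u$ and $w$ may simply be adjacent (i.e., $u\in\{c,d\}$ and $w\in\{a,b\}$), so the claim that the pairs ``form a star or a triangle'' needs its own proof. Until the $\delta=2$ case is carried out in full, the proposal does not establish the lemma.
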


 We now restate two properties of saturation number with two graphs $F'$ and $G$ for which $G$ and $F'$ have  a center vertex $v^{*}$ and $v^*_0$, respectively. K\'{a}szonyi and Tuza \cite{KT86} observed the following  property.

\begin{lemma}\label{le2.4}\emph{(\cite{KT86}) }
	Let $F=F'\setminus \{v^*\}$.  Then $G$ is $F'$-saturated if and only if $G\setminus\{v^*\}$ is $F$-saturated.
\end{lemma}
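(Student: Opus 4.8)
The plan is to exploit the fact that a \emph{center} (universal) vertex can be freely deleted from or adjoined to a graph without disturbing the rest of its structure. Write $H=G\setminus\{v^*\}$; since $d_G(v^*)=n-1$, every other vertex of $G$ is adjacent to $v^*$, so $G=K_1\lor H$ with the $K_1$ being $v^*$. Likewise, writing $v^*_0$ for the center vertex of $F'$ and $F=F'\setminus\{v^*_0\}$, we have $F'=K_1\lor F$. The first basic observation I would record is that the non-edges of $G$ and of $H$ coincide: no non-edge of $G$ can be incident with the universal vertex $v^*$, so $E(\overline{G})=E(\overline{H})$. This lets me test the saturation condition for $G$ against $F'$ on exactly the \emph{same} candidate edges as the saturation condition for $H$ against $F$.

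The easy half of each equivalence is the \emph{join} step. If $H$ (or $H+e$) contains a copy of $F$, then, because $v^*$ is adjacent to all of $V(H)$, attaching $v^*$ to that copy produces a copy of $K_1\lor F=F'$ in $G$ (or $G+e$). I would use this in two ways: to deduce that $G$ fails to be $F'$-free whenever $H$ fails to be $F$-free, and to transfer the saturation property from $H$ to $G$ (given a non-edge $e=uv$, which by the first paragraph is also a non-edge of $H$, one has $H+uv\supseteq F$, and adjoining $v^*$ gives $G+uv\supseteq F'$).

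The delicate half is the \emph{deletion} step, and I expect this to be the only real obstacle. Suppose $G$ (or $G+e$) contains a copy of $F'$. If the copy avoids $v^*$ it already lies in $H$ (or $H+e$) and contains $F$, so we are done; the problem is when the copy uses $v^*$ as the image of some \emph{non-central} vertex $x\neq v^*_0$, for then deleting $v^*$ need not leave a copy of $F$. The key is a re-embedding argument showing that $v^*$ may always be taken to realize the center $v^*_0$. Concretely, if $\varphi\colon F'\to G$ is the given embedding with $\varphi(x)=v^*$ and $\varphi(v^*_0)=w\neq v^*$, I swap the two images, setting $\varphi'(v^*_0)=v^*$ and $\varphi'(x)=w$ and fixing all other vertices. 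Every adjacency is preserved: $v^*_0$ is universal in $F'$ and $v^*$ is universal in $G$, while the former center image $w$ was adjacent to everything used by $\varphi$, so $w$ can safely absorb the single vertex $x$. Thus $\varphi'$ is again an embedding of $F'$ with the center now mapped to $v^*$, and deleting $v^*$ from it leaves a copy of $F=F'\setminus\{v^*_0\}$ inside $H$ (respectively $H+e$).

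With this re-embedding step in hand the proof closes quickly. For the forward direction, assume $G$ is $F'$-saturated: $F'$-freeness of $G$ forces $F$-freeness of $H$ (else the join step would produce $F'$ in $G$), and for any non-edge $e$ of $H$, which is a non-edge of $G$, the copy of $F'$ in $G+e$ becomes, after re-embedding and deleting $v^*$, a copy of $F$ in $H+e$; hence $H$ is $F$-saturated. The reverse direction is symmetric: $F$-freeness of $H$ yields $F'$-freeness of $G$ via the re-embedding-and-delete step, and saturation of $H$ yields saturation of $G$ via the join step of the second paragraph. Combining the two directions gives the stated equivalence.
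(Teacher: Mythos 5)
Your proof is correct and complete. Note that the paper itself gives no argument for this lemma: it is stated as a known observation cited from K\'{a}szonyi and Tuza, so there is no in-paper proof to compare against. Your argument is the standard one, and you correctly identified the one genuinely delicate point, namely that a copy of $F'$ in $G$ (or $G+e$) may use the universal vertex $v^*$ as the image of a non-central vertex $x$ of $F'$; the swap $\varphi'(v^*_0)=v^*$, $\varphi'(x)=w=\varphi(v^*_0)$ is verified properly, since for every edge $xy\in E(F')$ with $y\neq v^*_0$ the needed adjacency $w\varphi(y)$ is an edge precisely because $v^*_0y\in E(F')$ and $\varphi$ is an embedding, while all adjacencies at $\varphi'(v^*_0)=v^*$ hold by universality of $v^*$ (which is unaffected by adding $e$, as $v^*$ lies on no non-edge). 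Together with your observation that $E(\overline{G})=E(\overline{H})$, which lets the two saturation conditions be tested on the same set of candidate edges, both directions close as you describe.
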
 

\noindent Conversely, for a fixed graph $F$, let $F'=K_1\lor F$, set $v^*$ as the specified vertex $K_1$. Clearly,  $F'$ has a center vertex $v^*$. Cameron and Puleo \cite{CP22} believe that the upper bound about $\sat(n,F')$ is the same spirit as the above lemma.

\begin{lemma}\label{le2.5}\emph{(\cite{CP22})}
	If $F'$ is obtained from $F$ by adding a  center $v^*$, then for all $n\ge|V(F')|$, we have $\sat(n,F')\le(n-1)+\sat(n-1,F)$. 
\end{lemma}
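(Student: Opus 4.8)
Since the saturation number is a minimum, the statement is an upper bound, and the cleanest way to establish it is to produce one explicit $F'$-saturated graph $G$ on $n$ vertices with $e(G)=(n-1)+\sat(n-1,F)$. The construction is dictated by the remark preceding the lemma: let $H$ be a minimum $F$-saturated graph on $n-1$ vertices, so that $e(H)=\sat(n-1,F)$, and set $G=K_1\lor H$, attaching a single new vertex $v^*$ adjacent to all of $V(H)$. Then $v^*$ is a center vertex of $G$, and $e(G)=e(H)+(n-1)=(n-1)+\sat(n-1,F)$, which is exactly the target quantity.

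It then remains to check that $G$ is $F'$-saturated, and here I would appeal to Lemma \ref{le2.4}. Writing $v_0^*$ for the center of $F'=K_1\lor F$, we have $F=F'\setminus\{v_0^*\}$, while $G$ has center $v^*$ with $G\setminus\{v^*\}=H$; thus Lemma \ref{le2.4} asserts that $G$ is $F'$-saturated precisely when $H$ is $F$-saturated. Since $H$ was chosen $F$-saturated, $G$ is $F'$-saturated, whence $\sat(n,F')\le e(G)=(n-1)+\sat(n-1,F)$. If one prefers a self-contained verification, the easy half is maximality: every non-edge $xy$ of $G$ lies inside $H$ (as $v^*$ dominates), and $F\subseteq H+xy$ forces $F'=K_1\lor F\subseteq K_1\lor(H+xy)=G+xy$.

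The step I expect to carry the real weight is the $F'$-freeness of $G$, which is exactly the delicate content packaged inside Lemma \ref{le2.4}. A purported copy of $F'$ in $G$ need not use $v^*$ as its center vertex: because $v^*$ is adjacent to everything, it could instead play the role of an ordinary degree-at-most-two vertex of the linear forest $F$. The resolution is to observe that the center $c$ of such a copy must then lie in $H$ and be adjacent to the whole $F$-part; replacing $v^*$ by $c$ (whose edges to the one or two $F$-neighbors of $v^*$ are already present in $H$, since those neighbors lie in $V(H)$) produces a copy of $F$ entirely inside $H$, contradicting $F$-freeness of $H$. The only other possibility, $v^*$ serving as the center, puts the $F$-part directly in $H$ and is excluded the same way. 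With this rerouting argument in hand the lemma is immediate, so in a fully self-contained write-up I would localize all of the case analysis there rather than in the edge count or the maximality check.
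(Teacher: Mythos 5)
Your proposal is correct and matches the intended argument: the paper states this lemma without proof (citing \cite{CP22}), and your construction $G=K_1\lor H$ with $H$ a minimum $F$-saturated graph on $n-1$ vertices, certified $F'$-saturated via Lemma \ref{le2.4} (or by your direct maximality-plus-freeness check), is precisely the route the surrounding discussion and the extremal characterization in Theorem \ref{saturationjoin} presuppose. One small remark: in your self-contained freeness argument the appeal to ``one or two $F$-neighbors of $v^*$'' (i.e.\ $F$ being a linear forest) is unnecessary --- the center $c$ of the purported copy of $F'$ is adjacent in $H$ to the entire $F$-part, so replacing $v^*$ by $c$ yields a copy of $F$ in $H$ for arbitrary $F$, which is what the general statement requires.
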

 
 \noindent Chen et al. \cite{CFFGJM17} demonstrated that $\sat(n,F)$ for a  linear forest $F=P_{k_1}\cup P_{k_2}\cup \cdots\cup P_{k_t}$ is determined by the smallest path in the forest.
 
 \begin{lemma}\label{le2.6}\emph{(\cite{CFFGJM17})}
 	If $F=P_{k_1}\cup P_{k_2}\cup \cdots \cup P_{k_t}$ where $k_1\ge k_2\ge \cdots \ge k_t$, $q=( \sum_{i=1}^t{k_t} )-1$, then 
 	$$
 	\sat( n,F ) =\left\{ \begin{array}{l}
 		n-\lfloor \frac{n}{a_{k_t}} \rfloor +c( n ) \ if\ k\ne 4\\
 		n-\lfloor \frac{n}{2} \rfloor +c( n ) \ if\ k=4\\
 	\end{array} \right.,
 	$$
 	for some constant $c(n)$ such that $0\le c( n ) \le \tbinom{q}{2} -q+\lceil \frac{q}{a_{k_t}} \rceil$.
 	
 \end{lemma}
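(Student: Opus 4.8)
Write $r=k_t$ for the length of the shortest path in $F$ and $q=\sum_{i=1}^{t}k_i-1=|V(F)|-1$. The plan is to prove the upper and lower bounds separately, the gap between them being exactly the bounded term $c(n)$.

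\emph{Upper bound (construction).} First I would build an $F$-saturated graph of the stated size. For $r\neq 4$, take a clique $K_q$ on a vertex set $A$ together with a vertex-disjoint union of copies of the minimum $P_r$-saturated tree $T_r$ covering the remaining $n-q$ vertices, using a single smaller $P_r$-saturated tree to absorb the residue of $n-q$ modulo $a_r$. Every component other than $K_q$ is $P_r$-free, so no path of $F$ (each having at least $r$ vertices) can sit outside $A$; since $|A|=q<|V(F)|$, the graph is $F$-free. For saturation, a non-edge lying inside one tree component, or joining two tree components, creates a copy of $P_r$ there by Lemma~\ref{Pk} and the two-path argument of Lemma~\ref{pksaturated}, while $K_q$ hosts the disjoint union $P_{k_1}\cup\cdots\cup P_{k_{t-1}}$ on $q+1-r\le q$ vertices; a non-edge from $A$ to a tree vertex $v$ makes $K_q\cup\{v\}$ a clique with one pendant on $|V(F)|$ vertices, which splits into the paths of $F$ with $v$ as an endpoint. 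Counting edges gives $\binom{q}{2}$ from the clique and $(n-q)-\lceil (n-q)/a_r\rceil$ from the forest; comparing the two floor terms rewrites this as $n-\lfloor n/a_r\rfloor+c(n)$ with $0\le c(n)\le\binom{q}{2}-q+\lceil q/a_r\rceil$. When $r=4$, the forest of copies of $T_4$ is replaced by a near-perfect matching, the genuine minimum $P_4$-saturated graph, which is exactly why the main term switches to $n-\lfloor n/2\rfloor$.

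\emph{Lower bound.} The heart of the argument is $\sat(n,F)\ge n-\lfloor n/a_r\rfloor$ (with $a_r$ replaced by $2$ when $r=4$), i.e.\ $c(n)\ge 0$. Let $G$ be $F$-saturated and partition its components into those containing a cycle and the tree components; cyclic components have at least as many edges as vertices, so the task reduces to the trees. The key claim is that, outside a bounded ``donor'' part $D$ of $G$ that carries the longer paths $P_{k_1},\dots,P_{k_{t-1}}$, every tree component $T$ is itself $P_r$-saturated: for a non-edge $uv$ inside such a $T$ the copy of $F$ produced in $G+uv$ must route its shortest path through $uv$, the remaining $t-1$ paths being supplied disjointly by $D$, which forces $T+uv\supseteq P_r$. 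By Lemma~\ref{Pk} each such tree then contains $T_r$ and so has at least $a_r$ vertices, and a forest on $m$ vertices whose components all have order $\ge a_r$ has at least $m-\lfloor m/a_r\rfloor$ edges; summing over all components yields the bound.

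\emph{Main obstacle.} The delicate point is the structural control of the donor part $D$. One must show $D$ is bounded in size independently of $n$, and---crucially---that $D$ cannot be a sparse tree: because all tree components are $P_r$-free, adding an edge \emph{inside} $D$ cannot borrow a $P_r$ from elsewhere, so $D$ must by itself be rich enough (essentially clique-like) to complete $F$ after any internal addition. It is exactly this density of the donor that prevents the aggregate edge count from dipping below $n-\lfloor n/a_r\rfloor$ and forces $c(n)\ge 0$; ruling out configurations in which a long $F$-path threads through many small components, and handling $r=4$ where $P_4$-saturation is realized by matchings rather than by $T_4$, are the technical crux.
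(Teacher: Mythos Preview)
The paper does not prove this lemma; it is quoted from \cite{CFFGJM17} and used only as a black box in the final displayed inequality of the proof of Theorem~\ref{saturationjoin}, where all that is actually needed is the crude consequence $\sat(n-1,F)<n-4$ for large $n$. There is therefore no proof in the present paper to compare your attempt against.

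On the merits of your sketch: the upper-bound construction ($K_q$ together with a $P_{k_t}$-saturated forest, or a matching when $k_t=4$) is the standard one and is essentially correct. Your lower bound, however, has a genuine gap beyond what you flag as the ``main obstacle''. You assert that a tree component $T$ outside the donor $D$ is $P_{k_t}$-saturated and then invoke Lemma~\ref{Pk}, but Lemma~\ref{Pk} requires $T$ to be $P_{k_t}$-\emph{free}, and nothing you have said rules out $T$ containing a copy of $P_{k_t}$: the graph $G$ is only $F$-free, and a single $P_{k_t}$ sitting in $T$ is harmless as long as the remaining $t-1$ paths of $F$ cannot be found disjointly elsewhere. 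Your deduction that $T+uv\supseteq P_{k_t}$ is fine (the added edge lies on some $P_{k_i}\supseteq P_{k_t}$), but that alone does not make $T$ a $P_{k_t}$-saturated tree, so the $|T|\ge a_{k_t}$ step is unjustified. The argument in \cite{CFFGJM17} handles the structure of the tree components more carefully than your donor heuristic allows.
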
 
{\bf Proof of Theorem \ref{saturationjoin}:} Let $G$ be a $(K_1\lor F)$-saturated graph of order $n$ with the minimum number of edges. We first show the following property of $G$.

\vspace{2mm}
{\bf Claim 1}
	$diam(G)=2$.

\begin{proof}
	We observe that $G$ is not a complete graph because $G$ does not contain a copy of $K_1\lor F$. Hence, $diam(G)\ge 2$. It suffices to show that for any pair of nonadjacent vertices $u,v\in V(G)$, $d_G(u,v)\le 2$.  Note that $G+uv$ creates a copy of $K_1\lor F$ containing $uv$ according to our assumption. If $u$  and $v$ both belong to the copy of $F$ in $G+uv$, then they have a common neighbor in $G$. If one of $u$ and $v$ belongs to the copy of $F$, say $v$, then $u$ is regarded as  $K_1$ in the  copy of $K_1\lor F$. It follows that $u$ and $v$ have a common neighbor.   So we get $diam(G)\le 2$.
\end{proof}

We now consider the case $\varDelta ( G) =n-1$. Evidently, $G$ contains a center vertex, say $v^*$. Let $G'=G-v^*$. Observe that $G'$ is an $F$-saturated by Lemma \ref{le2.4}. It follows from the minimality of saturated graphs that
\begin{equation}\label{GG}
	\sat(n,K_1\lor F)=e(G)=n-1+e(G')\ge (n-1)+\sat(n-1,F).
\end{equation}
On the other hand,  Lemma \ref{le2.5} infers that the opposite of (3) also holds. Together with (\ref{GG}),  the proof is done.

  We next consider the remaining case $\varDelta (G) \le n-2$. With  the condition of maximum degree we first show that $G$ has the following property.
  
\vspace{2mm}
{\bf Claim 2}
	$G$ is $2$-connected.

\begin{proof}
	Assume to the contrary that $G$ is a $1$-connected graph. So there is some vertex of $G$, say $v$,
 such that  $G-v$ is disconnected.  Label the components of $G-v$ as $C_1,C_2,\ldots,C_t$, where $t\ge 2$. By Claim 2, $d_G(u_i,u_j)=2$ with $u_i \in C_i$ and $u_j \in C_j$ for $1\le i<j\le t$. Then $v\in N(u_i)\cap N(u_j)$. It can be inferred that  $d(v)=n-1$, a contradiction. Therefore, the claim is true.
\end{proof}

By Claims 1 and 2 and    Lemmas \ref{le4.1}, \ref{le2.4}, \ref{le2.5} and \ref{le2.6},  we have 
\begin{equation*}
(n-1)+\sat(n-1,F)\ge e(G)\ge 2n-5>(n-1)+\sat(n-1,F),
\end{equation*} a contradiction.

We now show the family of all extremal graphs. Suppose that $G$ is a minimum $K_1\lor F$-saturated graph. We now conclude that $G$ has a center vertex, say $v_0$.  Thus, we deduce that, from the argument above, $G$ is a minimum $K_1\lor F$-saturated graph if and only if $G\setminus\{v_0\}$ is $F$-saturated. Hence, all extremal graphs are characterized. In other words,  $H$ is an extremal graph of $F$ if and only if $K_1\lor H$ is an extremal graph of $K_1\lor F$.

Therefore, we complete the proof.\qed

\section{Concluding remarks}
In the paper, motivated by the fact that $T_k$ is $\{K_3,P_k\}$-saturated, we first study the minimum $\{K_3,P_k\}$-saturated tree. Based on it, we  determine $\sat(n,\{K_3,P_k\})$ with $k\ge 10$. Furthermore, $\sat(n,\{K_3,P_k\})$ can be used to bound the saturation number of $K_3\cup P_k$ as proposed in Relation (\ref{twobounds}).
Although we do not obtain the exact value of $\sat(n, K_3\cup P_k)$, we firmly believe that the upper bound in Relation (\ref{twobounds}) is indeed its saturation number. Hence, we pose the following problem.
\begin{problem}
For $k\ge 10$ is $\sat(n,K_3\cup P_k)=6+\sat(n,\{K_3,P_k\})$ true?
\end{problem}
\noindent In addition, we show the saturation number of the join of an isolated vertex and a linear forest without isolated vertices, which confirms Problem \ref{p1} for the specified graph. 

Note that  Lemma \ref{saturatedtree} obtains the minimum $\{K_3,P_k\}$-saturated tree for $k\ge 10$. 
We conclude this section by discussing the property for  small $k\le 9$. 

\vspace{2mm}
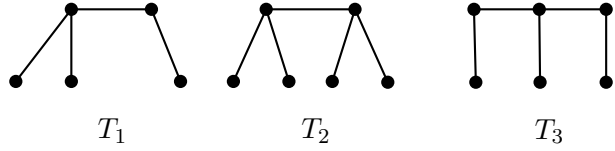
\begin{figure}[h]\label{5}
\begin{center}
\begin{picture}(223.3,49)\linethickness{0.8pt}
\Line(21,49)(0,21.7)
\Line(21,49)(21,21.7)
\Line(21,49)(51.3,49)
\put(31,.1){$T_1$}
\Line(51.3,49)(62.3,21.7)
\put(108,.1){$T_2$}
\Line(94.7,49)(103.3,21.7)
\Line(94.7,49)(82.3,21.7)
\Line(94.7,49)(128.3,49)
\Line(128.3,49)(140.7,21.4)
\Line(128.3,49)(119.7,21.7)
\Line(173.3,49)(174,21.4)
\Line(173.3,49)(198,49)
\put(196.3,0){$T_3$}
\Line(198,49)(198.3,21.4)
\Line(198,49)(223.3,49)
\Line(223.3,49)(223.3,21.4)
\put(21,49){\circle*{5}}
\put(21,21.7){\circle*{5}}
\put(0,21.7){\circle*{5}}
\put(62.3,21.7){\circle*{5}}
\put(51.3,49){\circle*{5}}
\put(103.3,21.7){\circle*{5}}
\put(82.3,21.7){\circle*{5}}
\put(94.7,49){\circle*{5}}
\put(140.7,21.4){\circle*{5}}
\put(119.7,21.7){\circle*{5}}
\put(128.3,49){\circle*{5}}
\put(174,21.4){\circle*{5}}
\put(173.3,49){\circle*{5}}
\put(198.3,21.4){\circle*{5}}
\put(198,49){\circle*{5}}
\put(223.3,21.4){\circle*{5}}
\put(223.3,49){\circle*{5}}
\end{picture}
\end{center}
\vspace{-2mm}
\caption{Graphs $T_1$, $T_2$, and $T_3$.}%\label{5}
		\end{figure}
\begin{proposition}\label{saturationnumbertree2}
Let $T_i$ be the graph shown in Figure 5 for $i=1,2,3$. Suppose that $T$ is a $\{K_3,P_k\}$-saturated tree that is not a star for $5\le k\le 9$. Then $T_0\subseteq T$, where 
 \begin{equation*}
T_0\cong \left\{ \begin{array}{ll}
	T_1& \emph{if}\ k=5,\\
	T_2\ \emph{or}\ T_3& \emph{if}\ k=6,\\
	T_{k}^{0}& \emph{if}\ 7\le k\le 8,\\
	T_{9}^{1}\ \emph{or}\ T_{9}^{0}& \emph{if}\ k=9.
\end{array} \right.      
 \end{equation*}
\end{proposition}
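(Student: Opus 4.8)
The plan is to re-use the layer framework and diameter estimate developed for Lemma~\ref{saturatedtree}, and to isolate only those small-$k$ features that break the clean dichotomy there. As in that proof, I would let $T$ be a $\{K_3,P_k\}$-saturated tree that is not a star, with $\diam(T)=s$; being non-star forces $s\ge 3$, and being $P_k$-free forces $s\le k-2$. The lower bound $s\ge k-3$ is immediate here: for $k=5$ it follows from $s\ge 3=k-2$, for $k=6$ from $s\ge 3=k-3$, and for $7\le k\le 9$ one repeats verbatim the edge-addition argument (adding $v_1v_4$ to a hypothetical tree of diameter $\le k-4$ produces a path of order $\ge k-1$). Thus $s\in\{k-3,k-2\}$ throughout, and I would organize everything by these two values of $s$, exactly as in Subcases~1.1--2.2.

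For $7\le k\le 9$ the $s=k-3$ branch transfers directly: relabel the longest path with symmetric subscripts and re-establish Claims~\ref{t-2a}--\ref{t-2b} for the now-small number of layers, concluding $T^0_k\subseteq T$. The only care needed is that inductive ranges such as $3\le j\le t-4$ may be empty; when they are, the base cases obtained by adding $v_{i1}v_{(i-3)1}$ and $v_{31}v'_{11}$ already pin down the full root-path structure. For $k=9$ the $s=k-2$ branch is governed by Claims~\ref{t-3}, \ref{t}, \ref{t-1} and yields $T^1_9\subseteq T$, so the two branches together give the stated dichotomy $T^0_9$ or $T^1_9$.

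The genuinely separate work lies in the degenerate cases. For $k=5,6$ the prescribed trees collapse; indeed $T^0_6\cong P_4$ is not even $\{K_3,P_6\}$-saturated, since for $P_4=abcd$ the graph $P_4+ad$ contains neither a triangle nor a $P_6$. Hence I would enumerate directly: for $k=5$ only $s=3$ survives, and a short analysis of diameter-$3$ trees under saturation forces $T_1$; for $k=6$ the cases $s=3$ and $s=4$ are handled separately and yield $T_2$ and $T_3$. For $k=7,8$ the issue is the $s=k-2$ branch: the structure $T^1_k$ cannot be realized as specified (for $k=7$ the $1$-layer holds only $2$ vertices while $\theta(7)=3$ branch vertices are demanded, and $k=8$ is similarly obstructed), so rather than producing a $T^1_k$ I would show that every $\{K_3,P_k\}$-saturated tree of diameter $k-2$ already contains $T^0_k$, collapsing both branches to $T^0_k\subseteq T$.

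The main obstacle is exactly this last step for $k=7,8$: excluding ``thin'' diameter-$(k-2)$ saturated trees that would be smaller than $T^0_k$. As elsewhere the tool is edge-addition — since $T$ is a tree, each $T+uv$ can create only a $P_k$, not a triangle — but one must show the resulting local constraints force every inner-layer vertex to carry enough internally disjoint upward paths to embed the spider-like $T^0_k$, despite having very few layers. Checking that these constraints assemble into a genuine copy of $T^0_k$, rather than some sporadic smaller saturated tree (the fate of $P_4$ is a cautionary example), is the delicate part, and I expect it to reduce to careful bookkeeping of the two longest paths through each added edge.
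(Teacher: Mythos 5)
Your overall architecture---pinning $\diam(T)=s\in\{k-3,k-2\}$, rerunning the layer and root-path machinery of Lemma~\ref{saturatedtree} for $7\le k\le 9$, and direct enumeration for $k=5,6$---is exactly the paper's route: its proof for $7\le k\le 9$ is essentially a citation of that machinery, and its $k=5,6$ treatment is the same short diameter-$3$/diameter-$4$ edge-addition argument you sketch. Your observation that $T^0_6\cong P_4$ is not $\{K_3,P_6\}$-saturated correctly explains why those degenerate cases need the ad hoc trees $T_1,T_2,T_3$, and your $k=5,6,9$ cases match the paper.

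The genuine gap is your treatment of the $s=k-2$ branch at $k=8$. The claim that $k=8$ is ``similarly obstructed'' is false: the obstruction is real only at $k=7$, where the $(\lfloor k/2\rfloor-2)$-layer coincides with the $1$-layer, which has $k+1-2\lfloor k/2\rfloor=2$ vertices---too few to host $\theta(7)=3$ branch vertices. For $k=8$ the definition instantiates without difficulty (consistently, $a^1_8=9\cdot 2^{0}+2=11$): take a root $r$ with children $b_1,b_2,c$, where $b_1,b_2$ have degree $3$ and $c$ has degree $2$; let $b_i$ have children $d_i$ (degree $2$, with leaf child $g_i$) and a leaf $e_i$, and let $c$ have the leaf child $f$. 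This $T^1_8$ has diameter $6=k-2$, is $P_8$-free, and a nonedge-by-nonedge check shows it is $\{K_3,P_8\}$-saturated: every nonedge at distance $2$ closes a triangle, and every nonedge at distance at least $3$ creates a path on at least $8$ vertices (for instance, $T^1_8+d_1b_2$ contains the path $f\,c\,r\,b_1\,d_1\,b_2\,d_2\,g_2$). Moreover $T^0_8\not\subseteq T^1_8$: the only edges of $T^1_8$ joining two vertices of degree at least $3$ are $rb_1$ and $rb_2$, and in either case one of the two remaining neighbors of $b_i$ is the leaf $e_i$, whereas $T^0_8$ requires two adjacent degree-$3$ centers all four of whose outer neighbors have degree at least $2$. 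Hence your fallback plan---``show that every $\{K_3,P_8\}$-saturated tree of diameter $k-2$ already contains $T^0_8$''---is unprovable, since $T^1_8$ is itself such a tree omitting $T^0_8$; what the Case~2 machinery (Claims~\ref{t-3}--\ref{t-1} with $t=4$) actually yields at $k=8$ is $T^1_8\subseteq T$, i.e., the same dichotomy as at $k=9$. Note that this conflicts with the proposition's own listing of only $T^0_8$ at $k=8$, so you cannot rescue the step by appealing to the statement as written: either the $k=8$ entry must also admit $T^1_8$, or one would need to exclude diameter-$6$ saturated trees for $k=8$, which the example above shows is impossible. Your instinct that something special happens at $k=7,8$ was right only for $k=7$.
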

 \proof Note that  $T_0$ is $\{K_3,P_k\}$-saturated for $k\ge 7$ by Lemma \ref{pksaturated}. It is also true   for $k\in \{5,6\}$ by directly checking. Suppose that $T$ is a $\{K_3,P_k\}$-saturated tree.  We can assume $k\le 9$ by Lemma \ref{saturatedtree}.  Observe that $diam(T)\ge 3$ by our assumption that $T$ is not a star. Hence, $k\ge 5$. For the case $7\le k\le 9$, we directly obtain $T_0$ as claimed by the fact that $T_k^1\subseteq T$ and $T_k^0\subseteq T$ from the proof of  Lemma \ref{saturatedtree}. 
 
 So we now consider the remaining case $k\in \{5,6\}$. Recall that  $k-3\le diam(T) \le k-2$ for $k\ge 5$ by Lemma \ref{saturatedtree}. If $k=5$, then $diam(T)=3$. Let $P_4=v_1v_2v_3v_4$ be a longest path of $T$. Observe that $T+v_1v_4$ contains a copy of $P_5$, which infers that $v_2$ or $v_3$ has at least one neighbor. It follows that $T_0\subseteq T$. We now assume that $k=6$, then $diam(T)\in\{3,4\}$. If $diam(T)=3$, then $T$ has a longest path, say $P'_4=w_1w_2w_3w_4$. Observe that $T+w_1w_4$ contains a copy of $P_6$. It implies that $w_2$ and  $w_3$ have at least one neighbor, respectively. It follows that $T_0\subseteq T$. We now assume $diam(T)=4$. Hence, let $P_5=u_1u_2u_3u_4u_5$ be a longest path of $T$. Note that $T+u_1u_4$ creates a copy of $P_6$. We thus deduce that $u_3$ has a neighbor, which infers that $T_0\subseteq T$. By direct checking, $T_0$ is the unique tree with order $6$ in the case. \qed
 
In fact, we can also determine $\sat(n,\{K_3,P_k\})$ for $5\le k\le9$ by using the same way in the proof of Theorem \ref{saturationnumber2} together with Preposition \ref{saturationnumbertree2}. Due to the lack of a unified form, these cases are omitted here.

\bigskip% \vspace{5pt}

 %\noindent {\bf Declaration of competing interest}
 
%The authors declare that they have no known competing financial interests or personal relationships that could have appeared to influence the work reported in this paper.
%\vspace{5pt}

%\bigskip

\noindent {\bf Acknowledgments}
			
			%The authors would like to thank the editor and referees for their valuable suggestions which helps us to modify the presentation of the paper.
 Qing Cui was partially supported by the National Natural Science Foundation of China (Nos.
12171239 and 12271251). Erfei Yue is partially supported by ERC Advanced grant GeoScape (No. 882971). Shengjin Ji was partially supported by  Natural Science Foundation of Shandong Province, China 
(No. ZR2022MA077) and by Postgraduate Education Reform Project of Shandong Province, China (No. S-DYKC2023107).
\bigskip			
			%\vspace{2mm}

% \noindent{\bf Data availability}   
 
%No data was used for the research described in the article.        
%\noindent{\bf Conflict of interest}
			
			%The authors declare that they have no conflict interest regarding the publication of this paper.

\end{document}